\numberwithin{equation}{section}
\newcommand{\R}{\mathbb{R}}
\newcommand{\N}{\mathbb{N}}
\newcommand{\bi}{\mathbf{i}}
\newtheorem{tm}{Theorem}[section]
\newtheorem{df}{Definition}
\newtheorem{lm}{Lemma}
\newtheorem{prop}{Proposition}[section]
\newtheorem{cor}{Corollary}
\newtheorem{rk}{Remark}
\newtheorem{ap}{Assumption}
\allowdisplaybreaks \allowdisplaybreaks[4]
\begin{document} 

\title{Approximation of Invariant Measure for Damped Stochastic Nonlinear Schr\"{o}dinger Equation via an Ergodic Numerical Scheme\footnotemark[1]}
       \author{
        Chuchu Chen, Jialin Hong, and Xu Wang\footnotemark[2]\\
       {\small  Institute of Computational Mathematics and Scientific/Engineering Computing,}\\{\small Academy of Mathematics and Systems Science, Chinese Academy of Sciences, }\\
         {\small Beijing 100190, P.R.China }}
       \maketitle
       \footnotetext{\footnotemark[1]Authors are supported by National Natural Science Foundation of China (NO. 91530118, NO. 91130003, NO. 11021101 and NO. 11290142).}
        \footnotetext{\footnotemark[2]Corresponding author: wangxu@lsec.cc.ac.cn}

       \begin{abstract}
         {\rm\small In order to inherit numerically the ergodicity of the damped stochastic nonlinear Schr\"odinger equation with additive noise, we propose a fully discrete scheme, whose spatial direction is based on spectral Galerkin method and temporal direction is based on a modification of the implicit Euler scheme. We not only prove the unique ergodicity of the numerical solutions of both spatial semi-discretization and full discretization, but also present error estimations on invariant measures, which gives order $2$ in spatial direction and order ${\frac12}$ in temporal direction.}\\

\textbf{AMS subject classification: }{\rm\small37M25, 60-08, 60H35, 65C30.}\\

\textbf{Key Words: }{\rm\small}Stochastic Schr\"{o}dinger equation, numerical scheme, ergodicity, invariant measure, error estimation
\end{abstract}

\section{\textsc{\Large{I}ntroduction}}

The ergodicity of stochastic differential equations (SDEs) and stochastic partial differential equations (SPDEs) characterizes the longtime behavior of the solutions (see \cite{debussche,mattingly,daprato} and references therein), and it is natural to construct proper numerical schemes which could inherit the ergodicity.
For ergodic SDEs with bounded or global Lipschitz coefficients,  the ergodicity of several schemes were studied in \cite{talay}. It also gave an error estimation of invariant measures
\begin{equation*}
e(\phi)=\left|\int\phi(y)d\mu(y)-\int\phi(y)d\tilde{\mu}(y)\right|
\end{equation*}
via the exponential decay property of the solution of Kolmogorov equation, where $\mu$ and $\tilde{\mu}$ denote the original invariant measure and the numerical one respectively. In the local Lipschitz case, the ergodicity is inherited by specially constructed implicit discretizations (see \cite{mattingly} and references therein).
For SDEs, there are also various works related to the study of error $e(\phi)$ by assuming the ergodicity of the schemes (see \cite{abdulle} and references therein).
For SPDEs, there have also been some significant results concentrating on invariant laws, e.g., \cite{brehier} studied a semi-implicit Euler scheme in temporal direction with respect to parabolic type SPDEs with bounded nonlinearity and space-time white noise; \cite{brehier2} studied a full discretization for stochastic evolution equations with global Lipschitz nonlinearity and space-time white noise. Invariant laws of the approximations are, in general, possibly not unique.
 To our knowledge, there has been less work on constructing a fully discrete scheme to inherit the unique ergodicity of SPDEs up to now.

In this paper, we consider an initial-boundary problem of an ergodic one-dimensional damped stochastic  nonlinear Schr\"odinger equation
\begin{equation}\label{model}
\left\{
\begin{aligned}
&du=\big{(}\mathbf{i}\Delta u-\alpha u+\mathbf{i}\lambda|u|^2u\big{)}dt+Q^{\frac{1}{2}}dW\\
&u(t,0)=u(t,1)=0,~t\geq0\\
&u(0,x)=u_0(x),~x\in[0,1],
\end{aligned}
\right.
\end{equation}
where $\alpha>0,$ $\lambda=\pm1$ and the solution $u$ is a complex valued ($\mathbb{C}$-valued) random field on a probability space $(\Omega,\mathcal{F},P)$. 
The noise term involves a cylindrical Wiener process $W$ and a symmetric, positive, trace class operator $Q$ such that the noise is colored in space and white in time. The operator $Q$ is supposed to commute with Laplacian $\Delta$, and the noise has the following Karhunen-Loeve expansion
$$Q^{\frac{1}{2}}dW=\sum_{m=1}^{\infty}\sqrt{\eta_m}e_m(x)d\beta_m(t),\;\;\eta_m\in\mathbb{R^+}\;\;\text{and}\;\;\eta:=\sum_{m=1}^{\infty}\eta_m<\infty,$$
where $\{\beta_m(t)\}_{m\geq1},$ associated to a filtration $\{\mathcal{F}_t\}_{t\geq 0},$ is a family of independent and identically distributed  $\mathbb{C}$-valued Wiener processes and $\{e_m\}_{m\ge1}$ is the eigenbasis of the Dirichlet Laplacian.
This model has many applications in statistical physics and has been studied by many authors. For instance, it can describe the transmission of the signal along the fiber line with signal loss (see \cite{FK01,FK04} and references therein).
The ergodicity for (\ref{model}) with $\lambda=1$ has been studied in \cite{debussche} based on a coupling method,  Foias-Prodi type estimates and a priori estimates for a modified Hamiltonian $\mathcal{H}=\frac12\|\cdot\|_1^2-\frac14\|\cdot\|_{L^4}^4+c_0\|\cdot\|_0^6$. 
The authors showed that \eqref{model} possesses a unique invariant measure $\mu$ assuming that the noise is non-degenerate in the low modes, i.e., $\eta_m>0$, $m\le N_*$ for some sufficiently large $N_*$.
In the same procedure, one can also show the ergodicity for the cases $\lambda=0$ and $\lambda=-1$ by setting 
$\mathcal{H}=\frac12\|\cdot\|_1^2-\frac{\lambda}4\|\cdot\|_{L^4}^4+c_0\|\cdot\|_0^6$.
 Note that the damped term $(\alpha>0)$ is necessary for both linear and nonlinear Schr\"{o}dinger equation to be ergodic. 

Our work mainly focuses on the construction of a fully discrete and uniquely ergodic numerical scheme (i.e., whose numerical solution possesses a unique invariant measure). Moreover, 
the estimation of error between the original invariant measure and the numerical one is also considered based on the weak error of solutions.

In order to obtain a scheme whose noise remains in an explicit expression, we apply spectral Galerkin method
 in spatial direction 
 to obtain a $N$-dimensional SDE 
  \begin{equation}\label{1.2}
 du_N=\Big{(}\mathbf{i}\Delta u_N-\alpha u_N+\mathbf{i}\lambda\pi_N\left(|u_N|^2u_N\right)\Big{)}dt+\pi_NQ^{\frac{1}{2}}dW
 \end{equation}
 with $\pi_N$ being a projection operator. Here the spectral Galerkin method also ensures that the semigroup operator is the same as the one of \eqref{model}, which simplifies the error estimate in spatial direction.
 We find a Lyapunov function by proving the uniform boundedness of $u_N$ in $L^2$-norm. It ensures the existence of the invariant measure of (\ref{1.2}).
We show that the solution $u_N(t)$ is a strong Feller and irreducible process via the non-degeneracy of the noise term in (\ref{1.2}). 
Hence, $u_N(t)$ possesses a unique invariant measure $\mu_N$, which implies the ergodicity of $u_N(t)$.
We would like to emphasize that the noise in the original equation do not need to be non-degenerate. Our method is also available under the same assumption in \cite{debussche}, that is $\eta_m>0$, $m<N_*$ for some sufficiently large $N_*$. Here $N$ and $N_*$ need to satisfy the condition $N<N_*$ to ensure the non-degeneracy for the truncated noise and obtain the ergodicity for numerical solutions.
The error between invariant measures $\mu_N$ and $\mu$ is transferred into the weak error of the solutions, which is required to be independent of time $t$. 
Different from conservative equations, the damped term in (\ref{model}) and (\ref{1.2}) contributes to an exponential estimate on the difference between semigroup operators $S(t)$ and $S(t)\pi_N$, where $S(t)$ is generated by the linear operator $\mathbf{i}\Delta-\alpha$. Therefore, 
we achieve the time-independent weak error of solutions directly which, together with the ergodicity of $u$ and $u_N$, deduces the error between invariant measures $\mu_N$ and $\mu$.

For the temporal discretization of \eqref{1.2}, we propose a new scheme
\begin{equation}\label{1.3}
u_N^{k}-e^{-\alpha\tau}u_N^{k-1}=\left(\mathbf{i}\Delta u_N^{k}+\mathbf{i}\lambda\pi_N\left( \frac{|u_N^{k}|^2+|e^{-\alpha\tau}u_N^{k-1}|^2}{2}u_N^{k}\right)\right)\tau+\pi_NQ^{\frac{1}{2}}\delta W_{k},
\end{equation}
which is a modification of the implicit Euler scheme.
In order to analyze the effect of the time discretization, we investigate both the ergodicity of $u_N^k$ and the weak error between $u_N$ and $u_N^k$. 
The fully discrete scheme (\ref{1.3}) is specially constructed to ensure the uniform boundedness of $u_N^k$ in  $L^2$-, $\dot{H}^1$- and $\dot{H}^2$-norms, which is essential to obtain the existance of the invariant measure as well as the time-independence of the weak error.
Together with the Brouwer fixed point theorem and properties of homogeneous Markov chains, we prove that $u_N^k$ is uniquely ergodic. 
For the weak error, it is usually analyzed in a finite time interval $[0,T]$ and depends on $T$ (see e.g. \cite{BD06,debussche2}). 
In our cases, however,  the weak error between $u_N(T)$ and $u_N^M(T)$ is required to be independent of time $T$ and step $M$. 
Thus, some technical estimates are given to obtain the exponential decay of the difference between non-global Lipschitz nonlinear terms and between $S(t)$ and $S_{\tau}$. Based on the time-independency of the weak error of the solutions, we show that the error of invariant measures has at least the same order as the weak error of the solutions.

This paper is organized as follows. In section 2, some notations and definitions about ergodicity are introduced. In section 3, we apply spectral Galerkin method to (\ref{model}) and prove the ergodicity of the spatial semi-discrete scheme.  The time-independent weak error of the solutions, together with the error between invariant measures, is given. Section 4 is devoted to the proof of ergodicity of the fully discrete scheme. Moreover, we give the approximation error of invariant measure in temporal direction via the time-independent weak error. 
The last section is the appendix of some proofs.

\vspace{5mm}

\section{\textsc{\Large{P}reliminaries}}
In this section, we present some notations and the definition of ergodicity. Moreover, we introduce a sufficient condition for a stochastic process to be ergodic, which will be used in our proof on ergodicity of the numerical solution. 
\subsection{Notations}

We set 
the linear operator $A:=-\mathbf{i}\Delta+\alpha,$
and the semigroup $S(t):=e^{-tA}=e^{t(\mathbf{i}\Delta-\alpha)}$ is generated by $A$. 
The mild solution of (\ref{model}) exists globally and can be written as
$$u(t)=S(t)u_0+\mathbf{i}\lambda\int_0^tS(t-s)|u(s)|^2u(s)ds+\int_0^tS(t-s)Q^{\frac{1}{2}}dW(s).$$
It is obvious that $\{\lambda_n\}_{n\in\mathbb{N}}:=\big{\{}\mathbf{i}(n\pi)^2+\alpha\big{\}}_{n\in\mathbb{N}}$ is a sequence of eigenvalues of $A$ with $1\leq|\lambda_n|\rightarrow+\infty$ and
$\{e_n\}_{n\in\mathbb{N}}:=\big{\{}\sqrt{2}\sin{n\pi x}\big{\}}_{n\in\mathbb{N}}$ is the associated eigenbasis of $A$ with Dirichlet boundary condition. Denoting $L_0^2(0,1)$ as the space $L^2(0,1)$ with homogenous Dirichlet boundary condition, then $\{e_n\}_{n\in\mathbb{N}}$ is an orthonormal basis of $L_0^2(0,1)$.

\begin{df}
For all $s\in\mathbb{N}$, we define the normed linear space
$$\dot{H}^s:=D(A^{\frac{s}2})=\Big{\{}u\Big{|}u=\sum_{n=1}^{\infty}(u,e_n)e_n\in L_0^2(0,1)~s.t.~\sum_{n=1}^{\infty}\big{|}(u,e_n)\big{|}^2|\lambda_n|^s<\infty\Big{\}},$$
endowed with the $s$-norm
$$\|u\|_s:=\left(\sum_{n=1}^{\infty}\big{|}\left(u,e_n\right)\big{|}^2\left|\lambda_n\right|^s\right)^{\frac{1}{2}},$$
where the inner product in the complex Hilbert space $L^2(0,1)$ is defined by
$$(u,v)=\int_0^1u(x)\overline{v}(x)dx,~\forall\,u,v\in L^2(0,1).$$
In particular,  $\|u\|_0=\|u\|_{L^2}, \forall\,u\in \dot{H}^0$.
\end{df}
In the sequel, we use notations $L^2:=L^2(0,1)$ and $H^s:=H^s(0,1)$. It's easy to check that the above norms satisfy $\|u\|_r\leq\|u\|_s (\forall\,0\leq r\leq s)$ and $\|u\|_s\cong\|u\|_{H^s} (s=0,1,2)$ for any  $u\in\dot{H}^s$.

The operator norm is defined as
$$\|B\|_{\mathcal{L}(\dot{H}^s,\dot{H}^r)}=\sup_{u\in \dot{H}^s}\frac{\|Bu\|_r}{\|u\|_s},\;\forall\,r, s\in\mathbb{N},$$
hence, for $0\leq r\leq s$,
$$\|S(t)\|_{\mathcal{L}(\dot{H}^s,\dot{H}^r)}=\sup_{u\in \dot{H}^s}\frac{\left(\sum_{n=1}^{\infty}\big{|}\left(e^{t(\mathbf{i}\Delta-\alpha)}u,e_n\right)\big{|}^2\left|\lambda_n\right|^r\right)^{\frac{1}{2}}}{\|u\|_s}=\sup_{u\in \dot{H}^s}\frac{e^{-\alpha t}\|u\|_r}{\|u\|_s}\leq e^{-\alpha t}.$$
We need $Q^{\frac12}$ to be a Hilbert-Schmidt operator from $L^2$ to $\dot{H}^s$ with norm
$$\|Q^{\frac12}\|^2_{\mathcal{HS}(L^2,\dot{H}^s)}
:=\sum_{m=1}^{\infty}\|Q^{\frac12}e_m\|_{s}^2
=\sum_{m=1}^{\infty}|\lambda_m|^s\eta_m<\infty.$$
Assumptions on $s$ will be given below.
\subsection{Ergodicity}
Let $P_t$ be the Markov transition semigroup with an invariant measure $\mu$ and $V$ be a Hilbert space. 
The Von Neumann theorem ensures that the limit
$$\lim\limits_{T\rightarrow\infty}\frac{1}{T}\int_0^TP_t\phi(y)dt, \;\;\phi\in L^2(V,\mu)$$ always exists in $L^2(V,\mu)$, where $y$ denotes the initial value of the stochastic process.
\begin{df}(see e.g. \cite{daprato})
If $P_t$ has an invariant measure $\mu$, and in addition it happens that
\begin{equation}\label{df}
\lim\limits_{T\rightarrow\infty}\frac{1}{T}\int_0^TP_t\phi(y)dt=\int_V\phi d\mu\;\;\; in \;\;L^2(V,\mu)
\end{equation}
for all $\phi\in L^2(V,\mu)$. Then $P_t$ is said to be ergodic.
\end{df}
\begin{rk}
In the following sections, we choose $P_t\phi(u_0)=E[\phi(u(t))|u(0)=u_0]$ for any deterministic initial value $u_0$, and take expectation of both sides of (\ref{df}) to obtain
\begin{equation}\label{df2}
\lim\limits_{T\rightarrow\infty}\frac{1}{T}\int_0^TE[\phi(u)]dt=\int_V\phi d\mu\;\;\; in \;\;\mathbb{R}.
\end{equation}
\end{rk}
The sufficient conditions for a stochastic process to be ergodic are stated in the following theorem.
\begin{tm}(see e.g. \cite{daprato})\label{ergodicity}
Let $F:V\rightarrow[0,\infty]$ be a Borel function (Lyapunov function) whose level sets $$K_a:=\{x\in V:F(x)\leq a\}$$
are compact for any $a>0$. Assume that there exists $y \in V$ and  $C(y)>0$ such that
$$E\Big{[}F\big{(}u(t;y)\big{)}\Big{]}\leq C(y)\;\;\;for\;\;all\;\;t\in\mathbb{R^+},$$
where $u(t;y)$ denotes a stochastic process whose start point is y. Then u has at least one invariant measure.

If in addition the associated semigroup $P_t$ is strong Feller and irreducible, then u possesses a unique invariant measure. Thus, u is ergodic.
\end{tm}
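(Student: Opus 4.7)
The plan is to combine the classical Krylov-Bogolyubov construction for existence with Doob's theorem for uniqueness, both of which are standard tools in the Da Prato--Zabczyk framework cited just before the statement.

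For existence, I would consider the time-averaged laws
\begin{equation*}
\mu_T(\Gamma):=\frac{1}{T}\int_0^T\P\bigl(u(t;y)\in\Gamma\bigr)\,dt,\qquad \Gamma\in\mathcal{B}(V).
\end{equation*}
The Lyapunov bound plus Markov's inequality give, for every $a>0$,
\begin{equation*}
\mu_T(V\setminus K_a)\le \frac{1}{T}\int_0^T\frac{E[F(u(t;y))]}{a}\,dt\le\frac{C(y)}{a},
\end{equation*}
uniformly in $T$. Since the level sets $K_a$ are compact in $V$, this shows that the family $\{\mu_T\}_{T\ge 1}$ is tight, so Prokhorov's theorem yields a weakly convergent subsequence $\mu_{T_n}\Rightarrow\mu_\infty$. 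A standard computation, using the Feller property implicit in the strong Feller hypothesis to justify passing $P_s^*$ through the weak limit, shows that $P_s^*\mu_\infty=\mu_\infty$ for every $s\ge 0$, so $\mu_\infty$ is an invariant measure.

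For uniqueness, I would invoke Doob's theorem: if $P_t$ is strong Feller and irreducible for some (hence all) $t>0$, then all the transition kernels $P_t(x,\cdot)$ with $t>0$ and $x\in V$ are mutually equivalent. Consequently any two invariant probability measures of $P_t$ must themselves be equivalent; but distinct ergodic invariant measures are mutually singular, so the invariant measure produced in the first step is unique. Finally, to recover the ergodicity statement in (\ref{df}), I would appeal to the fact that for a Markov semigroup uniqueness of the invariant measure is equivalent to ergodicity in the von Neumann sense, since the $L^2(V,\mu)$-limit $\lim_{T\to\infty}\tfrac1T\int_0^T P_t\phi\,dt$ is a $P_t$-invariant function and, by the strong Feller/irreducibility-driven $0$--$1$ law (equivalently, extremality of $\mu$ in the set of invariant measures), such functions are $\mu$-a.s. constant equal to $\int_V \phi\,d\mu$.

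The only delicate point in this plan is the passage to the limit that identifies $\mu_\infty$ as invariant and, more subtly, the use of Doob's theorem, which requires the strong Feller and irreducibility assumptions to hold at a common time $t_0>0$; under the setting of the paper (and of Da Prato--Zabczyk) this is automatic, so no extra work is needed. Since the result is essentially standard, I would not reproduce the internal proofs of Krylov--Bogolyubov and Doob but rather cite \cite{daprato} and assemble the two ingredients as above.
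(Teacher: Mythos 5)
This theorem is not proved in the paper at all: it is quoted verbatim from the reference \cite{daprato} (Da Prato--Zabczyk) and used as a black box, so there is no internal proof to compare against. Your reconstruction --- Krylov--Bogolyubov time averages made tight by the Lyapunov bound and the compactness of the level sets $K_a$, followed by Doob's theorem (via Khas'minskii's equivalence of the transition kernels under strong Feller plus irreducibility) for uniqueness, and extremality of the unique invariant measure to upgrade the von Neumann limit to the constant $\int_V\phi\,d\mu$ --- is precisely the standard argument underlying the cited result, and I see no error in it.

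One small caveat worth being explicit about: the existence half of the statement, as written, assumes only the Lyapunov condition, yet identifying the weak limit $\mu_\infty$ as invariant requires $P_s\phi$ to be bounded and continuous for bounded continuous $\phi$, i.e.\ the (ordinary) Feller property. You borrow this from the strong Feller hypothesis, but that hypothesis is only imposed in the second half of the theorem; strictly speaking the Feller property is an unstated standing assumption of the first half (as it is in the Krylov--Bogolyubov theorem in \cite{daprato}). In the paper's application the semigroup of the Galerkin SDE is Feller, so nothing breaks, but your proof should flag Feller continuity as a hypothesis of the existence step rather than a consequence of the uniqueness hypotheses.
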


For \eqref{model}, it is ergodic with a unique invariant measure. 
\begin{tm}(see \cite{debussche})
There exists a unique stationary probability measure $\mu$ of $\{P_t\}_{t\in\mathbb{R^+}}$ on $H_0^1(0,1)$. Moreover, for any $p\in\mathbb{N}\verb{\{ \{0\}$, $\mu$ satisfies
$$\int_{H_0^1(0,1)}\|u\|_1^{2p}d\mu<\infty.$$
\end{tm}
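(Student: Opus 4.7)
My plan is to follow the strategy announced in the introduction: combine a priori estimates on a modified Hamiltonian with a Krylov--Bogoliubov tightness argument for existence, then invoke a coupling method together with Foias--Prodi type estimates for uniqueness. Since $-\mathbf{i}\Delta$ generates a group of isometries rather than an analytic semigroup, there is no parabolic smoothing, so the damping term $-\alpha u$ is the only mechanism producing contraction; this dictates every step.

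For existence, the first step is to derive uniform-in-time moment estimates for $\|u(t)\|_1$. The natural Lyapunov candidate is
\begin{equation*}
\mathcal{H}(u)=\tfrac12\|u\|_1^2-\tfrac{\lambda}{4}\|u\|_{L^4}^4+c_0\|u\|_0^6,
\end{equation*}
with $c_0$ chosen large enough so that $\mathcal{H}(u)\gtrsim \|u\|_1^2-C$, using the Gagliardo--Nirenberg inequality $\|u\|_{L^4}^4\lesssim \|u\|_1\|u\|_0^3$ to absorb the non-coercive $L^4$ piece into the leading $\|u\|_1^2$. Applying Itô's formula to $\mathcal{H}(u(t))$ and exploiting that the Hamiltonian flow (without damping and noise) preserves the unmodified Hamiltonian, the drift reduces to $-\alpha$ times a coercive expression plus the Itô correction coming from $Q^{1/2}$; here the trace-class assumption $\sum\eta_m<\infty$ together with $Q^{1/2}\in\mathcal{HS}(L^2,\dot H^1)$ controls the quadratic variation. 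A Gronwall-type comparison then yields $\sup_{t\ge 0}E[\|u(t)\|_1^{2p}]<\infty$ for every $p\ge 1$. Combined with the compact embedding $H_0^1\hookrightarrow L_0^2$ (and a higher-norm estimate if needed, done analogously by iterating on $\dot H^2$), the family of Cesàro-averaged laws is tight on $H_0^1$, and Krylov--Bogoliubov delivers at least one invariant measure $\mu$; passing to the limit in the moment bound gives the claimed integrability $\int \|u\|_1^{2p}d\mu<\infty$.

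For uniqueness, I would follow the Kuksin--Shirikyan / Hairer--Mattingly paradigm adapted by Debussche--Odasso. The idea is to couple two solutions $u,v$ starting from arbitrary initial data in $H_0^1$ by driving the low-frequency modes (say those indexed by $m\le N_*$) with the same noise on a well-chosen event, using Girsanov's theorem to bring the low-mode increments together; non-degeneracy of $\eta_m$ on low modes is exactly what makes this possible. One then invokes a Foias--Prodi type inequality, which for the damped nonlinear Schrödinger equation takes the schematic form
\begin{equation*}
\tfrac{d}{dt}\|P_N^\perp(u-v)\|_1^2\le -2\alpha\|P_N^\perp(u-v)\|_1^2+C(\|u\|_1,\|v\|_1)\|P_N(u-v)\|_1^2,
\end{equation*}
so that closeness of the low modes forces exponential decay of the high modes in $H^1$ along trajectories where the a priori estimate keeps $\|u\|_1,\|v\|_1$ bounded. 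A stopping-time / return-time argument using the Lyapunov function above then makes those good events sufficiently likely, yielding a contraction in a suitable Kantorovich--Wasserstein distance on probability measures and hence uniqueness of $\mu$.

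The genuine obstacle is the coupling/uniqueness step: the cubic nonlinearity is only locally Lipschitz, the Schrödinger semigroup is non-smoothing, and the Girsanov change of measure needed to synchronize the low modes produces an exponential martingale whose $L^p$ bound requires careful control of the low-mode residual drift via the $H^1$ Lyapunov estimate. Balancing the exit probability from Lyapunov sublevel sets against the Girsanov cost -- and proving the Foias--Prodi estimate in $H^1$ rather than in $L^2$, which is what is needed here because the invariant measure lives on $H_0^1$ -- is the delicate point, and is precisely where the quoted reference \cite{debussche} does the substantial work.
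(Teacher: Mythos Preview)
The paper does not give a proof of this theorem at all: it is stated with the attribution ``(see \cite{debussche})'' and no argument follows. The only relevant content in the paper is the sentence in the introduction saying that ergodicity for \eqref{model} has been established in \cite{debussche} ``based on a coupling method, Foias--Prodi type estimates and a priori estimates for a modified Hamiltonian $\mathcal{H}=\frac12\|\cdot\|_1^2-\frac{\lambda}4\|\cdot\|_{L^4}^4+c_0\|\cdot\|_0^6$.'' Your proposal reproduces exactly this outline --- Lyapunov/Krylov--Bogoliubov for existence using $\mathcal{H}$, then coupling plus Foias--Prodi and Girsanov for uniqueness under low-mode non-degeneracy --- and, like the paper, ultimately defers the hard analysis to \cite{debussche}. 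So there is nothing to compare: your sketch is consistent with the strategy the paper attributes to the cited reference, and the paper itself provides no further details.
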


\vspace{5mm}

\section{\textsc{\Large{S}patial semi-discretization}}
We apply spectral Galerkin method to problem (\ref{model}) to get a spatial semi-discrete scheme which is a finite-dimensional SDE. We show that the solution $u_N$ of (\ref{spectral}) possesses a unique invariant measure $\mu_N$, which leads to the ergodicity of $u_N$. Furthermore, we prove that the weak error of the spatial semi-discrete scheme does not depend on the time interval, which implies that $\mu_N$ converges to $\mu$ in at least the same rate.

\subsection{Spectral Galerkin method}
The finite-dimensional spectral space is defined as  $$V_N:=span\{e_m\}_{m=1}^N.$$
Let
$\pi_N:\dot{H}^0\rightarrow V_N$ be a projection operator, which is defined as
$$\pi_Nu=\sum_{m=1}^N(u,e_m)e_m,~\forall\,u=\sum_{m=1}^{\infty}(u,e_m)e_m\in \dot{H}^0.$$
We use $u_N$ as an approximation to the original solution $u$, and the spatial semi-discrete scheme is expressed as
\begin{equation}\label{spectral}
\left\{
\begin{aligned}
&du_N=\Big{(}\mathbf{i}\Delta u_N-\alpha u_N+\mathbf{i}\lambda\pi_N\left(|u_N|^2u_N\right)\Big{)}dt+\pi_NQ^{\frac{1}{2}}dW\\
&u_N(0,x)=\pi_Nu_0(x),
\end{aligned}
\right.
\end{equation}
where $\pi_NQ^{\frac{1}{2}}dW=\sum_{m=1}^N\sqrt{\eta_m}e_m(x)d\beta_m(t)$, 
and the projection operator $\pi_N$ is bounded
$$\|\pi_N\|_{\mathcal{L}(\dot{H}^s,L^2)}\leq 1,\;\;\forall\,s\in\mathbb{N}.$$

\subsection{Ergodicity of spatial semi-discrete scheme}\label{Ergodicity of space semi-discrete scheme}
\begin{tm}\label{spaceergodic}
Let $u_N(t,x)$ be the solution of equation (\ref{spectral}), then $u_N$ possesses a unique invariant measure, denoted by $\mu_N$. Thus, $u_N$ is ergodic.
\end{tm}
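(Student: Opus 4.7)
The plan is to apply Theorem \ref{ergodicity} to the finite-dimensional SDE (\ref{spectral}) posed on $V_N$. This requires three ingredients: a Lyapunov function with compact level sets together with a uniform-in-time moment bound, the strong Feller property of the transition semigroup $P_t^N$ associated with $u_N$, and its irreducibility.

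For the Lyapunov step I would take $F(v):=\|v\|_0^2$, whose level sets are compact since $V_N$ is finite-dimensional. Applying It\^o's formula to $\|u_N\|_0^2$ and using that $\mathbf{i}\Delta$ is skew-adjoint on $V_N$ together with the identity
\begin{equation*}
\bigl(\pi_N(|u_N|^2u_N),u_N\bigr) = \bigl(|u_N|^2u_N,u_N\bigr) = \|u_N\|_{L^4}^4 \in\mathbb{R},
\end{equation*}
shows that the real parts of the dispersive and cubic contributions vanish, leaving
\begin{equation*}
d\|u_N\|_0^2 = \bigl(-2\alpha\|u_N\|_0^2 + C\|\pi_N Q^{1/2}\|_{\mathcal{HS}(L^2,L^2)}^2\bigr)\,dt + dM_t
\end{equation*}
for a real-valued martingale $M_t$. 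Taking expectation and solving the resulting linear differential inequality yields
$$\mathbb{E}\|u_N(t)\|_0^2\le \|u_0\|_0^2+\frac{C\eta}{2\alpha}\qquad\text{for all }t\ge0,$$
which is exactly the uniform moment bound required by Theorem \ref{ergodicity} and therefore delivers existence of at least one invariant measure $\mu_N$ supported on $V_N$.

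For uniqueness I would establish strong Feller and irreducibility using the standing non-degeneracy assumption $\eta_m>0$ for $m\le N$ (ensured by the constraint $N<N_*$). On $V_N\cong\mathbb{C}^N$ the diffusion $\pi_NQ^{1/2}$ is a constant, invertible linear map, so (\ref{spectral}) is a uniformly elliptic SDE with smooth, locally Lipschitz drift. The strong Feller property then follows from classical parabolic regularity of the associated Kolmogorov equation, or equivalently from the Bismut--Elworthy--Li formula. Irreducibility is obtained by a standard support argument: for any open set $B\subset V_N$ and any $v_0\in V_N$, one constructs a deterministic control $\varphi\in L^2(0,T;V_N)$ whose skeleton ODE (obtained by replacing $\pi_NQ^{1/2}dW$ with $\varphi\,dt$) steers $v_0$ into $B$ in finite time; invertibility of the diffusion on $V_N$ then lets Girsanov's theorem transfer this positive hitting probability to the law of $u_N(T)$. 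Theorem \ref{ergodicity} then produces the unique invariant measure $\mu_N$ and hence the ergodicity of $u_N$.

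The main obstacle I anticipate is securing the time-uniform $L^2$ moment bound in spite of the non-Lipschitz cubic nonlinearity. The damping $\alpha>0$ is essential here, as is the observation that both $\mathbf{i}\Delta u_N$ and $\mathbf{i}\lambda\pi_N(|u_N|^2u_N)$ contribute only to the imaginary part of the $L^2$ energy identity and therefore drop out, collapsing the estimate for $\mathbb{E}\|u_N\|_0^2$ into a bona fide linear ODE that does not grow in $t$.
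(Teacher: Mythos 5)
Your proposal is correct and follows essentially the same route as the paper: a Lyapunov bound on $\mathbb{E}\|u_N\|_0^2$ obtained by cancelling the dispersive and cubic terms in the $L^2$ energy identity, plus strong Feller and irreducibility from the non-degeneracy of the truncated noise on $V_N$, all fed into Theorem \ref{ergodicity}. The only cosmetic differences are that the paper verifies strong Feller via H\"ormander's condition for the equivalent real $2N$-dimensional system and proves irreducibility by combining the associated control problem with the small-ball property of Brownian motion and Gr\"onwall's inequality, rather than ellipticity/Bismut--Elworthy--Li and Girsanov.
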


\begin{proof}
Following from Theorem \ref{ergodicity}, we need to show three properties of $u_N$, "strong Feller", "irreducibility" and "Lyapunov condition", in order to show the ergodicity of $u_N$. Thus the proof is divided into three parts as follows.

\textbf{Part 1. Strong Feller.} 
We transform (\ref{spectral}) into its equivalent finite-dimensional SDE form. Denote $a_m(t)=\big{(}u_N(t,x),e_m(x)\big{)}$ and we have $$u_N(t,x)=\sum_{m=1}^Na_m(t)e_m(x).$$
Applying the It\^{o}'s formula to $a_m(t)$ leads to
$$da_m(t)=\Big{[}-\lambda_ma_m(t)+\left(\mathbf{i}\lambda\pi_N\left(|u_N|^2u_N\right),e_m\right)\Big{]}dt+\sqrt{\eta_m}d\beta_m(t),\quad1\le m\le N.$$
We decompose the above equation into its real and imaginary parts by 
denoting $a_m=a_{m}^1+\mathbf{i}a_{m}^2,~\lambda_m=\lambda_{m}^1+\mathbf{i}\lambda_{m}^2$ and $\beta_m=\beta_{m}^1+\mathbf{i}\beta_{m}^2$, where $\{\beta_{m}^i\}_{1\le m\le N,i=1,2}$ is a family of  independent $\mathbb{R}$-valued Wiener processes and the superscripts $1$ and $2$ mean the real and imaginary parts of a complex value, respectively, and obtain
\begin{equation*}
\left\{
\begin{aligned}
da_m^1=\Big{[}-\lambda_m^1a_m^1+\lambda_m^2a_m^2+Re\left(\mathbf{i}\lambda\pi_N\left(|u_N|^2u_N\right),e_m\right)\Big{]}dt+\sqrt{\eta_m}d\beta_m^1(t),\\
da_m^2=\Big{[}-\lambda_m^2a_m^1-\lambda_m^1a_m^2+Im\left(\mathbf{i}\lambda\pi_N\left(|u_N|^2u_N\right),e_m\right)\Big{]}dt+\sqrt{\eta_m}d\beta_m^2(t).
\end{aligned}
\right.
\end{equation*}
 With notations
\begin{equation*}X(t)=
\left(
\begin{array}{c}
a_{1}^1(t)\\
a_{1}^2(t)\\
\vdots\\
a_{N}^1(t)\\
a_{N}^2(t)
\end{array}
\right)\in \mathbb{R}^{2N},\;\;F=
\left(
\begin{array}{ccc}
\Lambda_1& & \\
 &\ddots& \\
 & & \Lambda_N
\end{array}
\right),\;\;\Lambda_i=
\left(
\begin{array}{cc}
-\lambda_{i}^1&\lambda_{i}^2\\
-\lambda_{i}^2&-\lambda_{i}^1
\end{array}
\right),
\end{equation*}

\begin{equation*}G(X(t))=
\left(
\begin{array}{c}
Re~\left(\mathbf{i}\lambda\pi_N\left(|u_N|^2u_N\right),e_1\right)\\
Im~\left(\mathbf{i}\lambda\pi_N\left(|u_N|^2u_N\right),e_1\right)\\
\vdots\\
Re~\left(\mathbf{i}\lambda\pi_N\left(|u_N|^2u_N\right),e_N\right)\\
Im~\left(\mathbf{i}\lambda\pi_N\left(|u_N|^2u_N\right),e_N\right)
\end{array}
\right),\;\;\beta=
\left(
\begin{array}{c}
\beta_{1}^1\\
\beta_{1}^2\\
\vdots\\
\beta_{N}^1\\
\beta_{N}^2
\end{array}
\right),
\end{equation*}
and
\begin{equation*}Z=
\left(
\begin{array}{ccccc}
\sqrt{\eta_1}&&&&\\
&\sqrt{\eta_1}&&&\\
&&\ddots&&\\
&&&\sqrt{\eta_N}&\\
&&&&\sqrt{\eta_N}
\end{array}
\right):=(Z_{1}^1,Z_{1}^2\cdots,Z_{N}^1,Z_{N}^2),
\end{equation*}
we get an equivalent form of (\ref{spectral})

\begin{align*}
dX(t)=&\Big{[}FX(t)+G\big{(}X(t)\big{)}\Big{]}dt+\sum_{m=1}^N\sum_{i=1}^2Z_{m}^id\beta_{m}^i,\nonumber\\
:=&Y\left(X(t)\right)+\sum_{m=1}^N\sum_{i=1}^2Z_{m}^id\beta_{m}^i.
\end{align*}
It is obvious that
$$span\{Z_{1}^1,Z_{1}^2,\cdots,Z_{N}^1,Z_{N}^2\}=\mathbb{R}^{2N},$$
which means the H\"{o}rmander's condition holds. 
According to the H\"{o}rmander theorem\cite{hormander}, $X(t)$ is a strong Feller process.

\textbf{Part 2. Irreducibility.} 
By using the same notations as above, we have
\begin{equation}\label{A}
dX=Y(X)dt+Zd\beta,
\end{equation}
with $X=X(t)\in\mathbb{R}^{2N},~X(0)=y$ and $Z$ being invertible.
Using a similar technique as \cite{mattingly}, we consider the associated control problem
\begin{equation}\label{control}
d\overline{X}=Y(\overline{X})dt+ZdU,
\end{equation}
with $\overline{X}=\overline{X}(t)$ and a smooth control function $U\in C^1(0,T)$.
For any fixed $T>0$, $y\in \mathbb{R}^{2N}$ and $y^+\in \mathbb{R}^{2N}$, using polynomial interpolation, we derive a continuous function $\left(\overline{X}(t),~{t\in[0,T]}\right)$ such that
$$\overline{X}(0)=y,\;\;\overline{X}(T)=y^+.$$
Hence, $$dU=Z^{-1}\big{(}d\overline{X}-Y(\overline{X})dt\big{)},$$
and we get the control function $U$ such that (\ref{control}) is satisfied with $\overline{X}(0)=y,\,\overline{X}(T)=y^+$ and $U(0)=0$.
We subtract the resulting equations (\ref{A}) and (\ref{control}), and achieve
$$X(t)-\overline{X}(t)=\int_0^tY(X(s))-Y(\overline{X}(s))ds+Z(\beta(t)-U(t)),\;\;t\in[0,T].$$
According to the properties of Brownian motion,
$$P\left(\sup_{0\leq t\leq T}\big{|}\beta(t)-U(t)\big{|}\leq\epsilon\right)>0,\;\;\forall\,\epsilon>0.$$
Note that Y is locally Lipschitz because of its continuous differentiability,  and the ranges of $X(t)$ and $\overline{X}(t)~(t\in[0,T])$ are both compact sets. Thus, it holds 
$$P\left(\big{|}X(t)-\overline{X}(t)\big{|}\leq\int_0^tC_1\big{|}X(s)-\overline{X}(s)\big{|}ds+C_2\epsilon,\;\;\forall\; t\in[0,T]\right)>0,\;\;\forall\;\epsilon>0$$
with $C_1$ and $C_2$ are positive constants independent of $\epsilon$. 
Then the Gr\"onwall's inequality yields
$$P\bigg{(}\big{|}X(t)-\overline{X}(t)\big{|}\leq C_2(1+e^{C_1t})\epsilon,\;\;\forall\;t\in[0,T]\bigg{)}>0,\;\;\forall\;\epsilon>0.$$
For any $\delta>0$, choosing $t=T$ and $\epsilon={\delta}/{C_2(1+e^{C_1T})}>0$, we finally obtain
 $$P\Big{(}|X(T)-y^+|<\delta\Big{)}>0.$$
  In other words, $X(T)$ hits $B(y^+,\delta)$ with positive probability. The irreducibility has been proved.

The above two conditions ensure the uniqueness of the invariant measure of $X(t)$. It suffices to show the existence of invariant measures in the following.

\textbf{Part 3. Lyapunov condition.} 
A useful tool for proving existence of invariant measures is provided by Lyapunov functions, which is introduced in Theorem \ref{ergodicity}.
It\^o's formula applied to $\|u_N(t)\|_0^2$ implies that
\begin{align}\label{un}
d\|u_N(t)\|_0^2=&-2\alpha\|u_N(t)\|_0^2dt+2Re \int_0^1\overline{u}_N(t)\pi_NQ^{\frac{1}{2}}dxdW(t)+2\sum_{m=1}^N\eta_mdt,
\end{align}
where we have used the fact that
\begin{align*}
&Re\left[\mathbf{i}\lambda\int_0^1\pi_N(|u_N|^2u_N)\overline{u}_Ndx\right]\\
=&Re\left[\mathbf{i}\lambda\int_0^1\left(|u_N|^4-(Id-\pi_N)(|u_N|^2u_N)\overline{u}_N\right)dx\right]\\
=&-\lambda Im~\Big{(}(Id-\pi_N)(|u_N|^2u_N),u_N\Big{)}\\
=&0.
\end{align*}
Taking expectation on both sides of \eqref{un}, we get
$$\frac{d}{dt}E\|u_N(t)\|_0^2=-2\alpha E\|u_N(t)\|_0^2+C_N,$$
where $C_N=2\sum_{m=1}^{N}\eta_m\leq2\eta$.
It is solved as
$$E\|u_N(t)\|_0^2=e^{-2\alpha t}\Big{(}\int_0^tC_Ne^{2\alpha s}ds+E\|u_N(0)\|_0^2\Big{)}\leq e^{-2\alpha t}E\|u_N(0)\|_0^2+C,~\forall\,t>0.$$
On the other hand,
\begin{align*}
\|u_N(t)\|_0^2=\int_0^1\Big{|}\sum_{m=1}^{N}a_m(t)e_m(x)\Big{|}^2dx
=\|X(t)\|^2_{l^2(\mathbb{R}^{2N})}.
\end{align*}
Define $F=\|\cdot\|_{l^2(\mathbb{R}^{2N})}:\mathbb{R}^{2N}\rightarrow[0,+\infty]$. The level sets of $F$ are tight by Heine-Borel theorem.
Therefore, $X(t)$ is ergodic.
We mention that the ergodicity of $X(t)$ is equivalent to the existence of a random variable $\xi=(\xi_{1}^1,\xi_{1}^2,\cdots,\xi_{N}^1,\xi_{N}^2)$ such that
\begin{equation*}
\lim_{t\to\infty}X(t)=\xi,~\text{i.e.},~\lim_{t\to\infty}a_{m}^i(t)=\xi_{m}^i,~\forall~m=1,\cdots,N,~i=1,2.
\end{equation*}
It leads to
\begin{equation*}
\lim_{t\to\infty}u_N(t)=\sum_{m=1}^{N}\left(\xi_{m}^1+\mathbf{i}\xi_{m}^2\right)e_m,
\end{equation*}
which shows the ergodicity of $u_N(t)$.
\end{proof}

According to the proof of Lyapunov condition, we have the following uniform boundedness for 0-norm. Moreover, 1-norm is also uniformly bounded, which is also stated in the following proposition. Its proof is given in appendix \ref{5.1} for readers' convenience. In sequel, all the constants $C$ are independent of the end point $T$ of time interval and may be different from line to line.
\begin{prop}\label{1norm}
Assume that $u_0\in \dot{H}^1$, $\|Q^{\frac12}\|_{\mathcal{HS}(L^2,\dot{H}^1)}<\infty$ and $p\ge1$. There exists positive constants  $c_0$ and $C=C(\alpha,p,u_0,c_0,Q)$, such that for any $t>0$,
\begin{align*}
\romannumeral1)&~E\|u_N(t)\|_0^{2p}\leq e^{-2\alpha pt}E\|u_N(0)\|_0^{2p}+C\le C,\\
\romannumeral2)&~E\mathcal{H}(u_N(t))^{p}\leq e^{-\alpha p t}E\mathcal{H}(u_N(0))^{p}+C\le C,
\end{align*}
where $\mathcal{H}(u_N(t))=\frac12\|\nabla u_N(t)\|_0^2-\frac{\lambda}4\|u_N(t)\|_{L^4}^4+c_0\|u_N(t)\|_0^6$.
In addition, if assume further $u_0\in\dot{H}^2$ and $\|Q^{\frac12}\|_{\mathcal{HS}(L^2,\dot{H}^2)}<\infty$, we also have
\begin{align*}
\romannumeral3)&~E\|u_N(t)\|_2^2\leq C.
\end{align*}
\end{prop}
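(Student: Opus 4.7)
The plan is to apply It\^o's formula successively to $\|u_N(t)\|_0^{2p}$, $\mathcal{H}(u_N(t))^p$, and $\|u_N(t)\|_2^2$, and in each case to extract a dissipative contribution in the drift that dominates both the It\^o correction from the noise and all lower-order polynomial terms; a Gr\"onwall inequality then supplies the time-uniform bound. In all three estimates the crucial time-uniformity stems from the damping $\alpha>0$, exactly as in the corresponding a priori argument for the continuous equation.

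For (i), starting from identity (\ref{un}) already derived in the proof of Theorem \ref{spaceergodic}, I would apply It\^o to $\|u_N\|_0^{2p}$ and obtain
\begin{equation*}
d\|u_N\|_0^{2p}=p\|u_N\|_0^{2(p-1)}\bigl(-2\alpha\|u_N\|_0^2+C_N\bigr)dt+2p(p-1)\|u_N\|_0^{2(p-2)}\sum_{m=1}^N\eta_m|(u_N,e_m)|^2\,dt+dM_t,
\end{equation*}
with $M_t$ a real local martingale. Bounding $\sum_m\eta_m|(u_N,e_m)|^2\leq\eta\|u_N\|_0^2$, taking expectation, and applying Young's inequality to the remaining $\|u_N\|_0^{2(p-1)}$ term produces an inequality of the form $\frac{d}{dt}E\|u_N\|_0^{2p}\leq-\kappa\,E\|u_N\|_0^{2p}+C$ with $\kappa$ arbitrarily close to $2\alpha p$, from which (i) follows by integration.

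For (ii), I would compute $d\mathcal{H}(u_N)$ via It\^o. The key algebraic identity---exactly the NLS energy identity---is that the drift contributions of the Schr\"odinger term $\mathbf{i}\Delta u_N$ and of the nonlinearity $\mathbf{i}\lambda\pi_N(|u_N|^2u_N)$ both vanish when paired with the variational derivative $-\Delta u_N-\lambda|u_N|^2u_N$ of the first two pieces of $\mathcal{H}$; only the damping $-\alpha u_N$ survives, contributing the dissipation $-\alpha\|\nabla u_N\|_0^2+\alpha\lambda\|u_N\|_{L^4}^4-6\alpha c_0\|u_N\|_0^6$. The It\^o correction $\tfrac12\tr(\mathcal{H}''(u_N)\pi_NQ\pi_N)$ is uniformly bounded by $C(1+\|u_N\|_0^4+\|u_N\|_{L^4}^2)$ thanks to $\|Q^{1/2}\|_{\mathcal{HS}(L^2,\dot{H}^1)}<\infty$ and estimate (i). The 1D Gagliardo--Nirenberg inequality
\begin{equation*}
\|u_N\|_{L^4}^4\leq C\|u_N\|_0^3\|\nabla u_N\|_0\leq\varepsilon\|\nabla u_N\|_0^2+C_\varepsilon\|u_N\|_0^6,
\end{equation*}
combined with a sufficiently large choice of $c_0$ (which also ensures $\mathcal{H}\geq 0$), then yields $d\mathcal{H}(u_N)\leq\bigl(-\alpha\mathcal{H}(u_N)+C\bigr)dt+d\widetilde{M}_t$. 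For the $p$-th moment I apply It\^o to $\mathcal{H}^p$, bound the extra $p(p-1)/2$ It\^o correction by interpolation using (i), and close with Gr\"onwall.

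For (iii), I would apply It\^o to $\|u_N\|_2^2=\|Au_N\|_0^2$. The linear term $\mathbf{i}\Delta u_N$ contributes zero to the drift (skew-adjointness of $\Delta$), the damping supplies $-2\alpha\|u_N\|_2^2$, and the It\^o correction equals $2\|\pi_NQ^{1/2}\|_{\mathcal{HS}(L^2,\dot{H}^2)}^2\leq 2\|Q^{1/2}\|_{\mathcal{HS}(L^2,\dot{H}^2)}^2<\infty$. The nonlinear contribution $-2\lambda\,\text{Im}\langle A u_N,A(|u_N|^2 u_N)\rangle$, after integration by parts, reduces to terms bounded by $C\|u_N\|_1^2\|u_N\|_2^2+C\|u_N\|_1^{5/2}\|u_N\|_2^{3/2}$, using $\dot H^1\hookrightarrow L^\infty$ in one dimension and the 1D inequality $\|\nabla u_N\|_{L^4}^2\leq C\|u_N\|_1^{3/2}\|u_N\|_2^{1/2}$. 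Young's inequality absorbs a $\varepsilon\|u_N\|_2^2$ into the dissipation, leaving residual polynomial terms of the form $C\|u_N\|_1^{10}+C\|u_N\|_1^2\|u_N\|_2^2$ that must then be handled via the time-uniform $\|u_N\|_1$-moments from (ii). The main obstacle in this plan is precisely this coupling in step (iii): the cross term $\|u_N\|_1^2\|u_N\|_2^2$ cannot be absorbed into $-\alpha\|u_N\|_2^2$ without using H\"older in expectation together with the high-order moment bounds on $\|u_N\|_1$ from (ii), which is the delicate point that makes the iterative order (i)$\Rightarrow$(ii)$\Rightarrow$(iii) essential.
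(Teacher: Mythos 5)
Your parts (i) and (ii) follow essentially the paper's own route: It\^o's formula, the Hamiltonian cancellation so that only the damping survives in the drift, the Gagliardo--Nirenberg choice of $c_0$ making $\mathcal{H}\ge0$, and Gr\"onwall. (The paper handles the lower-order term $E\|u_N\|_0^{2(p-1)}$ in (i) by induction on $p$ rather than by Young's inequality; your variant still gives the uniform bound, though the stated decay rate $e^{-2\alpha pt}$ comes out cleaner with the induction.)

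Part (iii) as you propose it has a genuine gap. Applying It\^o directly to $\|u_N\|_2^2$ produces, from the piece $u_N^2\Delta\overline{u}_N$ of $\Delta(|u_N|^2u_N)$, the drift term $2\lambda\,\mathrm{Im}\int_0^1\overline{u}_N^2(\Delta u_N)^2dx$, which is quadratic in $\Delta u_N$ and is only bounded by $C\|u_N\|_1^2\|u_N\|_2^2$ --- exactly the cross term you flag. Your proposed remedy, H\"older in expectation against the $\|u_N\|_1$-moments from (ii), does not close: $E\bigl[\|u_N\|_1^2\|u_N\|_2^2\bigr]\le\bigl(E\|u_N\|_1^{2q}\bigr)^{1/q}\bigl(E\|u_N\|_2^{2q'}\bigr)^{1/q'}$ with $q'>1$ requires moments of $\|u_N\|_2$ strictly higher than the one you are trying to bound, so the Gr\"onwall argument is circular; and the finite-dimensional inverse inequality $\|u_N\|_2\le CN^2\|u_N\|_0$ is unusable because the constant must be independent of $N$. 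The paper resolves this by not working with $\|\Delta u_N\|_0^2$ itself but with the modified functional
\begin{equation*}
f(u)=\int_0^1|\Delta u|^2dx+\lambda\,Re\int_0^1(\Delta\overline{u})|u|^2u\,dx,
\end{equation*}
which satisfies $\|\Delta u\|_0^2\le 2f(u)+C\|u\|_1^6$. The correction term is chosen precisely so that in $df(u_N)$ all contributions quadratic in $\Delta u_N$ with unbounded coefficients cancel; what remains ($\mathcal{A}_1$, $\mathcal{A}_2$, $\mathcal{A}_3$ in the appendix) is at most linear in $\Delta u_N$, so a single factor $\varepsilon\|\Delta u_N\|_0^2$ can be split off by Young and absorbed into the dissipation $-2\alpha f(u_N)$, while the residue is polynomial in $\|u_N\|_1$ and controlled by (ii). Without this cancellation (or an equivalent device) your step (iii) cannot be completed.
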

\begin{rk}
The uniform boundedness of the original solution $u$ can also be obtained in the same procedure as Proposition \ref{1norm}. As we require the global well-posedness and high regularity for both the original solution and numerical solutions to obtain the ergodicity as well as the time-independent weak error, the assumptions in this paper (see also \cite{debussche}) are stricter than that in other papers (see e.g. \cite{BD06}).
\end{rk}

\subsection{Weak error between solutions $u$ and $u_N$}\label{Weak convergence of space discretization}

Weak convergence is established for the spatial semi-discretization (\ref{spectral}) in this section utilizing a transformation of $u_N(t)$ and the corresponding Kolmogorov equation.
\begin{tm}\label{weakconvergence}
Assume that $u_0\in \dot{H}^2$ and $\|Q^{\frac12}\|_{\mathcal{HS}(L^2,\dot{H}^2)}<\infty$. For any $\phi\in C_b^2(L^2)$, there exists a constant $C=C(u_0,\phi,Q)$ independent of T, such that for any $T>0$, $$\bigg{|}E\Big{[}\phi\big{(}u_N(T)\big{)}\Big{]}-E\Big{[}\phi\big{(}u(T)\big{)}\Big{]}\bigg{|}\leq CN^{-2}.$$
\end{tm}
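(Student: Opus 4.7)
I will use the Kolmogorov equation approach, exploiting the damping $\alpha>0$ to obtain bounds uniform in $T$. Set $v(t,y):=E[\phi(u(t,y))]$ where $u(t,y)$ denotes the mild solution of \eqref{model} started from $y\in L^2$; then $v$ formally satisfies the Kolmogorov equation $\partial_t v=\mathcal{L}v$, $v(0,\cdot)=\phi$, with generator
\begin{equation*}
\mathcal{L}\psi(y)=\bigl\langle D\psi(y),\mathbf{i}\Delta y-\alpha y+\mathbf{i}\lambda|y|^{2}y\bigr\rangle+\tfrac{1}{2}\tr\bigl(D^{2}\psi(y)Q\bigr).
\end{equation*}
Applying It\^o's formula to $s\mapsto v(T-s,u_N(s))$, integrating on $[0,T]$ and taking expectation (the stochastic integral is a true martingale by the moment bounds of Proposition \ref{1norm}) yields the telescoping identity
\begin{equation*}
E[\phi(u_N(T))]-E[\phi(u(T,\pi_N u_0))]=\int_0^{T}E\bigl[(\mathcal{L}_N-\mathcal{L})v(T-s,u_N(s))\bigr]\,ds,
\end{equation*}
where $\mathcal{L}_N$ is the generator of \eqref{spectral}, so that the difference of generators reduces to the perturbations in the nonlinear drift and the covariance,
\begin{equation*}
(\mathcal{L}_N-\mathcal{L})\psi(y)=\mathbf{i}\lambda\bigl\langle D\psi(y),(\pi_N-I)(|y|^{2}y)\bigr\rangle+\tfrac{1}{2}\tr\bigl(D^{2}\psi(y)[\pi_N Q\pi_N-Q]\bigr).
\end{equation*}

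I then decompose the total error as the initial-condition mismatch $E[\phi(u(T,\pi_N u_0))]-E[\phi(u(T,u_0))]$ plus the integral above. For the initial-condition piece, the mean value theorem combined with the decay estimate $|Dv(T,y)\cdot h|\leq Ce^{-\alpha T}\|h\|_0$ (see below) and the spectral approximation bound $\|(\pi_N-I)f\|_0\leq C|\lambda_N|^{-1}\|f\|_2\leq CN^{-2}\|f\|_2$, valid for $u_0\in\dot{H}^{2}$, gives a bound $Ce^{-\alpha T}N^{-2}\|u_0\|_2$, which is $O(N^{-2})$ uniformly in $T$. For the integrand, the algebra property of $H^{2}(0,1)$ yields $\|(\pi_N-I)(|y|^{2}y)\|_0\leq CN^{-2}\|y\|_2^{3}$, while the assumption $\|Q^{1/2}\|_{\mathcal{HS}(L^2,\dot{H}^2)}<\infty$ provides $\sum_{m>N}\eta_m\leq CN^{-4}$, so that $\pi_NQ\pi_N-Q$ is trace-class of size $O(N^{-4})$. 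Combining with the uniform moment bound $E\|u_N(s)\|_2^{p}\leq C$ from Proposition \ref{1norm} and the decay estimates on $Dv$ and $D^{2}v$, the integral is at most $CN^{-2}\int_0^{T}e^{-\alpha(T-s)}\,ds\leq CN^{-2}/\alpha$, uniformly in $T$, finishing the proof once the derivative estimates are in hand.

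The main obstacle is precisely establishing these uniform-in-time derivative estimates for the Kolmogorov solution. Since $\phi\in C_b^{2}(L^2)$, they reduce to exponential decay in expectation of the stochastic Jacobian $\eta_h(t):=D_yu(t,y)\cdot h$ and the second variation $\zeta_{h,k}(t):=D_y^{2}u(t,y)\cdot(h,k)$ of the flow of \eqref{model}. The linearized equation for $\eta_h$ is a damped linear Schr\"odinger-type SPDE with potentials built from $u$; its $L^2$-energy identity contributes $-2\alpha\|\eta_h\|_0^{2}$ from the damping plus a purely nonlinear remainder of the form $-2\lambda\,\operatorname{Im}\!\int u^{2}\overline{\eta_h}^{2}dx$, which is \emph{not} pathwise smaller than $\alpha\|\eta_h\|_0^{2}$. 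Bridging this gap is the hard technical point: following the Foias-Prodi-type analysis used for the ergodicity in \cite{debussche} and exploiting the uniform high-order $\dot{H}^1$- and $\dot{H}^2$-moment bounds on $u$ (Proposition \ref{1norm} and the remark following it) to absorb the remainder in expectation, I expect to recover $E\|\eta_h(t)\|_0^{2}\leq Ce^{-2\alpha t}\|h\|_0^{2}$ and an analogous estimate for $\zeta_{h,k}$, whose quadratic source in $\eta_h,\eta_k$ is treated by the same scheme together with Cauchy-Schwarz. Once these time-uniform derivative bounds are secured, the decomposition above delivers the claimed $N^{-2}$ weak order uniformly in $T$.
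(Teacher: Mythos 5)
Your overall architecture (Kolmogorov equation for the untransformed process, telescoping It\^o identity, generator difference) is a legitimate-looking route, but it is not the paper's, and as written it has a genuine gap at exactly the point you flag as ``the hard technical point.'' Your bound on the time integral requires the decay estimates $|Dv(T-s,y)\cdot h|\le Ce^{-\alpha(T-s)}\|h\|_0$ and the analogue for $D^2v$: without that exponential factor the generator difference, which itself carries no decay in $T-s$, integrates to something of order $T\,N^{-2}$, and uniformity in $T$ is lost. But exponential decay of the stochastic Jacobian $\eta_h$ is precisely what is \emph{not} available for the damped NLS: the $L^2$-energy identity for the linearized flow produces the term $-2\lambda\,\mathrm{Im}\int u^2\overline{\eta_h}^2\,dx$, bounded only by $2\|u\|_{L^\infty}^2\|\eta_h\|_0^2$, and under the dynamics $\|u\|_{L^\infty}^2$ is not dominated by $\alpha$. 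This is the very obstruction that forces \cite{debussche} to prove ergodicity by coupling plus Foias--Prodi estimates (which control the difference of two solutions only after modifying finitely many low modes) rather than by contractivity of the linearization; your plan to ``absorb the remainder in expectation'' via Proposition \ref{1norm} gives at best $E\|\eta_h(t)\|_0^2\le\|h\|_0^2\,E\exp\bigl(\int_0^t(C\|u\|_{L^\infty}^2-2\alpha)\,ds\bigr)$, which can grow. So the proposal does not close.

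The paper sidesteps this entirely. Instead of the generator of $u$, it works with the transformed processes $Y(t)=S(T-t)u(t)$ and $Y_N(t)=S(T-t)u_N(t)$, whose drift is only the conjugated nonlinearity $H(Y)=\mathbf{i}\lambda S(T-t)\bigl(|S(t-T)Y|^2S(t-T)Y\bigr)$; the associated $v(T-t,y)=E[\phi(Y(T))|Y(t)=y]$ then needs merely \emph{bounded} (not decaying) first and second derivatives, obtained by a Gr\"onwall argument on the first variation, see \eqref{dv}--\eqref{d2v}. The time-uniformity comes from a different mechanism: every discrepancy between the two dynamics appears multiplied by $S(T-t)(\mathrm{Id}-\pi_N)$, and Lemma \ref{operator} gives $\|S(T-t)(\mathrm{Id}-\pi_N)\|_{\mathcal{L}(\dot{H}^2,L^2)}\le Ce^{-\alpha(T-t)}N^{-2}$, so the damping factor is carried by the free semigroup acting on the high modes and $\int_0^Te^{-\alpha(T-t)}\,dt\le1/\alpha$. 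To salvage your route you would need either to import this transformation or to find some other mechanism that attaches $e^{-\alpha(T-t)}$ to the perturbation rather than to $Dv$.
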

Before the proof of Theorem \ref{weakconvergence}, we give a useful lemma.
\begin{lm}\label{operator}
Assume that $S(t)$ and $\pi_N$ are defined as before. We have the following estimation
$$\|S(t)-S(t)\pi_N\|_{\mathcal{L}(\dot{H}^s,L^2)}\leq Ce^{-\alpha t}N^{-s}.$$
\end{lm}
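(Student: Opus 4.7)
The plan is to exploit the shared eigenbasis of $S(t)$ and $\pi_N$. Since both are diagonal in $\{e_n\}$, they commute, so $S(t)-S(t)\pi_N = S(t)(I-\pi_N)$. For any $u \in \dot{H}^s$, writing $u = \sum_{n\ge 1}(u,e_n)e_n$ gives
\begin{equation*}
S(t)(I-\pi_N)u \;=\; \sum_{n=N+1}^{\infty} e^{-\lambda_n t}(u,e_n)\,e_n,
\end{equation*}
and since $\mathrm{Re}(\lambda_n) = \alpha$ we have $|e^{-\lambda_n t}| = e^{-\alpha t}$ uniformly in $n$. Taking the $L^2$-norm and using orthonormality of $\{e_n\}$ yields
\begin{equation*}
\|S(t)(I-\pi_N)u\|_0^2 \;=\; e^{-2\alpha t}\sum_{n=N+1}^{\infty}|(u,e_n)|^2.
\end{equation*}

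Next I would insert the weight $|\lambda_n|^s$ and pull out its minimum on the tail:
\begin{equation*}
\sum_{n=N+1}^{\infty}|(u,e_n)|^2 \;=\; \sum_{n=N+1}^{\infty}|\lambda_n|^{-s}\,|(u,e_n)|^2\,|\lambda_n|^{s} \;\le\; |\lambda_{N+1}|^{-s}\,\|u\|_s^2.
\end{equation*}
Because $\lambda_n = \mathbf{i}(n\pi)^2+\alpha$, one has $|\lambda_{N+1}|\ge \pi^2(N+1)^2 \ge c N^2$ for some fixed $c>0$, so $|\lambda_{N+1}|^{-s}\le C N^{-2s}$. Combining the two displays and taking square roots gives
\begin{equation*}
\|S(t)(I-\pi_N)u\|_0 \;\le\; Ce^{-\alpha t} N^{-s}\|u\|_s,
\end{equation*}
and taking the supremum over $u\in\dot{H}^s\setminus\{0\}$ yields the claimed operator-norm bound.

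There is no real obstacle here: the statement reduces to a spectral tail estimate once one observes that $S(t)$ and $\pi_N$ are simultaneously diagonalizable. The only ingredients are the explicit form of the eigenvalues $\lambda_n$ (giving both the uniform damping factor $e^{-\alpha t}$ and the growth $|\lambda_n|\sim n^2$) and Parseval's identity on $L_0^2(0,1)$.
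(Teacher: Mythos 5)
Your proof is correct and is essentially the paper's own argument: both reduce to $\|S(t)(I-\pi_N)u\|_0=e^{-\alpha t}\|(I-\pi_N)u\|_0$ followed by the spectral tail estimate $\sum_{n>N}|(u,e_n)|^2\le|\lambda_{N+1}|^{-s}\|u\|_s^2$ and $|\lambda_{N+1}|\gtrsim N^2$. No substantive difference.
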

\begin{proof}

For any $u\in \dot{H}^s$, we have
\begin{align*}
&\|S(t)u-S(t)\pi_Nu\|_0=e^{-\alpha t}\|u-\pi_Nu\|_0
=e^{-\alpha t}\left(\sum_{n=N+1}^{\infty}|(u,e_n)|^2\right)^{\frac12}\\
&\leq e^{-\alpha t}|\lambda_N|^{-\frac s2}\left(\sum_{n=N+1}^{\infty}|\lambda_n|^s|(u,e_n)|^2\right)^{\frac12}
\leq Ce^{-\alpha t}N^{-s}\|u\|_s.
\end{align*}
\end{proof}
\begin{proof}[Proof of Theorem \ref{weakconvergence}]
\vspace{3mm}
We split the proof in three steps.

\textbf{Step 1.} Calculation of $E\left[\phi(u(T))\right]$.

To eliminate the unbounded Laplacian operator, we consider the modified process
$Y(t)=S(T-t)u(t)$, $t\in[0,T],$
which is the solution of the following SPDE
\begin{align}
dY(t)
&=\mathbf{i}\lambda S(T-t)\Big{[}|S(t-T)Y(t)|^2S(t-T)Y(t)\Big{]}dt+S(T-t)Q^{\frac{1}{2}}dW\nonumber\\
&:=H(Y(t))dt+S(T-t)Q^{\frac{1}{2}}dW.\nonumber
\end{align}
Denote $v(T-t,y):=E[\phi(Y(T))|Y(t)=y]$ and it follows easily
\begin{align*}
\frac{\partial v(T-t,y)}{\partial t}=-\Big{(}Dv(T-t,y),H(y)\Big{)}-\frac{1}{2}Tr\Big{[}(S(T-t)Q^{\frac{1}{2}})^*D^2v(T-t,y)S(T-t)Q^{\frac{1}{2}}\Big{]}.
\end{align*}
Note that the mild solution of $u$ has the expression $u(T)=S(T-t)u(t)+\bi\lambda\int_t^TS(T-s)|u|^2uds+\int_t^TS(T-s)Q^{\frac12}dW$. Thus, we have
\begin{align*}
v(T-t,y)=&E[\phi(Y(T))|Y(t)=y]=E[\phi(u(T))|u(t)=S(t-T)y]\\
=&E\left[\phi\left(y+\bi\lambda\int_t^TS(T-s)|u(s)|^2u(s)ds+\int_t^TS(T-s)Q^{\frac12}dW\right)\right].
\end{align*}
Similarly with \cite{BD06} (Lemma 5.13), for $h\in L^2$, 
\begin{align*}
\left(Dv(T-t,y),h\right)=E\left[\left(D\phi\left(y+\bi\lambda\int_t^TS(T-s)|u(s)|^2u(s)ds+\int_t^TS(T-s)Q^{\frac12}dW\right),\chi^h(t)\right)\right],
\end{align*}
where 
\begin{align*}
\chi^h(t)=&h+\bi\lambda\int_t^TS(T-s)(D(|u(s)|^2u(s)),\chi^h(s))ds\\
=&h+\bi\lambda\int_t^TS(T-s)\left(2|u(s)|^2\chi^h(s)+u^2(s)\overline{\chi^h(s)}\right)ds.
\end{align*}
Based on the uniform boundedness of $\|u\|_1^p$ for $p\ge1$, which can be proved in the same procedure as Proposition \ref{1norm} or \cite{debussche}, the Gr\"onwall's inequality yields $E\|\chi^h(t)\|_0\le C\|h\|_0$. Thus, it holds
\begin{align}\label{dv}
\left|\left(Dv(T-t,y),h\right)\right|\le\|\phi\|_{C_b^1}E\|\chi^h(t)\|_0\le C\|\phi\|_{C_b^1}\|h\|_0.
\end{align}
Similarly, we also have 
\begin{align}\label{d2v}
\left|\Big{(}\left(D^2v(T-t,y),h\right),h\Big{)}\right|\le C\|\phi\|_{C_b^2}\|h\|_0^2.
\end{align}
The It\^o's formula gives that
\begin{align*}
dv(T-t,Y(t))=&\frac{\partial v}{\partial t}(T-t,Y(t))dt
+\left(Dv\left(T-t,Y(t)\right),H\left(Y(t)\right)dt+S(T-t)Q^{\frac{1}{2}}dW(t)\right)\nonumber\\
&+\frac{1}{2}Tr\left[(S(T-t)Q^{\frac{1}{2}})^*D^2v\left(T-t,Y(t)\right)S(T-t)Q^{\frac{1}{2}}\right]dt\nonumber\\
=&\left(Dv(T-t,Y(t)),S(T-t)Q^{\frac{1}{2}}dW(t)\right).
\end{align*}
Therefore,
\begin{equation}\label{eqexact}
v(0,Y(T))=v(T,Y(0))+\int_0^T\left(Dv(T-s,Y(s)),S(T-s)Q^{\frac{1}{2}}dW(s)\right).
\end{equation}
Noticing that $Y(0)=S(T)u_0$ and $Y(T)=u(T)$, we recall $v(T-t,y)=E[\phi(Y(T))|Y(t)=y]$ to derive
\begin{align*}
v(0,Y(T))
=E\left[\phi(u(T))|Y(T)=u(T)\right]
\end{align*}
and
\begin{align*}
v(T,Y(0))=&E\left[\phi(Y(T))|Y(0)=S(T)u_0\right]\nonumber\\
=&E\left[\phi\Big{(}S(T)u_0+\int_0^TH(Y(t))dt+S(T-t)Q^{\frac{1}{2}}dW(t)\Big{)}\Big{|}Y(0)=S(T)u_0\right].\nonumber
\end{align*}
Take expectation of both sides of (\ref{eqexact}) and we have
\begin{equation}\label{exact}
E[\phi(u(T))]=E\left[\phi\Big{(}S(T)u_0+\int_0^TH(Y(t))dt+S(T-t)Q^{\frac{1}{2}}dW(t)\Big{)}\right].\qquad
\end{equation}

\textbf{Step 2.} Calculation of $E\left[\phi(u_N(T))\right]$.

The mild solution of (\ref{spectral}) is
$$u_N(t)=S(t)\pi_Nu_0+\mathbf{i}\lambda\int_0^tS(t-s)\pi_N\left(|u_N(s)|^2u_N(s)\right)ds+\int_0^tS(t-s)\pi_NQ^{\frac{1}{2}}dW(s).$$
Using similar argument as above, we consider the following stochastic process:
$$Y_N(t)=S(T-t)u_N(t).$$
The relevant SDE is
\begin{align*}
dY_N(t)&=\mathbf{i}\lambda S(T-t)\pi_N\Big{[}|S(t-T)Y_N(t)|^2S(t-T)Y_N(t)\Big{]}dt+S(T-t)\pi_NQ^{\frac{1}{2}}dW\\
&:=H_{N}(Y_N(t))dt+S(T-t)\pi_NQ^{\frac{1}{2}}dW(t).
\end{align*}
Apply It\^{o}'s formula to $t\to v(T-t,Y_N(t))$ and we get
\begin{align*}
dv(T-t,Y_N(t))=&\frac{\partial v}{\partial t}(T-t,Y_N(t))dt\nonumber\\
&+\left(Dv(T-t,Y_N(t)),H_N(Y_N(t))dt+S(T-t)\pi_NQ^{\frac{1}{2}}dW(t)\right)\nonumber\\
&+\frac{1}{2}Tr\Big{[}(S(T-t)\pi_NQ^{\frac{1}{2}})^*D^2v(T-t,Y_N(t))S(T-t)\pi_NQ^{\frac{1}{2}}\Big{]}dt\nonumber\\
=&\left(Dv(T-t,Y_N(t)),S(T-t)\pi_NQ^{\frac{1}{2}}dW(t)\right)\nonumber\\
&+\Big{(}Dv(T-t,Y_N(t)),H_N\left(Y_N(t)\right)-H\left(Y_N(t)\right)\Big{)}dt\nonumber\\
&-\frac{1}{2}Tr\Big{[}(S(T-t)Q^{\frac{1}{2}})^*D^2v(T-t,Y_N(t))S(T-t)Q^{\frac{1}{2}}\Big{]}dt\nonumber\\
&+\frac{1}{2}Tr\Big{[}(S(T-t)\pi_NQ^{\frac{1}{2}})^*D^2v(T-t,Y_N(t))S(T-t)\pi_NQ^{\frac{1}{2}}\Big{]}dt.\nonumber
\end{align*}
Therefore,
\begin{align}\label{eqnum}
v(0,Y_N(T))=&v(T,Y_N(0))+\int_0^T\left(Dv(T-s,Y_N(s)),S(T-s)\pi_NQ^{\frac{1}{2}}dW(s)\right)\nonumber\\
&+\int_0^T\Big{(}Dv\big{(}T-t,Y_N(t)\big{)},H_N\big{(}Y_N(t)\big{)}-H\big{(}Y_N(t)\big{)}\Big{)}dt\nonumber\\
&+\frac{1}{2}\int_0^TTr\left[(S(T-t)\pi_NQ^{\frac{1}{2}})^*D^2v(T-t,Y_N(t))S(T-t)\pi_NQ^{\frac{1}{2}}\right]dt\nonumber\\
&-\frac{1}{2}\int_0^TTr\left[(S(T-t)Q^{\frac{1}{2}})^*D^2v(T-t,Y_N(t))S(T-t)Q^{\frac{1}{2}}\right]dt.
\end{align}
By the construction of $Y_N$, we can check that
$$Y_N(0)=S(T)\pi_Nu_0\;\;\;\text{and}\;\;\;Y_N(T)=u_N(T).$$
According to the representation of $v$, we have
\begin{align*}
v(0,Y_N(T))=E\left[\phi(Y(T))|Y(T)=Y_N(T)\right]
=E\left[\phi(u_N(T))|Y(T)=Y_N(T)\right]
\end{align*}
and
\begin{align*}
v(T,Y_N(0))=&E\left[\phi(Y(T))|Y(0)=S(T)\pi_Nu_0\right]\nonumber\\
=&E\left[\phi\Big{(}S(T)\pi_Nu_0+\int_0^TH(Y(t))dt+S(T-t)Q^{\frac{1}{2}}dW(t)\Big{)}\Big{|}Y(0)=S(T)\pi_Nu_0\right].\;\nonumber
\end{align*}
Take expectation of the two sides of (\ref{eqnum}) and we get
\begin{align}\label{num}
E\left[\phi(u_N(T))\right]=&E\left[\phi\Big{(}S(T)\pi_Nu_0+\int_0^TH(Y(t))dt+S(T-t)Q^{\frac{1}{2}}dW(t)\Big{)}\right]\nonumber\\
&+E\int_0^T\Big{(}Dv\big{(}T-t,Y_N(t)\big{)},H_N\big{(}Y_N(t)\big{)}-H\big{(}Y_N(t)\big{)}\Big{)}dt\nonumber\\
&+\frac{1}{2}E\int_0^T\Bigg{\{}Tr\left[(S(T-t)\pi_NQ^{\frac{1}{2}})^*D^2v(T-t,Y_N(t))S(T-t)\pi_NQ^{\frac{1}{2}}\right]\nonumber\\
&-Tr\left[(S(T-t)Q^{\frac{1}{2}})^*D^2v(T-t,Y_N(t))S(T-t)Q^{\frac{1}{2}}\right]\Bigg{\}}dt.
\end{align}


\textbf{Step 3.} Weak error of the solutions.

Subtracting the resulting equations (\ref{exact}) and (\ref{num}) leads to
\begin{align}\label{error}
&E\left[\phi(u_N(T))\right]-E\left[\phi(u(T))\right]\nonumber\\
=&E\bigg{[}\phi\Big{(}S(T)\pi_Nu_0+\int_0^TH(Y(t))dt+S(T-t)Q^{\frac{1}{2}}dW(t)\Big{)}\nonumber\\
&-\phi\Big{(}S(T)u_0+\int_0^TH(Y(t))dt+S(T-t)Q^{\frac{1}{2}}dW(t)\Big{)}\bigg{]}\nonumber\\
&+E\int_0^T\Big{(}Dv\big{(}T-t,Y_N(t)\big{)},H_N\big{(}Y_N(t)\big{)}-H\big{(}Y_N(t)\big{)}\Big{)}dt\nonumber\\
&+\frac{1}{2}E\int_0^T\Bigg{\{}Tr\left[(S(T-t)\pi_NQ^{\frac{1}{2}})^*D^2v(T-t,Y_N(t))S(T-t)\pi_NQ^{\frac{1}{2}}\right]\nonumber\\
&-Tr\left[(S(T-t)Q^{\frac{1}{2}})^*D^2v(T-t,Y_N(t))S(T-t)Q^{\frac{1}{2}}\right]\Bigg{\}}dt\nonumber\\
:=&\uppercase\expandafter{\romannumeral1}+\uppercase\expandafter{\romannumeral2}+\uppercase\expandafter{\romannumeral3}.
\end{align} 
Due to Lemma \ref{operator}, terms $\uppercase\expandafter{\romannumeral1}$ and $\uppercase\expandafter{\romannumeral2}$ can be estimated as
\begin{align}\label{1}
\left|\uppercase\expandafter{\romannumeral1}\right|\leq C\left\|\phi\right\|_{C_b^1}E\left\|S(T)u_0-S(T)\pi_Nu_0\right\|_0
\leq Ce^{-\alpha T}\|\phi\|_{C_b^1}E\|u_0\|_2N^{-2}
\leq Ce^{-\alpha T}N^{-2},
\end{align}
and
\begin{align}\label{2}
|\uppercase\expandafter{\romannumeral2}|\leq&C E\int_0^T\|\phi\|_{C_b^1}\|H_N(Y_N(t))-H(Y_N(t))\|_0dt\nonumber\\
=&C E\int_0^T\|\phi\|_{C_b^1}\|\mathbf{i}\lambda S(T-t)(Id-\pi_N)\big{(}|u_N(t)|^2u_N(t)\big{)}\|_0dt\nonumber\\
\leq&|\lambda|C\int_0^Te^{-\alpha(T-t)}\|\phi\|_{C_b^1}E\Big{[}\|u_N(t)\|_1^2\|u_N(t)\|_2\Big{]}N^{-2}dt\nonumber\\
\leq&|\lambda|\frac C{\alpha}N^{-2}
\end{align}
based on Lemma \ref{operator}, Proposition \ref{1norm}  and the embedding $H^1\hookrightarrow L^{\infty}$ in $\mathbb{R}$.
In the first step of \eqref{2}, we have used the fact \eqref{dv}.

Let us now estimate term $\uppercase\expandafter{\romannumeral3}$.
As $(S(T-t)\pi_N-S(T-t))Q^{\frac{1}{2}}$ is a bounded linear operator and so is $D^2v$ shown in \eqref{d2v}, we have
\begin{align*}
&\bigg{|}Tr\left[(S(T-t)\pi_NQ^{\frac{1}{2}})^*D^2v(T-t,Y_N(t))S(T-t)\pi_NQ^{\frac{1}{2}}\right]\\
&-Tr\left[(S(T-t)Q^{\frac{1}{2}})^*D^2v(T-t,Y_N(t))S(T-t)Q^{\frac{1}{2}}\right]\bigg{|}\\
=&\left|Tr\left[((S(T-t)\pi_N-S(T-t))Q^{\frac{1}{2}})^*D^2v(T-t,Y_N(t))(S(T-t)\pi_N+S(T-t))Q^{\frac{1}{2}}\right]\right|\\
\leq&C\|S(T-t)\pi_N-S(T-t)\|_{\mathcal{L}(\dot{H}^2,L^2)}\|Q^{\frac12}\|_{\mathcal{HS}(L^2,\dot{H}^2)}\|\phi\|_{C_b^2}\|S(T-t)\|_{\mathcal{L}(L^2,L^2)}\|Q^{\frac12}\|_{\mathcal{HS}(L^2,L^2)}\\
\leq& Ce^{-{\alpha}(T-t)}N^{-2}.
\end{align*}
Hence, integrating above equation leads to
\begin{equation}\label{3}
|\uppercase\expandafter{\romannumeral3}|
\leq \frac C{\alpha}N^{-2}.
\end{equation}
Plugging (\ref{1}), (\ref{2}) and (\ref{3}) into (\ref{error}), we get
\begin{equation}\label{ch2}
\bigg{|}E\Big{[}\phi\big{(}u_N(T)\big{)}\Big{]}-E\Big{[}\phi\big{(}u(T)\big{)}\Big{]}\bigg{|}\leq C(e^{-\alpha T}+\frac1{\alpha})N^{-2}\le CN^{-2},
\end{equation}
in which, $C$ is independent of time $T$.
\end{proof}
\subsection{Convergence order between invariant measures $\mu$ and $\mu_N$}
By the ergodicity of stochastic processes $u$ and $u_N$, for any deterministic $u_0\in \dot{H}^2$, we have
\begin{align}
\lim_{T\to\infty}\frac{1}{T}\int_0^TE\phi\big{(}u(t)\big{)}dt=\int_{L^2}\phi(y)d\mu(y)
\end{align}
and
\begin{align}
\lim_{T\to\infty}\frac{1}{T}\int_0^TE\phi\big{(}u_N(t)\big{)}dt=\int_{V_N}\phi(y)d\mu_N(y)
\end{align}
for any $\phi\in C_b^2(L^2)$.
Based on the time-independence of the weak error in Theorem \ref{weakconvergence}, it turns out for any fixed $\alpha$ and $N$, 
\begin{align*}
&\left|\int_{L^2}\phi(y)d\mu(y)-\int_{V_N}\phi(y)d\mu_N(y)\right|
=\left|\lim_{T\to\infty}\frac{1}{T}\int_0^TE\phi\big{(}u(t)\big{)}-E\phi\big{(}u_N(t)\big{)}dt\right|\\
\le&\lim_{T\to\infty}\frac{1}{T}\int_0^T\left|E\phi\big{(}u(t)\big{)}-E\phi\big{(}u_N(t)\big{)}\right|dt
\le\lim_{T\to\infty}\frac{1}{T}\int_0^TC(e^{-\alpha t}+\frac1{\alpha})N^{-2}dt\le \frac C{\alpha}N^{-2},
\end{align*}
which implies that $\mu_N$ is a proper approximation of $\mu$. Thus, we give the following theorem.

\begin{tm}
Assume that $u_0\in\dot{H}^2$ and $\|Q^{\frac12}\|_{\mathcal{HS}(L^2,\dot{H}^3)}<\infty$. The error between invariant measures $\mu$ and $\mu_N$ is of order $2$, i.e.,
\begin{align*}
\left|\int_{L^2}\phi(y)d\mu(y)-\int_{V_N}\phi(y)d\mu_N(y)\right|<\frac C{\alpha}N^{-2}.
\end{align*}
\end{tm}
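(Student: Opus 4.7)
My plan is to bypass any direct analysis of $\mu$ and $\mu_N$ and instead identify both invariant-measure integrals with long-time Ces\`aro averages of expectations of $\phi$. The ergodicity of $u$ (from the Debussche theorem cited in Section~2) and of $u_N$ (Theorem~\ref{spaceergodic}), together with the expectation form of the ergodic identity (\ref{df2}), yield
\begin{equation*}
\int_{L^2}\phi\,d\mu=\lim_{T\to\infty}\frac{1}{T}\int_0^T E[\phi(u(t))]\,dt,\qquad
\int_{V_N}\phi\,d\mu_N=\lim_{T\to\infty}\frac{1}{T}\int_0^T E[\phi(u_N(t))]\,dt,
\end{equation*}
for an arbitrary deterministic initial datum $u_0\in\dot{H}^2$, where the two processes are started from $u_0$ and $\pi_N u_0$ respectively. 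Note that the limits exist and are independent of the starting point by unique ergodicity.

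Next, I would subtract the two representations, push the absolute value inside the time average via the triangle inequality, and invoke the pointwise-in-$t$ weak error estimate obtained inside the proof of Theorem~\ref{weakconvergence} (see (\ref{ch2})), namely
\begin{equation*}
\bigl|E\phi(u_N(t))-E\phi(u(t))\bigr|\le C\bigl(e^{-\alpha t}+\alpha^{-1}\bigr)N^{-2},
\end{equation*}
with $C$ independent of $t$. Substituting this bound and performing the Ces\`aro average, the contribution of $e^{-\alpha t}$ is $O(1/T)$ and vanishes in the limit $T\to\infty$, while the constant $\alpha^{-1}$ piece survives, producing the required bound $(C/\alpha)N^{-2}$.

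\textbf{Main obstacle.} There is essentially no new technical work required: all the heavy lifting (unique ergodicity of $u_N$, the Kolmogorov-equation weak error calculation, uniform-in-time moment estimates) has already been assembled in Theorems~\ref{spaceergodic} and~\ref{weakconvergence} and Proposition~\ref{1norm}. The single conceptual point to guard against is that the weak-error constant in Theorem~\ref{weakconvergence} must be \emph{genuinely independent of the terminal time $T$}, so that the integrand in the Ces\`aro average is controlled uniformly by $C(e^{-\alpha t}+\alpha^{-1})N^{-2}$ and averaging then sending $T\to\infty$ yields a finite limit rather than a quantity that blows up. This time-uniformity rests in turn on the exponential decay of $S(t)$ provided by the damping $\alpha>0$ and on the uniform-in-time $\dot{H}^1$- and $\dot{H}^2$-moment bounds of Proposition~\ref{1norm}; once these are in hand, the conclusion is a one-line computation.
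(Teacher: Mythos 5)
Your proposal is correct and follows essentially the same route as the paper: both identify the two invariant-measure integrals with Ces\`aro averages of $E[\phi(u(t))]$ and $E[\phi(u_N(t))]$ via unique ergodicity, and then apply the time-uniform weak error bound $C(e^{-\alpha t}+\alpha^{-1})N^{-2}$ from the proof of Theorem~\ref{weakconvergence}, with the $e^{-\alpha t}$ part vanishing in the average. You also correctly flag the one essential point, namely the $T$-independence of the weak-error constant (and, implicitly, the role of the extra assumption $\|Q^{\frac12}\|_{\mathcal{HS}(L^2,\dot{H}^3)}<\infty$ in guaranteeing the unique ergodicity of $u$ itself).
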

\begin{rk}
Although the time-independent weak error between $u$ and $u_N$ is obtained under the assumption $\|Q^{\frac12}\|_{\mathcal{HS}(L^2,\dot{H}^2)}<\infty$, it is necessary to assume in addition $\|Q^{\frac12}\|_{\mathcal{HS}(L^2,\dot{H}^3)}<\infty$ in order to get the unique ergodicity of $u$ (see \cite{debussche}).
\end{rk}

\vspace{5mm}
\section{\textsc{\Large{F}ull discretization}}
In this section, 
 we discretize (\ref{spectral}) in temporal direction by a modification of the implicit Euler scheme to get a fully discrete scheme. We prove the ergodicity of the numerical solution $u_N^k$ of the fully discrete scheme, and get weak order $\frac{1}{2}$ of $u_N^k$ in temporal direction. 
 Thus, we achieve at least the same order as the weak error for the error of invariant measure, as a result of the time-independency of the weak error and the ergodicity of the solution.
\subsection{Fully discrete scheme}
We use a modified implicit Euler scheme to approximate (\ref{spectral}), and obtain the following scheme
\begin{equation}\label{BE}
\left\{
\begin{aligned}
&u_N^{k}-e^{-\alpha\tau}u_N^{k-1}=\left(\mathbf{i}\Delta u_N^{k}+\mathbf{i}\lambda\pi_N\left(\frac{|u_N^{k}|^2+|e^{-\alpha\tau}u_N^{k-1}|^2}{2}u_N^{k}\right)\right)\tau+\pi_NQ^{\frac{1}{2}}\delta W_{k}\\
&u_N^0=\pi_Nu_0(x),
\end{aligned}
\right.
\end{equation}
where $u_N^k$ is an approximation of $u_N(t_k)$, $\tau$ represents the uniform time step, $t_k=k\tau$, and $\delta W_{k}=W(t_{k})-W(t_{k-1})$.

The well-posedness of scheme (\ref{BE}), together with the uniform boundedness of the numerical solution, is stated in the following proposition. The time step $\tau$ is assumed to satisfy $\alpha\tau\in[0,1]$ in sequel.

\begin{prop}\label{ju}
Assume $u_0\in \dot{H}^0$. For sufficiently small $\tau$, there uniquely exists a family of $V_N$-valued and $\{\mathcal{F}_{t_k}\}_{k\in\mathbb{N}}$-adapted solutions $\{u_N^k\}_{k\in\mathbb{N}}$ of (\ref{BE}),  which satisfies that for any integer $p\geq 2$, there exists a constant $C=C(p,\alpha,u_N^0)>0$, such that
$$E\|u_N^k\|_0^p\leq C,\;\;\forall~k\in\mathbb{N}.$$
\end{prop}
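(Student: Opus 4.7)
The plan splits Proposition~\ref{ju} into three parts: existence of $u_N^k$ as a fixed point in $V_N$ (for each $\omega$), uniqueness together with $\mathcal{F}_{t_k}$-adaptedness, and the uniform $L^p$-bound via a Lyapunov-type identity that uses the symmetric form of the cubic in~(\ref{BE}). For existence, fix $\omega$ and rewrite~(\ref{BE}) as $F(v)=0$ on $V_N$ with
$$F(v):=v-\mathbf{i}\tau\Delta v-\mathbf{i}\tau\lambda\pi_N\!\left(\tfrac{|v|^2+|e^{-\alpha\tau}u_N^{k-1}|^2}{2}v\right)-e^{-\alpha\tau}u_N^{k-1}-\pi_NQ^{\frac{1}{2}}\delta W_k.$$
Taking the real $L^2$ pairing $\mathrm{Re}(F(v),v)$ annihilates both $\mathbf{i}\Delta v$ (skew-adjoint under Dirichlet data) and the cubic (its multiplier is real, $\mathbf{i}\lambda$ is purely imaginary, and $v\in V_N$ renders $\pi_N$ transparent), so
$$\mathrm{Re}(F(v),v)=\|v\|_0^2-\mathrm{Re}\bigl(e^{-\alpha\tau}u_N^{k-1}+\pi_NQ^{\frac{1}{2}}\delta W_k,\,v\bigr),$$
which is strictly positive on a sphere of radius $R(\omega)$ in $V_N$; the finite-dimensional Brouwer fixed-point theorem then produces a root. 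Uniqueness for $\tau$ sufficiently small follows by subtracting two candidates, pairing with their difference, and using local Lipschitzness of the cubic on $V_N$ (where all norms are equivalent). Adaptedness is inherited by induction from measurable dependence of the unique root on $(u_N^{k-1},\delta W_k)$.

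For the $L^p$-bound, I would take the real $L^2$ pairing of~(\ref{BE}) with $u_N^k$ and apply the identity $2\mathrm{Re}(a,a-b)=\|a\|_0^2-\|b\|_0^2+\|a-b\|_0^2$ with $a=u_N^k$, $b=e^{-\alpha\tau}u_N^{k-1}$; substituting the scheme back for $u_N^k-e^{-\alpha\tau}u_N^{k-1}$ inside $\|a-b\|_0^2$ and invoking the same two cancellations produces the key identity
\begin{equation*}
\|u_N^k\|_0^2+\bigl\|\mathbf{i}\tau\Delta u_N^k+\mathbf{i}\tau\lambda\pi_N\!\bigl(\tfrac{|u_N^k|^2+|e^{-\alpha\tau}u_N^{k-1}|^2}{2}u_N^k\bigr)\bigr\|_0^2=e^{-2\alpha\tau}\|u_N^{k-1}\|_0^2+2\mathrm{Re}\bigl(e^{-\alpha\tau}u_N^{k-1},\pi_NQ^{\frac{1}{2}}\delta W_k\bigr)+\|\pi_NQ^{\frac{1}{2}}\delta W_k\|_0^2.
\end{equation*}
Since $u_N^{k-1}$ is $\mathcal{F}_{t_{k-1}}$-measurable and $\delta W_k$ is independent, taking expectation erases the cross term and yields $E\|u_N^k\|_0^2\le e^{-2\alpha\tau}E\|u_N^{k-1}\|_0^2+\tau\,\mathrm{tr}(\pi_NQ)$; iterating and using $1-e^{-2\alpha\tau}\ge\tfrac{2\alpha\tau}{3}$ (valid for $\alpha\tau\in[0,1]$) gives the $p=2$ bound uniformly in $k$. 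For general $p\ge2$, I would raise the pointwise inequality $\|u_N^k\|_0^2\le e^{-2\alpha\tau}\|u_N^{k-1}\|_0^2+R_k$ (where $R_k$ collects the two stochastic terms on the right) to the power $p/2$, expand binomially, take conditional expectation on $\mathcal{F}_{t_{k-1}}$ (under which $\delta W_k$ is a centered Gaussian independent of $u_N^{k-1}$), and use Young's inequality with $\tau$-dependent weights so that the recursion reads $E\|u_N^k\|_0^p\le(1-c_p\tau)E\|u_N^{k-1}\|_0^p+C_p\tau$; a discrete Gr\"onwall argument then closes.

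The main obstacle is the $p>2$ step. The $p=2$ case closes in a single line because expectation erases the stochastic cross term, but for $p>2$ the binomial expansion produces mixed terms pairing powers of $\|u_N^{k-1}\|_0$ with Gaussian moments of $\delta W_k$ of orders $\tau^{j/2}$, and the Young weights must be chosen as careful functions of $\tau$ so that the effective contraction factor $e^{-p\alpha\tau}$ survives the iteration without the constants $C_p$ blowing up as $\tau\to0$. The damping assumption $\alpha>0$ is essential here: it is precisely what permits the geometric-series bound uniform in $k$, rather than a bound growing linearly in $k\tau$.
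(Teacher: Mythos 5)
Your proposal is correct and rests on the same two pillars as the paper's proof: a Brouwer fixed-point argument on the finite-dimensional space $V_N$ for existence (the paper derives the a priori bound $\|\tilde u_N^k\|_0^2\le 2e^{-2\alpha\tau}\|\tilde u_N^{k-1}\|_0^2+2\tau\|g_k\|_0^2$ from the same two cancellations you use and then invokes Brouwer, handling adaptedness via a measurable selection through the closed graph theorem rather than via uniqueness), and the energy recursion obtained by pairing the scheme with $u_N^k$. Where you genuinely diverge is in the treatment of the stochastic cross term and of higher moments. The paper keeps $2\mathrm{Re}\,E\int_0^1\overline u_N^k\,\pi_NQ^{1/2}\delta W_k\,dx$, splits $\overline u_N^k=(\overline u_N^k-e^{-\alpha\tau}\overline u_N^{k-1})+e^{-\alpha\tau}\overline u_N^{k-1}$, drops the second piece under expectation and absorbs the first by Young's inequality into $\|u_N^k-e^{-\alpha\tau}u_N^{k-1}\|_0^2$; it then climbs a ladder $p=2\to4\to3\to\cdots$, multiplying the $p=2$ identity by $\|u_N^k\|_0^2$ and interpolating for odd $p$. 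Your substitution of the scheme back into $\|u_N^k-e^{-\alpha\tau}u_N^{k-1}\|_0^2$ yields an exact identity (which I checked: the mixed term $2\mathrm{Re}(D,G)$ between the drift increment and the noise cancels) whose stochastic term pairs the noise only against the $\mathcal{F}_{t_{k-1}}$-measurable $u_N^{k-1}$, hence is a true martingale increment; this buys a single pointwise recursion $\|u_N^k\|_0^2\le e^{-2\alpha\tau}\|u_N^{k-1}\|_0^2+R_k$ that can be raised to the power $p/2$ and closed by conditioning plus Young uniformly in $p$ --- a tidier organization than the paper's. Two small repairs are needed, neither a genuine gap: for odd $p$ the phrase ``expand binomially'' does not literally apply since $p/2\notin\mathbb{N}$ and $R_k$ may be negative, so either restrict to even $p$ and interpolate (exactly as the paper does via $E\|u\|_0^3\le\frac12(E\|u\|_0^2+E\|u\|_0^4)$) or use an elementary expansion valid for $x\ge0$, $x+y\ge0$; and your uniqueness step carries the same caveat as the paper's appendix, namely that the Lipschitz constant of the cubic on $V_N$ depends on the size of the two candidate solutions, so ``$\tau$ sufficiently small'' must be read after the a priori bound from the fixed-point step has been established.
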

\begin{proof}

\textbf{Step 1.} Existence and uniqueness of solution.

Similar to \cite{debouard}, we fix a family $\{g_k\}_{k\in\mathbb{N}}$ of deterministic functions in $V_N$. We also fix $\tilde{u}_N^{k-1}\in V_N$, the existence of solution $\tilde{u}_N^{k}\in V_N$ of
\begin{equation}\label{modify}
\tilde{u}_N^{k}-e^{-\alpha\tau}\tilde{u}_N^{k-1}=\mathbf{i}\tau\Delta \tilde{u}_N^{k}+\mathbf{i}\lambda\tau\pi_N\left(\frac{|\tilde{u}_N^{k}|^2+|e^{-\alpha\tau}\tilde{u}_N^{k-1}|^2}{2}\tilde{u}_N^{k}\right)+\sqrt{\tau}g_{k}
\end{equation}
can be proved by using Brouwer fixed point theorem. Indeed,
multiplying (\ref{modify}) by $\overline{\tilde{u}}_N^{k}$, integrating with respect to $x$ and taking the real part, we get
\begin{align*}
&\|\tilde{u}_N^{k}\|_0^2+\|\tilde{u}_N^{k}-e^{-\alpha\tau}\tilde{u}_N^{k-1}\|_0^2-e^{-2\alpha\tau}\|\tilde{u}_N^{k-1}\|_0^2\\
=&2\sqrt{\tau}Re\left[\int_0^1(\overline{\tilde{u}}_N^{k}-e^{-\alpha\tau}\overline{\tilde{u}}_N^{k-1})g_{k}dx+\int_0^1(e^{-\alpha\tau}\overline{\tilde{u}}_N^{k-1})g_{k}dx\right]\\
\leq&\|\tilde{u}_N^{k}-e^{-\alpha\tau}\tilde{u}_N^{k-1}\|_0^2+e^{-2\alpha\tau}\|\tilde{u}_N^{k-1}\|_0^2+2\tau\|g_{k}\|_0^2.
\end{align*}
Thus, 
\begin{align}\label{bound}
\|\tilde{u}_N^{k}\|_0^2\leq 2e^{-2\alpha\tau}\|\tilde{u}_N^{k-1}\|_0^2+2\tau\|g_k\|_0^2.
\end{align}
Define
\begin{align*}
\Lambda: V_N\times V_N&\rightarrow\mathcal{P}(L^2),\\
(\tilde{u}_N^{k-1},g_{k})&\mapsto \{\tilde{u}_N^{k}|\tilde{u}_N^{k}~\text{are~solutions~of}~(\ref{modify})\},
\end{align*}
where $\mathcal{P}(L^2)$ is the power set of $L^2$.
(\ref{bound}) implies that $\Lambda$ is continuous, and its graph is closed by the closed graph theorem.
When the spaces are endowed with their Borel $\sigma$-algebras, there is a measurable continuous function $\kappa: V_N\times V_N\rightarrow L^2$ such that
$$\kappa(u,g)\in\Lambda(u,g),~\forall~(u,g)\in V_N\times V_N.$$
Assume that $u_N^{k-1}\in V_N$ is a $\mathcal{F}_{t_{k-1}}$-measurable random variable, then $u_N^k=\kappa(u_N^{k-1},\frac{\pi_NQ^{\frac12}\delta W_{k}}{\sqrt{\tau}})$ is an $L^2$-valued solution of (\ref{BE}). Moreover, 
\begin{align*}
(1-\mathbf{i}\Delta\tau) u_N^{k}=e^{-\alpha\tau}u_N^{k-1}+\mathbf{i}\lambda\tau\pi_N\left(\frac{|u_N^{k}|^2+|e^{-\alpha\tau}u_N^{k-1}|^2}{2}u_N^{k}\right)+\pi_NQ^{\frac{1}{2}}\delta W_{k}\in V_N.
\end{align*}
Hence, $u_N^k$ is actually a $V_N$-valued solution of (\ref{BE}).

For any given $u_N^{k-1}$ and sufficiently small time step $\tau$, the solution $u_N^k$ is unique, which can be proved in a similar procedure as \cite{akrivis}. This fact will be used in proving the ergodicity of the numerical solution $\{u_N^k\}_{k\in\mathbb{N}}$, and it can be found in appendix \ref{5.2}.

\textbf{Step 2.} Boundedness of the $p$-moments.

The constants $C$ below may be different, but do not depend on time.

\romannumeral1) $p=2$. To show the boundedness, we multiply (\ref{BE}) by $\overline{u}_N^{k}$, integrate in [0,1] with respect to the space variable, take expectation and take the real part,

\begin{align}
&E\|u_N^{k}\|_0^2+E\|u_N^{k}-e^{-\alpha\tau}u_N^{k-1}\|_0^2-e^{-2\alpha\tau}E\|u_N^{k-1}\|_0^2\nonumber\\
=&2ReE\int_0^1\overline{u}_N^{k}\pi_NQ^{\frac12}\delta W_{k}dx\nonumber\\
=&2ReE\int_0^1\big{(}\overline{u}_N^{k}-e^{-\alpha\tau}\overline{u}_N^{k-1}\big{)}\pi_NQ^{\frac12}\delta W_{k}dx\nonumber\\
\leq&E\|u_N^{k}-e^{-\alpha\tau}u_N^{k-1}\|_0^2+E\|\pi_NQ^{\frac12}\delta W_{k}\|_0^2.\nonumber
\end{align}
It derives
\begin{align*}
E\|u_N^{k}\|_0^2\leq& e^{-2\alpha\tau}E\|u_N^{k-1}\|_0^2+C\tau\\
\leq&e^{-2\alpha\tau k}E\|u_N^0\|_0^2+C\tau(1+e^{-2\alpha\tau}+\cdots+e^{-2\alpha\tau(k-1)})\\
\leq&e^{-2\alpha t_k}E\|u_N^0\|_0^2+\frac{C\tau}{1-e^{-2\alpha\tau}}\\
\leq &E\|u_N^0\|_0^2+\frac{C}{e^{-1}2\alpha}
\end{align*}
for $\tau<\frac1{\alpha}$, where we have used $e^{-2\alpha\tau}<1-e^{-1}2\alpha\tau$ for $\tau<\frac1{\alpha}$.

\romannumeral2) $p=4$. In the case when p=2, without taking expectation, we have
$$\|u_N^{k}\|_0^2-e^{-2\alpha\tau}\|u_N^{k-1}\|_0^2+\|u_N^{k}-e^{-\alpha\tau}u_N^{k-1}\|_0^2=2Re\int_0^1\overline{u}_N^{k}\pi_NQ^{\frac12}\delta W_{k}dx.$$
Multiply both sides by $\|u_N^{k}\|_0^2$, take expectation and take the real part and we get
\begin{align*}
(LHS)=&E\|u_N^{k}\|_0^4-e^{-2\alpha\tau}E\|u_N^{k-1}\|_0^2\|u_N^{k}\|_0^2+E\Big{[}\|u_N^{k}-e^{-\alpha\tau}u_N^{k-1}\|_0^2\|u_N^{k}\|_0^2\Big{]}\nonumber\\
=&\frac12\Big{(}E\|u_N^{k}\|_0^4-e^{-4\alpha\tau}E\|u_N^{k-1}\|_0^4\Big{)}+\frac12E\Big{(}\|u_N^{k}\|_0^2-e^{-2\alpha\tau}\|u_N^{k-1}\|_0^2\Big{)}^2\nonumber\\
&+E\Big{[}\|u_N^{k}-e^{-2\alpha\tau}u_N^{k-1}\|_0^2\|u_N^{k}\|_0^2\Big{]}
\end{align*}
and
\begin{align*}
(RHS)=&2ReE\int_0^1\|u_N^{k}\|_0^2\overline{u}_N^{k}\pi_NQ^{\frac12}\delta W_{k}dx\nonumber\\
=&2Re E\int_0^1\left(\|u_N^{k}\|_0^2\big{(}\overline{u}_N^{k}-e^{-\alpha\tau}\overline{u}_N^{k-1}\big{)}\right)\pi_NQ^{\frac12}\delta W_{k}dx\nonumber\\
&+2ReE\int_0^1\Big{(}\big{(}\|u_N^{k}\|_0^2-e^{-2\alpha\tau}\|u_N^{k-1}\|_0^2\big{)}e^{-\alpha\tau}\overline{u}_N^{k-1}\Big{)}\pi_NQ^{\frac12}\delta W_{k}dx\nonumber\\
\leq&E\Big{[}\|u_N^{k}-e^{-\alpha\tau}u_N^{k-1}\|_0^2\|u_N^{k}\|_0^2\Big{]}+E\Big{(}\|u_N^{k}\|_0^2\|\pi_NQ^{\frac12}\delta W_{k}\|_0^2\Big{)}\nonumber\\
&+\frac14E\Big{(}\|u_N^{k}\|_0^2-e^{-2\alpha\tau}\|u_N^{k-1}\|_0^2\Big{)}^2+4e^{-2\alpha\tau}E\|\overline{u}_N^{k-1}\pi_NQ^{\frac12}\delta W_{k}\|_0^2\nonumber\\
\leq&E\Big{[}\|u_N^{k}-e^{-\alpha\tau}u_N^{k-1}\|_0^2\|u_N^{k}\|_0^2\Big{]}
+\frac12E\Big{(}\|u_N^{k}\|_0^2-e^{-2\alpha\tau}\|u_N^{k-1}\|_0^2\Big{)}^2+C\tau.\nonumber
\end{align*}
Compare (LHS) with (RHS), we obtain
$$E\|u_N^{k}\|_0^4\leq e^{-4\alpha\tau}E\|u_N^{k-1}\|_0^4+C\tau\leq C.$$

\romannumeral3) $p=3$. Using 1) and 2), it is easy to check that the following holds true
$$E\|u_N^k\|_0^3\leq E\frac{\|u_N^k\|_0^2+\|u_N^k\|_0^4}{2}\leq C.$$

\romannumeral4) $p>4$. By repeating above procedure, we complete the proof.
\end{proof}

Before showing the weak error between $u_N(t)$ and $u_N^k$, we need some a priori estimates on $\|u_N^k\|_1$ and $\|u_N^k\|_2$.
\begin{prop}\label{lambda-1}
Assume that $\lambda=0$ or $-1$, $u_0\in \dot{H}^1,~u_N^0=\pi_Nu_0$ and $\|Q^{\frac12}\|_{\mathcal{HS}(L^2,\dot{H}^1)}<\infty$.  
Then for any $p\geq 1$, there exists a constant $C=C(\alpha,u_0,p)$ independent of $N$ and $t_k$, such that
$$E\mathcal{H}^p_k\leq C,~\forall~k\in\mathbb{N},$$
where $\mathcal{H}_k:=\|\nabla u_N^k\|_0^2-\frac{\lambda}2\|u_N^k\|_{L^4}^4$.
\end{prop}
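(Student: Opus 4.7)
The plan is to establish a one-step dissipative recursion of the form $E\mathcal{H}_k^p\le e^{-2\alpha p\tau}E\mathcal{H}_{k-1}^p+C\tau$ and iterate it over $k$; this is a discrete-time analogue of the It\^o computation in Proposition \ref{1norm}. Concretely, I would test \eqref{BE} with the ``discrete variational derivative''
\[
\psi_k:=-2\Delta\bar u_N^k-\lambda\bigl(|u_N^k|^2+|e^{-\alpha\tau}u_N^{k-1}|^2\bigr)\bar u_N^k
\]
of $2\mathcal{H}_k$, integrate in $x$, and take the real part. For the left-hand side, the polarization identity
\[
2\mathrm{Re}\bigl(\nabla u_N^k,\,\nabla u_N^k-e^{-\alpha\tau}\nabla u_N^{k-1}\bigr)=\|\nabla u_N^k\|_0^2-e^{-2\alpha\tau}\|\nabla u_N^{k-1}\|_0^2+\|\nabla u_N^k-e^{-\alpha\tau}\nabla u_N^{k-1}\|_0^2
\]
together with the factorization $|a|^4-e^{-4\alpha\tau}|b|^4=2F_k(|a|^2-e^{-2\alpha\tau}|b|^2)$ for $F_k:=(|u_N^k|^2+e^{-2\alpha\tau}|u_N^{k-1}|^2)/2$ yields $\mathrm{LHS}=\mathcal{H}_k-e^{-2\alpha\tau}\mathcal{H}_{k-1}+R_k$, where the remainder $R_k\ge 0$ bundles $\|\nabla(u_N^k-e^{-\alpha\tau}u_N^{k-1})\|_0^2$, $-\lambda\int F_k|u_N^k-e^{-\alpha\tau}u_N^{k-1}|^2\,dx$, and the mismatch $(-\lambda/2)e^{-2\alpha\tau}(1-e^{-2\alpha\tau})\|u_N^{k-1}\|_{L^4}^4$. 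Non-negativity of $R_k$ crucially uses $-\lambda\ge 0$.

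The deterministic-drift contribution $\tau\mathrm{Re}\int\mathbf{i}(\Delta u_N^k+\lambda\pi_N(F_k u_N^k))\psi_k\,dx$ on the right-hand side then vanishes identically. Expanding the four product integrals, each is real: $\|\Delta u_N^k\|_0^2$ and $\|\pi_N(F_k u_N^k)\|_0^2$ on the diagonal (the latter via self-adjointness of $\pi_N$, reality of $F_k$, and commutation of $\pi_N$ with complex conjugation), and the cross term equals $\lambda\int F_k\cdot 2\mathrm{Re}(u_N^k\Delta\bar u_N^k)\,dx$ after using $\Delta\bar u_N^k\in V_N$ to drop $\pi_N$. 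Multiplying the real sum by $\mathbf{i}$ and taking the real part gives zero: this exact cancellation is the designed feature of the half-averaged density $F_k$ in \eqref{BE}.

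What remains is the noise contribution $\mathrm{Re}\int\psi_k\pi_NQ^{\frac12}\delta W_k\,dx$. Since $\psi_k$ depends implicitly on $\delta W_k$ through $u_N^k$, I split $\psi_k=\psi_k^\ast+(\psi_k-\psi_k^\ast)$, where $\psi_k^\ast$ is the $\mathcal{F}_{t_{k-1}}$-measurable predictor obtained by replacing $u_N^k$ with $e^{-\alpha\tau}u_N^{k-1}$ throughout; $\psi_k^\ast$ integrates against $\delta W_k$ to zero in expectation. The residual $\psi_k-\psi_k^\ast$ paired with $\pi_NQ^{\frac12}\delta W_k$ is handled by substituting the scheme for $u_N^k-e^{-\alpha\tau}u_N^{k-1}$, yielding an It\^o-type correction of order $\tau$, with leading piece $2E\|\nabla(\pi_NQ^{\frac12}\delta W_k)\|_0^2\le 2\tau\|Q^{\frac12}\|_{\mathcal{HS}(L^2,\dot{H}^1)}^2$ after an integration by parts transferring the second derivative in $\Delta\bar u_N^k$ onto one derivative of the noise. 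Lower-order cross terms are controlled by the $L^2$ moments of $u_N^{k-1}$ from Proposition \ref{ju} together with the 1D embedding $H^1\hookrightarrow L^\infty$.

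Assembling these pieces gives $E\mathcal{H}_k\le e^{-2\alpha\tau}E\mathcal{H}_{k-1}+C\tau$, which telescopes to $E\mathcal{H}_k\le E\mathcal{H}_0+C/(2\alpha)$ uniformly in $k$ (the initial term is finite by $u_0\in\dot{H}^1$ and the 1D Sobolev inequality $\|u\|_{L^4}^4\lesssim\|u\|_{H^1}^2\|u\|_{L^2}^2$). For $p\ge 2$, I would multiply the step-wise identity by $\mathcal{H}_k^{p-1}$ (well-defined since $\mathcal{H}_k\ge 0$), apply Young's inequality to absorb the It\^o-type cross terms into the dissipation, and invoke the already-proved lower-moment bounds to close the recursion inductively in $p$. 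The main obstacle throughout is the rigorous handling of the implicit noise coupling: since $\psi_k$ depends on $\delta W_k$, neither a direct martingale argument nor an unconditional integration by parts on $\Delta\bar u_N^k$ is immediately available, and the predictor/innovation expansion must be carried out while respecting the regularity budget $\|Q^{\frac12}\|_{\mathcal{HS}(L^2,\dot{H}^1)}<\infty$.
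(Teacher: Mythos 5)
Your overall architecture coincides with the paper's proof: your pairing of \eqref{BE} with $\psi_k=-2\Delta\bar u_N^k-\lambda(|u_N^k|^2+|e^{-\alpha\tau}u_N^{k-1}|^2)\bar u_N^k$ and taking the real part is, after substituting the scheme back into the noise pairing, exactly the paper's manipulation of multiplying by $\overline{u}_N^k-e^{-\alpha\tau}\overline{u}_N^{k-1}$ and taking the imaginary part; the polarization identities, the exact cancellation of the drift thanks to the half-averaged density $F_k$, the sign condition $-\lambda\ge0$, the predictable/innovation splitting of the noise, and the telescoping are all the same.

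There is, however, one step whose stated justification would fail as written. You claim the residual cross terms are ``controlled by the $L^2$ moments of $u_N^{k-1}$ from Proposition \ref{ju} together with the 1D embedding $H^1\hookrightarrow L^{\infty}$.'' The delicate terms are those \emph{linear} in the increment $u_N^k-e^{-\alpha\tau}u_N^{k-1}$ with coefficient $|e^{-\alpha\tau}u_N^{k-1}|^2$ (the paper's terms $c$ and $e$): after Cauchy--Schwarz they produce $\tau\,E\|u_N^{k-1}\|_{L^4}^4$, which is \emph{not} controlled by $L^2$ moments, and applying $H^1\hookrightarrow L^{\infty}$ to $u_N^{k-1}$ is circular, since the $H^1$ bound is precisely what is being proved. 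The correct device is the Gagliardo--Nirenberg interpolation $\|v\|_{L^4}^4\le\|\nabla v\|_0\|v\|_0^3$ followed by a Young inequality with a small parameter proportional to $\alpha\tau$, so that the offending term is re-absorbed as $\alpha\tau e^{-2\alpha\tau}E\|\nabla u_N^{k-1}\|_0^2+C\tau$ into the recursion; this degrades your claimed contraction factor from $e^{-2\alpha\tau}$ to $(1+\alpha\tau)e^{-2\alpha\tau}\le e^{-\alpha\tau}$, which still telescopes. (The embedding $H^1\hookrightarrow L^{\infty}$ is legitimately applied only to the noise increment $\pi_NQ^{\frac12}\delta W_k$, whose $\dot H^1$ norm is controlled by the hypothesis on $Q$.) The same self-referential absorption recurs at the level $p=2$, where the paper again gives up a factor $(1+3\alpha\tau)e^{-4\alpha\tau}\le e^{-\alpha\tau}$. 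With this repair your plan closes exactly as in the paper.
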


\begin{proof}
The proof for $\lambda=0$ is in the same procedure as that for $\lambda=-1$ and is much easier. Here we only give the proof for $\lambda=-1$ 
\begin{equation}\label{geshi-1}
u_N^k-e^{-\alpha\tau}u_N^{k-1}=\left(\mathbf{i}\Delta u_N^k-\mathbf{i}\pi_N\left(\frac{|u_N^k|^2+|e^{-\alpha\tau}u_N^{k-1}|^2}2u_N^k\right)\right)\tau+\pi_NQ^{\frac12}\delta W_k.
\end{equation}
\romannumeral1) $ p=1$.
Multiplying \eqref{geshi-1} by $\overline{u}_N^k-e^{-\alpha\tau}\overline{u}_N^{k-1}$, integrating with respect to $x$, taking the imaginary part and using the fact
$\big{(}(Id-\pi_N)v,v_N\big{)}=0,~\forall~v\in\dot{H}^0,~v_N\in V_N,$
 we have
\begin{align*}
&\|\nabla u_N^k\|_0^2+\|\nabla(u_N^k-e^{-\alpha\tau}u_N^{k-1})\|_0^2-e^{-2\alpha\tau}\|\nabla u_N^{k-1}\|_0^2\\
=&-Re\int_0^1\Big{(}|u_N^k|^2+|e^{-\alpha\tau}u_N^{k-1}|^2\Big{)}u_N^k(\overline{u}_N^k-e^{-\alpha\tau}\overline{u}_N^{k-1})dx\\
&+\frac2{\tau}Im\int_0^1\pi_NQ^{\frac12}\delta W_k(\overline{u}_N^k-e^{-\alpha\tau}\overline{u}_N^{k-1})dx\\
=:&A+B.
\end{align*}
Simple computations yield
\begin{align*}
A=&-Re\left[\int_0^1\Big{(}|u_N^k|^2+|e^{-\alpha\tau}u_N^{k-1}|^2\Big{)}\Big{(}\frac{u_N^k+e^{-\alpha\tau}u_N^{k-1}}2+\frac{u_N^k-e^{-\alpha\tau}u_N^{k-1}}2\Big{)}(\overline{u}_N^k-e^{-\alpha\tau}\overline{u}_N^{k-1})dx\right]\\
\leq&-\frac12\|u_N^k\|_{L^4}^4+\frac12e^{-4\alpha\tau}\|u_N^{k-1}\|_{L^4}^4\\
\leq&-\frac12\|u_N^k\|_{L^4}^4+\frac12e^{-2\alpha\tau}\|u_N^{k-1}\|_{L^4}^4
\end{align*}
and
\begin{align*}
B=&\frac2{\tau}Im\left[\int_0^1\pi_NQ^{\frac12}\delta W_k\bigg{[}-\mathbf{i}\tau\Delta\overline{u}_N^k+\mathbf{i}\tau\frac{|u_N^k|^2+|e^{-\alpha\tau}u_N^{k-1}|^2}2\overline{u}_N^k+\overline{\pi_NQ^{\frac12}\delta W_k}\bigg{]}dx\right]\\
=&2Re\left[\int_0^1\nabla(\pi_NQ^{\frac12}\delta W_k)\cdot\nabla\Big{(}\overline{u}_N^k-e^{-\alpha\tau}\overline{u}_N^{k-1}\Big{)}dx\right]+2Re\left[\int_0^1\nabla(\pi_NQ^{\frac12}\delta W_k)\cdot\nabla\Big{(}e^{-\alpha\tau}\overline{u}_N^{k-1}\Big{)}dx\right]\\
&+Re\left[\int_0^1\Big{(}|u_N^k|^2+|e^{-\alpha\tau}u_N^{k-1}|^2\Big{)}\overline{u}_N^k\cdot\pi_NQ^{\frac12}\delta W_kdx\right]\\
\leq&\frac14\|\nabla(u_N^k-e^{-\alpha\tau}u_N^{k-1})\|_0^2+C\|\nabla(\pi_NQ^{\frac12}\delta W_k)\|_0^2+2Re\left[\int_0^1\nabla(\pi_NQ^{\frac12}\delta W_k)\cdot\nabla\Big{(}e^{-\alpha\tau}\overline{u}_N^{k-1}\Big{)}dx\right]\\
&+Re\left[\int_0^1\Big{(}|u_N^k|^2+|e^{-\alpha\tau}u_N^{k-1}|^2\Big{)}\overline{u}_N^k\cdot\pi_NQ^{\frac12}\delta W_kdx\right].
\end{align*}
$ $\\
Denote $\mathcal{H}_k=\|\nabla u_N^k\|_0^2+\frac12\|u_N^k\|_{L^4}^4$, then
\begin{align}
&E\mathcal{H}_k+\frac34E\|\nabla(u_N^k-e^{-\alpha\tau}u_N^{k-1})\|_0^2\nonumber\\\label{Hditui}
\leq&e^{-2\alpha\tau}E\mathcal{H}_{k-1}+C\tau+ReE\left[\int_0^1\Big{(}|u_N^k|^2+|e^{-\alpha\tau}u_N^{k-1}|^2\Big{)}\overline{u}_N^k\cdot\pi_NQ^{\frac12}\delta W_kdx\right].
\end{align}
Based on the formula
$$(|a|^2+|b|^2)\overline{a}=\overline{a}|a-b|^2+b(\overline{a}-\overline{b})^2+3|b|^2(\overline{a}-\overline{b})+\overline{b}|a-b|^2+(\overline{b})^2(a-b)+2|b|^2\overline{b},$$
the last term on the right hand side can be rewritten as
\begin{align*}
&ReE\left[\int_0^1\Big{(}|u_N^k|^2+|e^{-\alpha\tau}u_N^{k-1}|^2\Big{)}\overline{u}_N^k\cdot\pi_NQ^{\frac12}\delta W_kdx\right]\\
=&ReE\int_0^1\overline{u}_N^k\Big{|}u_N^k-e^{-\alpha\tau}u_N^{k-1}\Big{|}^2\pi_NQ^{\frac12}\delta W_kdx+ReE\int_0^1e^{-\alpha\tau}u_N^{k-1}\big{(}\overline{u}_N^k-e^{-\alpha\tau}\overline{u}_N^{k-1}\big{)}^2\pi_NQ^{\frac12}\delta W_kdx\\
&+3ReE\int_0^1|e^{-\alpha\tau}u_N^{k-1}|^2\big{(}\overline{u}_N^k-e^{-\alpha\tau}\overline{u}_N^{k-1}\big{)}\pi_NQ^{\frac12}\delta W_kdx\\
&+ReE\int_0^1e^{-\alpha\tau}\overline{u}_N^{k-1}\Big{|}u_N^k-e^{-\alpha\tau}u_N^{k-1}\Big{|}^2\pi_NQ^{\frac12}\delta W_kdx\\
&+ReE\int_0^1(e^{-\alpha\tau}\overline{u}_N^{k-1})^2(u_N^k-e^{-\alpha\tau}u_N^{k-1})\pi_NQ^{\frac12}\delta W_kdx+2ReE\int_0^1|e^{-\alpha\tau}u_N^{k-1}|^2e^{-\alpha\tau}\overline{u}_N^{k-1}\pi_NQ^{\frac12}\delta W_kdx\\
=:&~a+b+c+d+e+f.
\end{align*}
Noting that $f=0$, it suffices to estimate the other five terms \begin{align*}
a+b+d\leq&E\Big{[}\|u_N^k\|_0\|u_N^k-e^{-\alpha\tau}u_N^{k-1}\|_{L^4}^2\|\pi_NQ^{\frac12}\delta W_k\|_{L^{\infty}}\\
&+2\|e^{-\alpha\tau}u_N^{k-1}\|_0\|u_N^k-e^{-\alpha\tau}u_N^{k-1}\|_{L^4}^2\|\pi_NQ^{\frac12}\delta W_k\|_{L^{\infty}}\Big{]}\\
\leq&E\left[\Big{(}\|u_N^k\|_0+2\|e^{-\alpha\tau}u_N^{k-1}\|_0\Big{)}\|\nabla(u_N^k-e^{-\alpha\tau}u_N^{k-1})\|_0^{\frac12}\|u_N^k-e^{-\alpha\tau}u_N^{k-1}\|_0^{\frac32}\|\pi_NQ^{\frac12}\delta W_k\|_{L^{\infty}}\right]\\
\leq&\frac14E\left[\|\nabla(u_N^k-e^{-\alpha\tau}u_N^{k-1})\|_0\|u_N^k-e^{-\alpha\tau}u_N^{k-1}\|_0\right]\\
&+CE\left[\Big{(}\|u_N^k\|_0^2+\|e^{-\alpha\tau}u_N^{k-1}\|_0^2\Big{)}\|u_N^k-e^{-\alpha\tau}u_N^{k-1}\|_0^2\|\pi_NQ^{\frac12}\delta W_k\|_{L^{\infty}}^2\right]\\
\leq&\frac14E\|\nabla(u_N^k-e^{-\alpha\tau}u_N^{k-1})\|_0^2+CE\left(\tau^{\frac12}\Big{(}\|u_N^k\|_0^2+\|e^{-\alpha\tau}u_N^{k-1}\|_0^2\Big{)}\|u_N^k-e^{-\alpha\tau}u_N^{k-1}\|_0^2\right)^2\\
&+CE\left(\tau^{-\frac12}\|\pi_NQ^{\frac12}\delta W_k\|^2_{L^{\infty}}\right)^2\\
\leq&\frac14E\|\nabla(u_N^k-e^{-\alpha\tau}u_N^{k-1})\|_0^2+C\tau,
\end{align*}
where in the last step we have used Proposition \ref{ju}, 
\begin{align*}
c+e\leq&4E\left[\|e^{-\alpha\tau}u_N^{k-1}\|_{L^4}^2\|u_N^k-e^{-\alpha\tau}u_N^{k-1}\|_0\|\pi_NQ^{\frac12}\delta W_k\|_{L^{\infty}}\right]\\
\leq&\frac12E\|u_N^k-e^{-\alpha\tau}u_N^{k-1}\|_0^2+8\eta\tau e^{-4\alpha\tau}E\|u_N^{k-1}\|_{L^4}^4\\
\leq&\frac12E\|u_N^k-e^{-\alpha\tau}u_N^{k-1}\|_0^2+2E\left[\left(\sqrt{\alpha}\tau^{\frac12}e^{-\alpha\tau}\|\nabla u_N^{k-1}\|_0\right)\left(\frac{C}{2\sqrt{\alpha}}8\eta\tau^{\frac12}e^{-3\alpha\tau}\|u_N^{k-1}\|_0^3\right)\right]\\
\leq&\frac12E\|u_N^k-e^{-\alpha\tau}u_N^{k-1}\|_0^2+\alpha\tau e^{-2\alpha\tau}E\|\nabla u_N^{k-1}\|_0^2+C\tau.
\end{align*}
Then (\ref{Hditui}) turns to be 
\begin{align*}
E\mathcal{H}_k\leq(1+\alpha\tau)e^{-2\alpha\tau}E\mathcal{H}_{k-1}+C\tau\leq e^{-\alpha\tau}E\mathcal{H}_{k-1}+C\tau.
\end{align*}
We finally obtain that
$$E\mathcal{H}_k\leq C.$$
\romannumeral2) $p=2$. From the case $p=1$, by $\|\cdot\|_{L^4}^4\leq\|\nabla\cdot\|_0\|\cdot\|_0^3$,  we get
\begin{align*}
\mathcal{H}_{k}-e^{-2\alpha\tau}\mathcal{H}_{k-1}
\leq&C\|\nabla(\pi_NQ^{\frac12}\delta W_k)\|_0^2+CRe\left[\int_0^1\nabla(\pi_NQ^{\frac12}\delta W_k)\cdot\nabla\Big{(}e^{-\alpha\tau}\overline{u}_N^{k-1}\Big{)}dx\right]\\
&+C\left(\tau^{\frac12}\Big{(}\|u_N^k\|_0^2+\|e^{-\alpha\tau}u_N^{k-1}\|_0^2\Big{)}\|u_N^k-e^{-\alpha\tau}u_N^{k-1}\|_0^2\right)^2\\&+C\left(\tau^{-\frac12}\|\pi_NQ^{\frac12}\delta W_k\|^2_{L^{\infty}}\right)^2
+\alpha\tau e^{-2\alpha\tau}\mathcal{H}_{k-1}+C\tau^{-1}\|u_N^{k-1}\|_0^6\|\pi_NQ^{\frac12}\delta W_k\|_{L^{\infty}}^4.
\end{align*} 
Multiplying above formula by $\mathcal{H}_{k}$, we have
\begin{align*}
&\mathcal{H}_{k}^2+(\mathcal{H}_{k}-e^{-2\alpha\tau}\mathcal{H}_{k-1})^2-e^{-4\alpha\tau}\mathcal{H}_{k-1}^2\\
\leq&C\mathcal{H}_{k}\|\nabla(\pi_NQ^{\frac12}\delta W_k)\|_0^2+C\mathcal{H}_{k}Re\left[\int_0^1\nabla(\pi_NQ^{\frac12}\delta W_k)\cdot\nabla\Big{(}e^{-\alpha\tau}\overline{u}_N^{k-1}\Big{)}dx\right]\\
&+C\tau\mathcal{H}_{k}\Big{(}\|u_N^k\|_0^2+\|e^{-\alpha\tau}u_N^{k-1}\|_0^2\Big{)}^2\|u_N^k-e^{-\alpha\tau}u_N^{k-1}\|_0^4\\
&+C\mathcal{H}_{k}\left(\tau^{-\frac12}\|\pi_NQ^{\frac12}\delta W_k\|^2_{L^{\infty}}\right)^2
+\alpha\tau e^{-2\alpha\tau}\mathcal{H}_{k}\mathcal{H}_{k-1}+C\tau^{-1}\mathcal{H}_{k}\|u_N^{k-1}\|_0^6\|\pi_NQ^{\frac12}\delta W_k\|_{L^{\infty}}^4\\
=:&a'+b'+c'+d'+e'+f',
\end{align*}
where
\begin{align*}
E[a'+b'+c'+d']\leq&\frac14E(\mathcal{H}_{k}-e^{-2\alpha\tau}\mathcal{H}_{k-1})^2+C\tau\\
&+C\tau e^{-2\alpha\tau}E\left[\mathcal{H}_{k-1}\Big{(}\|u_N^k\|_0^2+\|e^{-\alpha\tau}u_N^{k-1}\|_0^2\Big{)}^2\|u_N^k-e^{-\alpha\tau}u_N^{k-1}\|_0^4\right]\\
\leq&\frac14E(\mathcal{H}_{k}-e^{-2\alpha\tau}\mathcal{H}_{k-1})^2+\frac12\tau e^{-4\alpha\tau}E\mathcal{H}_{k-1}^2+C\tau,
\end{align*}
\begin{align*}
E[e']\leq&\frac12E\left(\mathcal{H}_k-e^{-2\alpha\tau}\mathcal{H}_{k-1}\right)^2+(\frac12\alpha^2\tau^2+\alpha\tau)e^{-4\alpha\tau}E\mathcal{H}_{k-1}^2\\
\leq&\frac12E\left(\mathcal{H}_k-e^{-2\alpha\tau}\mathcal{H}_{k-1}\right)^2+\frac32\alpha\tau e^{-4\alpha\tau}E\mathcal{H}_{k-1}^2
\end{align*}
and
\begin{align*}
E[f']\leq&\frac14E\left(\mathcal{H}_k-e^{-2\alpha\tau}\mathcal{H}_{k-1}\right)^2+C\tau^{-2}E\left[\|u_N^{k-1}\|_0^{12}\|\pi_NQ^{\frac12}\delta W_k\|_{L^{\infty}}^8\right]\\
&+\alpha\tau e^{-4\alpha\tau}E\mathcal{H}_{k-1}^2+C\tau^{-3}E\left[\|u_N^{k-1}\|_0^{12}\|\pi_NQ^{\frac12}\delta W_k\|_{L^{\infty}}^8\right]\\
\leq&\frac14E\left(\mathcal{H}_k-e^{-2\alpha\tau}\mathcal{H}_{k-1}\right)^2+\alpha\tau e^{-4\alpha\tau}E\mathcal{H}_{k-1}^2+C\tau.
\end{align*}
Then we conclude
\begin{align*}
E\mathcal{H}_{k}^2\leq (1+3\alpha\tau)e^{-4\alpha\tau}E\mathcal{H}_{k-1}^2+C\tau
\leq e^{-\alpha\tau}E\mathcal{H}_{k-1}^2+C\tau\leq C,
\end{align*}
where we have used $(1+3\alpha\tau)e^{-3\alpha\tau}\leq1$ for $\alpha\tau<1$.\\
\romannumeral3) For $p=2^l,\, l\in\mathbb{N}$, the result can be proved by above procedure. So it also holds for any $p\in\mathbb{N}$.
\end{proof}

\begin{cor}
Under the assumptions in Proposition \ref{lambda-1}, we have
$$E\|u_N^k-e^{-\alpha\tau}u_N^{k-1}\|_0^{2p}\leq C\tau^p,$$
where constant $C$ is independent of $N$ and $t_k$. 
\end{cor}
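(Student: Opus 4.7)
The plan is to test the scheme \eqref{BE} against $v_k:=u_N^k-e^{-\alpha\tau}u_N^{k-1}$ itself in the $L^2$ inner product, which makes the LHS exactly what we want to control, and then show that every resulting term on the RHS is, on expectation, $O(\tau^p)$ after raising to the $p$-th power. All moment bounds I need are already at hand: Proposition \ref{ju} gives $E\|u_N^k\|_0^{2p}\le C$ and Proposition \ref{lambda-1} gives $E\|\nabla u_N^k\|_0^{2p}\le E\mathcal{H}_k^p\le C$, uniformly in $k$.

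Concretely, taking the real part of the $L^2$ pairing of \eqref{BE} with $v_k$ yields
\begin{equation*}
\|v_k\|_0^2=\tau\,\mathrm{Re}(\mathbf{i}\Delta u_N^k,v_k)+\tau\lambda\,\mathrm{Re}\!\left(\mathbf{i}\pi_N\!\left(\tfrac{|u_N^k|^2+|e^{-\alpha\tau}u_N^{k-1}|^2}{2}u_N^k\right),v_k\right)+\mathrm{Re}(\pi_N Q^{1/2}\delta W_k,v_k).
\end{equation*}
For the Laplacian term, integrate by parts and use $\mathrm{Im}\,\|\nabla u_N^k\|_0^2=0$ to get
$$\bigl|\tau\,\mathrm{Re}(\mathbf{i}\Delta u_N^k,v_k)\bigr|=\tau e^{-\alpha\tau}\bigl|\mathrm{Im}(\nabla u_N^k,\nabla u_N^{k-1})\bigr|\le\tau e^{-\alpha\tau}\|\nabla u_N^k\|_0\|\nabla u_N^{k-1}\|_0.$$
For the nonlinear term, factor out $v_k$ and use the one-dimensional Sobolev embedding $H^1\hookrightarrow L^\infty$ to bound the cubic by $C(\|u_N^k\|_1^2+\|u_N^{k-1}\|_1^2)\|u_N^k\|_0$, then apply $ab\le\tfrac18 a^2+2b^2$ to absorb $\tfrac18\|v_k\|_0^2$. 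The noise term is treated identically via Cauchy--Schwarz and Young with a $\tfrac18$ absorption.

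After absorbing these two $\tfrac18\|v_k\|_0^2$ contributions into the left-hand side, one obtains
\begin{equation*}
\|v_k\|_0^2\le C\tau\|\nabla u_N^k\|_0\|\nabla u_N^{k-1}\|_0+C\tau^2\bigl(\|u_N^k\|_1^2+\|u_N^{k-1}\|_1^2\bigr)^2\|u_N^k\|_0^2+C\|\pi_N Q^{1/2}\delta W_k\|_0^2.
\end{equation*}
Raising to the $p$-th power (with the elementary $(a+b+c)^p\le 3^{p-1}(a^p+b^p+c^p)$), taking expectation, and invoking Cauchy--Schwarz on the first term together with Propositions \ref{ju} and \ref{lambda-1} reduces all three terms to constants times $\tau^p$ (the second giving $\tau^{2p}\le\tau^p$ since $\alpha\tau\le 1$, and the third giving $E\|\pi_N Q^{1/2}\delta W_k\|_0^{2p}\le C_p\tau^p\|Q^{1/2}\|_{\mathcal{HS}(L^2,L^2)}^{2p}$ by the Gaussian moment formula).

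The one subtle point, which I view as the main obstacle, is the Laplacian contribution: it is $O(\tau)$ rather than $O(\tau^2)$, so naive splitting via $ab\le\tfrac12 a^2+\tfrac12 b^2$ would only deliver $\|v_k\|_0^2\le C\tau$ in $L^1$, not in $L^p$ with the correct $\tau$-order. This is circumvented precisely by keeping the Laplacian term as the product $\tau\|\nabla u_N^k\|_0\|\nabla u_N^{k-1}\|_0$ \emph{before} raising to the $p$-th power, so that it contributes exactly $C\tau^p$ after Cauchy--Schwarz on expectation using the uniform-in-$k$ $\dot H^1$ moment bound from Proposition \ref{lambda-1}. The restriction $\lambda\in\{0,-1\}$ in Proposition \ref{lambda-1} is inherited implicitly, since the required $E\|\nabla u_N^k\|_0^{2p}\le C$ is only available in those cases.
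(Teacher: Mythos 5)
Your proposal is correct and follows essentially the same route as the paper: test the scheme against $v_k=u_N^k-e^{-\alpha\tau}u_N^{k-1}$, observe that the Laplacian contribution must be kept as the product $\tau\|\nabla u_N^k\|_0\|\nabla(\cdot)\|_0$ so that the uniform $\dot H^1$ moment bounds of Proposition \ref{lambda-1} deliver $C\tau^p$ after raising to the $p$-th power, absorb the $\|v_k\|_0$ factors from the cubic and noise terms by Young's inequality, and use Gaussian moments for $\|\pi_NQ^{1/2}\delta W_k\|_0^{2p}$. The only (immaterial) differences are that you simplify $\mathrm{Im}(\nabla u_N^k,\nabla v_k)=-e^{-\alpha\tau}\mathrm{Im}(\nabla u_N^k,\nabla u_N^{k-1})$ where the paper bounds $\|\nabla v_k\|_0$ by the triangle inequality, and that you apply Young's inequality before rather than after taking the $p$-th power.
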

\begin{proof}
It is easy to check this by multiplying $\overline{u}_N^k-e^{-\alpha\tau}\overline{u}_N^{k-1}$ to both sides of (\ref{geshi-1}), integrating with respect to $x$ and taking expectation,
\begin{align*}
E\|u_N^k-e^{-\alpha\tau}u_N^{k-1}\|_0^{2p}=&E\Bigg{[}\tau Im\int_0^1\nabla u_N^k\nabla(\overline{u}_N^k-e^{-\alpha\tau}\overline{u}_N^{k-1})dx\\
&+\frac{\tau}4Im\int_0^1\left(|u_N^k|^2+|e^{-\alpha\tau}u_N^{k-1}|^2\right)\left(u_N^k+e^{-\alpha\tau}u_N^{k-1}\right)\left(\overline{u}_N^k-e^{-\alpha\tau}\overline{u}_N^{k-1}\right)dx\\
&+Re\int_0^1\pi_NQ^{\frac12}\delta W_k\left(\overline{u}_N^k-e^{-\alpha\tau}\overline{u}_N^{k-1}\right)dx\Bigg{]}^p\\
\leq&CE\Bigg{[}\tau^p\|\nabla u_N^k\|_0^p\|\nabla\left(u_N^k-e^{-\alpha\tau}u_N^{k-1}\right)\|_0^p\\
&+\tau^p\left(\|u_N^k\|_1^{2p}+\|u_N^{k-1}\|_1^{2p}\right)\left(\|u_N^k\|_0^{2p}+\|u_N^{k-1}\|_0^{2p}\right)\Bigg{]}\\
&+CE\|\pi_NQ^{\frac12}\delta W_k\|_0^{2p}+\frac12E\|u_N^k-e^{-\alpha\tau}u_N^{k-1}\|_0^{2p}\\
\leq&\frac12E\|u_N^k-e^{-\alpha\tau}u_N^{k-1}\|_0^{2p}+C\tau^p.
\end{align*}
Then we complete the proof by Proposition \ref{lambda-1}.
\end{proof}

\begin{prop}
Under the assumptions $\lambda=0$ or $-1$, $u_0\in\dot{H}^2$ and $\|Q^{\frac12}\|_{\mathcal{HS}(L^2,\dot{H}^2)}<\infty$, we also have the uniform boundedness of 2-norm as follows
$$E\|u_N^k\|_2^2\leq C,~\forall~k\in\mathbb{N},$$
where $C$ is also independent of $N$ and $t_k$.
\end{prop}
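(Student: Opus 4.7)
The plan is to mimic Proposition \ref{lambda-1} one derivative higher. Since $\|\cdot\|_2$ is equivalent to $\|\Delta\cdot\|_0+\|\cdot\|_0$ and the $0$-norm is already controlled by Proposition \ref{ju}, it suffices to bound $E\|\Delta u_N^k\|_0^2$ uniformly in $k$. I will test the scheme \eqref{geshi-1} against $\Delta^2\overline u_N^k$, integrate over $[0,1]$, take expectation, and extract the real part. Two integrations by parts are legitimate because $u_N^k\in V_N$ forces both $u_N^k$ and $\Delta u_N^k$ to vanish at $x=0,1$. The polarization identity $2\mathrm{Re}(a,a-b)=\|a\|_0^2+\|a-b\|_0^2-\|b\|_0^2$ with $a=\Delta u_N^k$ and $b=e^{-\alpha\tau}\Delta u_N^{k-1}$ produces on the left-hand side
\begin{equation*}
E\|\Delta u_N^k\|_0^2+E\|\Delta(u_N^k-e^{-\alpha\tau}u_N^{k-1})\|_0^2-e^{-2\alpha\tau}E\|\Delta u_N^{k-1}\|_0^2,
\end{equation*}
and the linear dispersive term contributes a pure imaginary quantity that drops out.

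For the nonlinear term I expand $\Delta\bigl(\tfrac12(|u_N^k|^2+|e^{-\alpha\tau}u_N^{k-1}|^2)u_N^k\bigr)$ via the 1D Leibniz rule. The pieces proportional to $|u_N^k|^2\Delta u_N^k$ and $|e^{-\alpha\tau}u_N^{k-1}|^2\Delta u_N^k$ pair with $\Delta\overline u_N^k$ into real integrands, so $\mathrm{Re}(\mathbf{i}\lambda\cdot)$ kills them. The surviving quintic pieces $\overline u_N^k(\partial_x u_N^k)^2$, $u_N^k|\partial_x u_N^k|^2$, $(u_N^k)^2\Delta\overline u_N^k$, together with their mixed $u_N^{k-1}$-analogues, I control with H\"older, the 1D embedding $H^1\hookrightarrow L^\infty$, and the Gagliardo--Nirenberg bound $\|\partial_x u\|_{L^4}^2\le C\|u\|_1^{3/2}\|u\|_2^{1/2}$. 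After a Young's inequality the nonlinear contribution is dominated by $C\tau(\|u_N^k\|_1^2+\|u_N^{k-1}\|_1^2)\|\Delta u_N^k\|_0^2+C\tau(\|u_N^k\|_1^{10}+\|u_N^{k-1}\|_1^{10}+1)$, and taking expectation and invoking the uniform moment bound $E\|u_N^k\|_1^{2p}\le C$ of Proposition \ref{lambda-1} reduces this to $C\tau E\|\Delta u_N^k\|_0^2+C\tau$.

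The stochastic contribution is handled, exactly as in the proofs of Propositions \ref{ju} and \ref{lambda-1}, by splitting $u_N^k=(u_N^k-e^{-\alpha\tau}u_N^{k-1})+e^{-\alpha\tau}u_N^{k-1}$: the second summand has zero expectation against $\pi_NQ^{1/2}\delta W_k$ by independence, while the increment part is absorbed into $\tfrac14E\|\Delta(u_N^k-e^{-\alpha\tau}u_N^{k-1})\|_0^2$ on the left via Cauchy--Schwarz and Young, leaving a residual $C\tau\|Q^{1/2}\|_{\mathcal{HS}(L^2,\dot H^2)}^2$ which is finite by hypothesis. Combining all three pieces and choosing $\tau$ small enough to absorb the factor $C\tau$ in front of $E\|\Delta u_N^k\|_0^2$ produces the recursion
\begin{equation*}
E\|\Delta u_N^k\|_0^2\le (1+c_1\tau)e^{-2\alpha\tau}E\|\Delta u_N^{k-1}\|_0^2+C\tau\le e^{-\alpha\tau}E\|\Delta u_N^{k-1}\|_0^2+C\tau,
\end{equation*}
which iterates exactly as in Proposition \ref{lambda-1} to the desired uniform bound.

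The main obstacle is the nonlinear step. The algebraic cancellation only eliminates the two symmetric $|u|^2\Delta u$ pieces; the remaining quintic integrals still carry a full second derivative and the crude estimate they admit is only of the size $\|u\|_1^2\|\Delta u\|_0^2$, which does not beat the left-hand-side coefficient unless the step size is small. Closing the recursion thus depends on the sharp one-dimensional Gagliardo--Nirenberg/Sobolev arithmetic, on taking $\tau$ sufficiently small so that $C\tau$ can be absorbed against the coefficient $1/2$ coming from the polarization, and on the sign assumption $\lambda\in\{0,-1\}$, which is precisely what makes the uniform $H^1$ input from Proposition \ref{lambda-1} available; the focusing case $\lambda=+1$ would break this input and hence the whole estimate.
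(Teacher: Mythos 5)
Your overall architecture (polarization identity for $\Delta u_N^k$, independence to kill the expectation of the noise against $e^{-\alpha\tau}\Delta u_N^{k-1}$, absorption of the increment part of the noise, geometric iteration of the recursion) is sound, and it is true that testing against $\Delta^2\overline u_N^k$ gives the nonlinear term a prefactor $\tau$ that the paper's pairing against $\Delta(\overline u_N^k-e^{-\alpha\tau}\overline u_N^{k-1})$ does not produce. But there is a genuine gap at the heart of the nonlinear estimate. After expanding $\Delta(|u|^2u)=2|u|^2\Delta u+4u|\nabla u|^2+2\overline u(\nabla u)^2+u^2\Delta\overline u$, the piece $u^2\Delta\overline u$ paired with $\Delta\overline u$ (and likewise the mixed pieces $\Delta(|u_N^{k-1}|^2)u_N^k\Delta\overline u_N^k$, $\nabla(|u_N^{k-1}|^2)\cdot\nabla u_N^k\,\Delta\overline u_N^k$) is \emph{quadratic} in second derivatives, so the best pathwise bound is of the form $\tau\,\|u_N^k\|_1^2\,\|\Delta u_N^k\|_0^2$, with a \emph{random} coefficient $\|u_N^k\|_1^2$. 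Your step ``taking expectation and invoking $E\|u_N^k\|_1^{2p}\le C$ reduces this to $C\tau E\|\Delta u_N^k\|_0^2$'' is invalid: $\|u_N^k\|_1$ is not almost surely bounded, only its moments are, and $E[\|u_N^k\|_1^2\|\Delta u_N^k\|_0^2]$ cannot be dominated by $C\,E\|\Delta u_N^k\|_0^2$. Young's inequality cannot separate the product either, because $\|\Delta u_N^k\|_0$ already appears with exponent $2$; H\"older would require a higher moment $E\|\Delta u_N^k\|_0^{2q}$, which is circular. (The terms where $\Delta u$ appears with exponent strictly less than $2$, e.g.\ $u|\nabla u|^2\Delta\overline u$, are fine: there Young produces a deterministic $\epsilon\tau\|\Delta u\|_0^2$ plus moments of $\|u\|_1$.)

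This is exactly the obstruction the paper's proof is built around: it does not expand the Laplacian of the nonlinearity, but instead isolates the leading cubic quantity $Re\int_0^1|u_N^k|^2u_N^k\Delta\overline u_N^k\,dx$ and moves it to the left-hand side inside the modified energy $\mathcal{K}_k:=\|\Delta u_N^k\|_0^2-Re\int_0^1|u_N^k|^2u_N^k\Delta\overline u_N^k\,dx$, so that $A_a^k-e^{-4\alpha\tau}A_a^{k-1}$ telescopes; every remaining term is arranged to contain either the increment $u_N^k-e^{-\alpha\tau}u_N^{k-1}$ (whose $2p$-moments are $O(\tau^p)$ by the Corollary) or the noise increment, and Young's inequality with weights $\tau^{\pm1}$ then yields \emph{deterministic} coefficients $\tfrac18\alpha\tau$ in front of $E\|\Delta u_N^k\|_0^2$ that the damping $e^{-2\alpha\tau}$ can absorb. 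To repair your argument you would need an analogous device — either the same modified functional, or a rewriting of the quadratic-in-$\Delta u$ pieces in terms of increments — rather than the moment bound on $\|u_N^k\|_1$ alone. A secondary caveat: even where your absorption is legitimate, the constant multiplying $\tau E\|\Delta u_N^k\|_0^2$ must be kept at a fixed fraction of $\alpha$ (as the paper does with $(1-\tfrac12\alpha\tau)^{-1}e^{-2\alpha\tau}\le e^{-\alpha\tau}$); a generic $C\tau$ with $C>\alpha$ would not close the recursion for any $\tau$.
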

\begin{proof}
We also give the proof for $\lambda=-1$ only. 
Multiply (\ref{geshi-1}) by $\Delta(\overline{u}_N^k-e^{-\alpha\tau}\overline{u}_N^{k-1})$, integrating with respect to $x$, and then taking the imaginary part, we obtain
\begin{align*}
&\|\Delta u_N^k\|_0^2+\|\Delta(u_N^k-e^{-\alpha\tau}u_N^{k-1})\|_0^2-e^{-2\alpha\tau}\|\Delta u_N^{k-1}\|_0^2\\
=&Re\int_0^1\left(|u_N^k|^2+|e^{-\alpha\tau}u_N^{k-1}|^2\right)u_N^k\Delta(\overline{u}_N^k-e^{-\alpha\tau}\overline{u}_N^{k-1})dx\\
&-\frac2{\tau}Im\int_0^1\pi_NQ^{\frac12}\delta W_k\Delta(\overline{u}_N^k-e^{-\alpha\tau}\overline{u}_N^{k-1})dx\\
=:&A'+B'.
\end{align*}
According to the uniform boundedness of any order of 0-norm and 1-norm, we have the following estimations.
\begin{align*}
E[A']=&ReE\int_0^1|u_N^k|^2u_N^k\Delta(\overline{u}_N^k-e^{-\alpha\tau}\overline{u}_N^{k-1})dx+e^{-3\alpha\tau}ReE\int_0^1|u_N^{k-1}|^2u_N^{k-1}\Delta(\overline{u}_N^k-e^{-\alpha\tau}\overline{u}_N^{k-1})dx\\
&+e^{-2\alpha\tau}ReE\int_0^1|u_N^{k-1}|^2(u_N^k-e^{-\alpha\tau}u_N^{k-1})\Delta(\overline{u}_N^k-e^{-\alpha\tau}\overline{u}_N^{k-1})dx\\
=&ReE\int_0^1|u_N^k|^2u_N^k\Delta\overline{u}_N^kdx-e^{-4\alpha\tau}ReE\int_0^1|u_N^{k-1}|^2u_N^{k-1}\Delta\overline{u}_N^{k-1}dx\\
&+e^{-2\alpha\tau}ReE\int_0^1|u_N^{k-1}|^2(u_N^k-e^{-\alpha\tau}u_N^{k-1})\Delta(\overline{u}_N^k-e^{-\alpha\tau}\overline{u}_N^{k-1})dx\\
&+ReE\int_0^1u_N^k\Delta\overline{u}_N^k|u_N^k-e^{-\alpha\tau}u_N^{k-1}|^2dx\\
&+2ReE\int_0^1\overline{u}_N^k(\nabla u_N^k)^2(\overline{u}_N^k-e^{-\alpha\tau}\overline{u}_N^{k-1})dx\\
&+4ReE\int_0^1u_N^k|\nabla u_N^k|^2(\overline{u}_N^k-e^{-\alpha\tau}\overline{u}_N^{k-1})dx\\
&+ReE\int_0^1(u_N^k-e^{-\alpha\tau}u_N^{k-1})\Delta\overline{u}_N^k\left(|u_N^k|^2-|e^{-\alpha\tau}u_N^{k-1}|^2\right)dx\\
=:&A_a^k-e^{-4\alpha\tau}A_a^{k-1}+A_b+A_c+A_d+A_e+A_f.
\end{align*}
We estimate above terms repectively and obtain 
\begin{align*}
-e^{-4\alpha\tau}A_a^{k-1}=&-e^{-2\alpha\tau}A_a^{k-1}+e^{-2\alpha\tau}(1-e^{-2\alpha\tau})A_a^{k-1}\\
\leq&-e^{-2\alpha\tau}A_a^{k-1}+C\tau E\|u_N^{k-1}\|_1^4
\leq-e^{-2\alpha\tau}A_a^{k-1}+C\tau,
\end{align*}
\begin{align*}
A_b\leq&e^{-2\alpha\tau}E\left[\|u_N^{k-1}\|_{L^{\infty}}^2\|u_N^k-e^{-\alpha\tau}u_N^{k-1}\|_0\|\Delta(u_N^k-e^{-\alpha\tau}u_N^{k-1})\|_0\right]\\
\leq&\frac16E\|\Delta(u_N^k-e^{-\alpha\tau}u_N^{k-1})\|_0^2+C\tau E\|u_N^{k-1}\|_1^8+C\tau^{-1}E\|u_N^k-e^{-\alpha\tau}u_N^{k-1}\|_0^4\\
\leq&\frac16E\|\Delta(u_N^k-e^{-\alpha\tau}u_N^{k-1})\|_0^2+C\tau,
\end{align*}
\begin{align*}
A_c\leq&E\left[\|u_N^k-e^{-\alpha\tau}u_N^{k-1}\|_{L^4}^2\|u_N^k\|_{L^{\infty}}\|\Delta u_N^k\|_0\right]\\
\leq&C\tau^{-1}E\left[\|\nabla(u_N^k-e^{-\alpha\tau}u_N^{k-1})\|_0\|u_N^k-e^{-\alpha\tau}u_N^{k-1}\|_0^3\|u_N^k\|_1^2\right]+\frac18\alpha\tau E\|\Delta u_N^k\|_0^2\\
\leq&\frac16E\|\Delta(u_N^k-e^{-\alpha\tau}u_N^{k-1})\|_0^2+C\tau^{-5}E\|u_N^k-e^{-\alpha\tau}u_N^{k-1}\|_0^{12}+C\tau E\|u_N^k\|_1^8+\frac18\alpha\tau E\|\Delta u_N^k\|_0^2\\
\leq&\frac16E\|\Delta(u_N^k-e^{-\alpha\tau}u_N^{k-1})\|_0^2+\frac18\alpha\tau E\|\Delta u_N^k\|_0^2+C\tau,
\end{align*}
\begin{align*}
A_d=&2ReE\int_0^1\overline{u}_N^k(\nabla u_N^k)^2\bigg{[}-\mathbf{i}\tau\Delta\overline{u}_N^k+\mathbf{i}\tau\pi_N\bigg{(}\frac{|u_N^k|^2+|e^{-\alpha\tau}u_N^{k-1}|^2}2\overline{u}_N^k\bigg{)}+\overline{\pi_NQ^{\frac12}\delta W_k}\bigg{]}dx\\
\leq&\frac1{16}\alpha\tau E\|\Delta u_N^k\|_0^2+C\tau+2ReE\int_0^1\overline{u}_N^k(\nabla u_N^k)^2\overline{\pi_NQ^{\frac12}\delta W_k}dx\\
\leq&\frac1{16}\alpha\tau E\|\Delta u_N^k\|_0^2+C\tau+2ReE\int_0^1(\overline{u}_N^k-e^{-\alpha\tau}\overline{u}_N^{k-1})(\nabla u_N^k)^2\overline{\pi_NQ^{\frac12}\delta W_k}dx\\
&+2ReE\int_0^1e^{-\alpha\tau}\overline{u}_N^{k-1}\left((\nabla u_N^k)^2-(e^{-\alpha\tau}\nabla u_N^{k-1})^2\right)\overline{\pi_NQ^{\frac12}\delta W_k}dx\\
\leq&\frac1{16}\alpha\tau E\|\Delta u_N^k\|_0^2+C\tau+CE\left[\|u_N^k-e^{-\alpha\tau}u_N^{k-1}\|_0\|\nabla u_N^k\|_{L^4}^2\|\pi_NQ^{\frac12}\delta W_k\|_{L^{\infty}}\right]\\
&+CE\left[\|\nabla(u_N^k-e^{-\alpha\tau}u_N^{k-1})\|_0\left(\|u_N^{k-1}\|_1\|u_N^{k}\|_1+\|u_N^{k-1}\|_1^2\right)\|\pi_NQ^{\frac12}\delta W_k\|_{L^{\infty}}\right]\\
\leq&\frac16E\|\Delta(u_N^k-e^{-\alpha\tau}u_N^{k-1})\|_0^2+\frac18\alpha\tau E\|\Delta u_N^k\|_0^2+C\tau,
\end{align*}
and
\begin{align*}
A_f=&ReE\int_0^1(u_N^k-e^{-\alpha\tau}u_N^{k-1})\Delta\overline{u}_N^kRe\big{[}\left(u_N^k-e^{-\alpha\tau}u_N^{k-1}\right)(\overline{u}_N^k+e^{-\alpha\tau}\overline{u}_N^{k-1})\big{]}dx\\
\leq&E\left[\|u_N^k-e^{-\alpha\tau}u_N^{k-1}\|_{L^4}^2(\|u_N^k\|_{L^{\infty}}+\|u_N^{k-1}\|_{L^{\infty}})\|\Delta u_N^k\|_0\right]\\
\leq&\frac16E\|\Delta(u_N^k-e^{-\alpha\tau}u_N^{k-1})\|_0^2+\frac18\alpha\tau E\|\Delta u_N^k\|_0^2+C\tau,
\end{align*}
where $A_e$ has an same estimation as $A_d$ and we have used that $\|\nabla\cdot\|_0\cong\|\cdot\|_1\leq\|\cdot\|_2\cong\|\Delta\cdot\|_0$. 
So we obtain
$$E[A']\leq\frac56E\|\Delta(u_N^k-e^{-\alpha\tau}u_N^{k-1})\|_0^2+\frac12\alpha\tau E\|\Delta u_N^k\|_0^2+C\tau.$$
For term $B'$, we have
\begin{align*}
E[B']=&-\frac2{\tau}ImE\int_0^1\Delta\left(\pi_NQ^{\frac12}\delta W_k\right)\left(-\mathbf{i}\tau\Delta\overline{u}_N^k+\mathbf{i}\pi_N\left(\frac{|u_N^k|^2+|e^{-\alpha\tau}u_N^{k-1}|^2}2\overline{u}_N^k\right)\tau+\overline{\pi_NQ^{\frac12}\delta W_k}\right)dx\\
=&2ReE\int_0^1\Delta\left(\pi_NQ^{\frac12}\delta W_k\right)\Delta(\overline{u}_N^k-e^{-\alpha\tau}\overline{u}_N^{k-1})dx\\
&-ReE\int_0^1\Delta\left(\pi_NQ^{\frac12}\delta W_k\right)\left(|u_N^k|^2\overline{u}_N^k-|e^{-\alpha\tau}u_N^{k-1}|^2e^{-\alpha\tau}u_N^{k-1}\right)dx\\
&-ReE\int_0^1\Delta\left(\pi_NQ^{\frac12}\delta W_k\right)|e^{-\alpha\tau}u_N^{k-1}|^2(\overline{u}_N^k-e^{-\alpha\tau}\overline{u}_N^{k-1})dx\\
\leq&\frac16E\|\Delta(u_N^k-e^{-\alpha\tau}u_N^{k-1})\|_0^2+C\tau.
\end{align*}
Denoting $\mathcal{K}_k:=\|\Delta u_N^k\|_0^2-Re\int_0^1|u_N^k|^2u_N^k\Delta\overline{u}_N^kdx$, then
$
E\|\Delta u_N^k\|_0^2\leq E\mathcal{K}_k+C
$
and
\begin{align*}
E\mathcal{K}_k-e^{-2\alpha\tau}E\mathcal{K}_{k-1}\leq \frac12\alpha\tau E\|\Delta u_N^k\|_0^2+C\tau
\leq \frac12\alpha\tau E\mathcal{K}_k+C\tau.
\end{align*}
Finally, $$E\mathcal{K}_k\leq(1-\frac12\alpha\tau)^{-1}e^{-2\alpha\tau}E\mathcal{K}_{k-1}+C\tau\leq C,$$
where we have used $(1-\frac12\alpha\tau)^{-1}e^{-2\alpha\tau}\leq e^{-\alpha\tau}$ for $\alpha\tau<1$.
\end{proof}

\subsection{Ergodicity of the fully discrete scheme}
To prove the ergodicity of the scheme (\ref{BE}), 
 we will use the discrete form of theorem \ref{ergodicity}. We give some existing results before our theorem.
\begin{ap}[Minorization condition in \cite{mattingly}]\label{ap1}
The Markov chain $(x_n)_{n\in\mathbb{N}}$ with transition kernel $P_n(x,G)=P(x_n\in G|x_0=x)$ satisfies, for some fixed compact set $\mathcal{C}\in\mathcal{B}(\mathbb{R}^d)$, the following:\\
\romannumeral1) for some $y^*\in int(\mathcal{C})$ there is, for any $\delta>0$, a $t_1=t_1(\delta)\in\mathbb{N}$ such that$$P_{t_1}(x,B_{\delta}(y^*))>0\quad\forall x\in\mathcal{C};$$
\romannumeral2) the transition kernel possesses a density $p_n(x,y)$, more precisely
$$P_n(x,G)=\int_Gp_n(x,y)dy\quad\forall x\in\mathcal{C},\,G\in\mathcal{B}(\mathbb{R}^d)\cap\mathcal{B}(\mathcal{C})$$
and $p_n(x,y)$ is jointly continuous in $(x,y)\in \mathcal{C}\times\mathcal{C}$.
\end{ap}

\begin{ap}[Lyapunov condition in \cite{mattingly}]\label{ap2}
There is a function $F : \mathbb{R}^d\to[1,\infty)$, with $\lim_{|x|\to\infty}F(x)=\infty$, real numbers $\theta\in(0,1)$, and $\gamma\in[0,\infty)$ such that
$$E[F(x_{n+1})|\mathcal{F}_n]\leq\theta F(x_n)+\gamma.$$
\end{ap}

\begin{df}
We say that function $F$ is essentially quadratic if there exist constants $C_i>0,\;i=1,2,3$, such that
$$C_1(1+\|x\|^2)\le F(x)\le C_2(1+\|x\|^2),\;\;|\nabla F(x)|\le C_3(1+\|x\|).$$ 
\end{df}

\begin{tm}[\cite{mattingly}]\label{mat}
Assume that a Markov chain $(x_n)_{n\in\N}$  satisfies Assumptions \ref{ap1} and \ref{ap2}  with an essentially quadratic $F$, then the chain possesses a unique invariant measure.
\end{tm}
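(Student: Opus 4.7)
The plan is to decompose the statement into an existence part, handled by the Lyapunov condition alone via a Krylov--Bogoliubov argument, and a uniqueness part, handled by combining the Lyapunov condition with the minorization through a Harris-type coupling. The essentially quadratic character of $F$ is used only to transfer the $F$-moment bounds produced along the way into the $\|\cdot\|^2$-moment bounds that are natural in the intended applications to the fully discrete scheme \eqref{BE}.

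For existence, I would iterate Assumption \ref{ap2} to obtain
\begin{equation*}
E[F(x_n)\mid x_0]\le\theta^n F(x_0)+\frac{\gamma}{1-\theta},\quad\forall\,n\in\N.
\end{equation*}
Since $F(x)\to\infty$ as $|x|\to\infty$, Chebyshev's inequality makes the Ces\`aro averages $\bar\mu_n:=\frac1n\sum_{k=1}^n P^k(x_0,\cdot)$ tight on $\R^d$, and any subsequential weak limit is invariant by the standard Krylov--Bogoliubov argument (Feller continuity being supplied by the joint continuity of the density $p_n$ on $\mathcal{C}\times\mathcal{C}$, after localizing with the Lyapunov estimate). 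The same bound gives $\int F\,d\mu<\infty$ for every such invariant $\mu$.

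For uniqueness I would upgrade Assumption \ref{ap1} to a small-set estimate of the form
\begin{equation*}
p_{t_1}(x,y)\ge\varepsilon,\quad\forall\,(x,y)\in\mathcal{C}\times B_\delta(y^*),
\end{equation*}
which follows by compactness from joint continuity and the strict positivity of $p_{t_1}(x,y^*)$ on $\mathcal{C}$. Then I would run two independent copies of the chain from arbitrary initial laws and, at each joint visit to $\mathcal{C}\times\mathcal{C}$, attempt a coupling by splitting the transition kernel as $P^{t_1}(x,\cdot)=\varepsilon\,\nu(\cdot)+(1-\varepsilon)R(x,\cdot)$ with $\nu$ the normalized lower bound on $B_\delta(y^*)$, merging the two copies with probability $\varepsilon$ at each such attempt. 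The Lyapunov estimate applied to the product chain forces geometrically controlled return times to $\mathcal{C}\times\mathcal{C}$, so successful coupling occurs almost surely with exponentially small tails, yielding exponential contraction in total variation between $P^n(x,\cdot)$ and $P^n(y,\cdot)$; uniqueness (and in fact geometric ergodicity) of the invariant measure follows.

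The main obstacle I anticipate is the compatibility between the compact set $\mathcal{C}$ in Assumption \ref{ap1} and the sublevel sets of $F$: the minorization is assumed only on $\mathcal{C}$, whereas the Lyapunov estimate only guarantees returns to $\{F\le 2\gamma/(1-\theta)\}$. One therefore has to enlarge $\mathcal{C}$ to contain a suitable sublevel set and propagate the minorization to the enlargement via the Chapman--Kolmogorov identity
\begin{equation*}
p_{n+t_1}(x,y)=\int p_n(x,z)\,p_{t_1}(z,y)\,dz
\end{equation*}
together with the joint continuity of $p_n$. Once this compatibility is in place, the simultaneous-coupling argument assembles into the Harris-type conclusion obtained in \cite{mattingly}.
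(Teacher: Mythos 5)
This statement is a quoted result: the paper gives no proof of Theorem \ref{mat} at all, but simply imports it from \cite{mattingly}, where it is established via the Meyn--Tweedie/Harris machinery. Your proposal reconstructs essentially that same machinery (Lyapunov drift plus a small-set minorization, assembled by coupling), so the overall architecture is the right one and there is nothing to compare against inside the paper itself.

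Two steps of your sketch, however, are justified by appeals that the stated hypotheses do not actually support. First, for existence you invoke Krylov--Bogoliubov, which needs the Feller property; Assumptions \ref{ap1}--\ref{ap2} do not give it, and joint continuity of $p_n(x,y)$ only on $\mathcal{C}\times\mathcal{C}$ controls neither $x\mapsto P(x,\cdot)$ off $\mathcal{C}$ nor the mass landing outside $\mathcal{C}$. This is repairable without new ideas, since your own coupling estimate makes $(P^n(x,\cdot))_n$ Cauchy in the $F$-weighted total variation metric, and the limit is the invariant measure --- so existence should be extracted from the contraction, not from Krylov--Bogoliubov. Second, your uniform small-set bound $p_{t_1}(x,y)\ge\varepsilon$ on $\mathcal{C}\times B_\delta(y^*)$ is derived from ``strict positivity of $p_{t_1}(x,y^*)$ on $\mathcal{C}$,'' but Assumption \ref{ap1}(i) only asserts $P_{t_1}(x,B_\delta(y^*))>0$, i.e.\ positivity of the integral of the density over the ball; the density at the centre point $y^*$ may well vanish, and the point of positive density depends on $x$. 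The Chapman--Kolmogorov composition you mention at the end is in fact needed already here (compose two transitions through $B_\delta(y^*)$ and use joint continuity plus compactness on the composed density) and not only for enlarging $\mathcal{C}$. Your closing remark about the compatibility of $\mathcal{C}$ with the sublevel sets of $F$ is a genuine issue with the theorem as restated in this paper --- in the source the compact set is tied to a sublevel set of the essentially quadratic $F$, which is precisely where that hypothesis enters --- so you have correctly identified the one place where the paper's formulation is looser than the original.
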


Based on the preliminaries above and the theory of Markov chains, we prove the following theorem.

\begin{tm}
For all $\tau$ sufficiently small, the solution  $(u_N^k)_{k\in\mathbb{N}}$ of scheme (\ref{BE}) has a unique invariant measure $\mu_N^{\tau}$. Thus, it is ergodic.
\end{tm}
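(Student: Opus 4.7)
The plan is to verify the hypotheses of Theorem \ref{mat} for the Markov chain $(u_N^k)_{k\in\N}$, viewed as taking values in $\mathbb{R}^{2N}$ via the identification with the real and imaginary parts of its Fourier coefficients in $V_N$ (the same identification used in the proof of Theorem \ref{spaceergodic}). Thus I need to exhibit (a) an essentially quadratic Lyapunov function satisfying Assumption \ref{ap2}, and (b) a compact set $\mathcal{C}\subset\mathbb{R}^{2N}$ on which the minorization Assumption \ref{ap1} holds. Once both are in place, Theorem \ref{mat} will immediately supply the unique invariant measure $\mu_N^\tau$ and hence ergodicity.

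For (a) I take $F(u) := 1 + \|u\|_0^2$, which is manifestly essentially quadratic. The computation at the top of Step 2, part \romannumeral1), of the proof of Proposition \ref{ju}, applied \emph{before} taking expectation, gives the pathwise bound
\begin{equation*}
\|u_N^k\|_0^2 \le e^{-2\alpha\tau}\|u_N^{k-1}\|_0^2 + \|\pi_N Q^{1/2}\delta W_k\|_0^2,
\end{equation*}
and then conditioning on $\mathcal{F}_{t_{k-1}}$ yields $E[F(u_N^k)\mid\mathcal{F}_{t_{k-1}}] \le \theta F(u_N^{k-1}) + \gamma$ with $\theta = e^{-2\alpha\tau}\in(0,1)$ and $\gamma = 1 - e^{-2\alpha\tau} + \eta\tau < \infty$. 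I will take $\mathcal{C}$ to be a sufficiently large sublevel set $\{F\le R\}$ of this Lyapunov function.

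For (b) I exploit the well-posedness of Proposition \ref{ju}: for $\tau$ small enough the nonlinear map
\begin{equation*}
T_x(v) := v - \mathbf{i}\tau\Delta v - \mathbf{i}\lambda\tau\pi_N\!\Bigl(\tfrac{|v|^2+|e^{-\alpha\tau}x|^2}{2}\,v\Bigr)
\end{equation*}
is a smooth bijection of $V_N$ onto itself depending smoothly on $x$, so (\ref{BE}) expresses $u_N^k = T_{u_N^{k-1}}^{-1}\bigl(e^{-\alpha\tau}u_N^{k-1} + \pi_N Q^{1/2}\delta W_k\bigr) =: \Psi(u_N^{k-1}, \pi_N Q^{1/2}\delta W_k)$ with $\Psi$ smooth and, for each fixed $u_N^{k-1}$, a diffeomorphism in the noise variable. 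Under the standing non-degeneracy $\eta_m>0$ for $m\le N$, the operator $\pi_N Q^{1/2}$ is invertible on $V_N$, so $\pi_N Q^{1/2}\delta W_k$ is a non-degenerate centered Gaussian on $\mathbb{R}^{2N}$ with smooth, strictly positive density $g_\tau$. A change of variables then produces the one-step transition density
\begin{equation*}
p_1(x,y) = g_\tau\bigl(T_x(y) - e^{-\alpha\tau}x\bigr)\, |\det DT_x(y)|,
\end{equation*}
which is jointly continuous and strictly positive on $\mathbb{R}^{2N}\times\mathbb{R}^{2N}$. Positivity delivers the accessibility condition \romannumeral1) of Assumption \ref{ap1} in a single step for any interior point $y^*\in\mathcal{C}$, and joint continuity delivers \romannumeral2).

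The main obstacle I anticipate is making the global diffeomorphism claim for $T_x$ fully rigorous: local invertibility is the standard implicit function theorem applied to the linearization, but ruling out fold points and ensuring surjectivity requires a uniform non-vanishing of $\det DT_x(v)$ for all $v \in V_N$. This should follow from the smallness of $\tau$, since the nonlinear Fr\'echet derivative enters with a factor $\tau$ while $\mathrm{Id} - \mathbf{i}\tau\Delta$ is invertible on $V_N$ with uniformly bounded inverse, combined with a properness / monotonicity argument of the same flavour as the uniqueness proof deferred to Appendix \ref{5.2}. Once this diffeomorphism claim is in hand, assembling (a) and (b) to invoke Theorem \ref{mat} is immediate.
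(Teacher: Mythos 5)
Your overall architecture is exactly the paper's: identify $(u_N^k)$ with an $\mathbb{R}^{2N}$-valued Markov chain, verify the Lyapunov condition with $F=1+\|\cdot\|_0^2$ and the minorization condition, and invoke Theorem \ref{mat}. The Lyapunov part is fine in substance (and in fact more careful than the paper's, which only cites the unconditional moment bound of Proposition \ref{ju}), though the inequality you call ``pathwise'' is not: the cross term $2e^{-\alpha\tau}\mathrm{Re}\int_0^1 \overline{u}_N^{k-1}\,\pi_NQ^{1/2}\delta W_k\,dx$ does not vanish $\omega$-wise, only after conditioning on $\mathcal{F}_{t_{k-1}}$; your stated conditional bound $E[F(u_N^k)\mid\mathcal{F}_{t_{k-1}}]\le\theta F(u_N^{k-1})+\gamma$ is nevertheless correct.

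The genuine gap is the one you flag yourself: the claim that $T_x$ is a global diffeomorphism of $V_N$ for all sufficiently small $\tau$. This cannot follow from smallness of $\tau$ alone. The Fr\'echet derivative of the cubic term contributes an operator of size $O(\tau\|v\|_{L^\infty}^2)$, which is \emph{not} uniformly small over $v\in V_N$ for any fixed $\tau>0$; and the uniqueness argument of Appendix \ref{5.2} only applies to pairs of actual solutions of the scheme, which inherit the a priori bound \eqref{bound} in terms of the data $(u_N^{k-1},g_k)$ — so the admissible $\tau$ there depends on the size of the data, while your change-of-variables formula needs invertibility for \emph{all} noise realizations, which are unbounded Gaussians. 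Hence the formula $p_1(x,y)=g_\tau(T_x(y)-e^{-\alpha\tau}x)|\det DT_x(y)|$ and its claimed global positivity are not justified as stated. The paper avoids this: for accessibility it does not invert anything globally, but simply observes that for prescribed endpoints $(u_N^{k-1},u_N^k)$ the noise increment is \emph{determined} by the scheme (i.e.\ equals $T_x(y)-e^{-\alpha\tau}x$), and the non-degenerate Gaussian charges every neighbourhood of that value; combined with uniqueness of the solution for bounded data this drives the chain into $B_\delta(y^*)$ with positive probability from any $x$ in the compact set $\mathcal{C}$. For the density requirement \romannumeral2) of Assumption \ref{ap1} the paper appeals only to the continuity of the measurable selection $\kappa$ together with the smoothness of the law of $\delta W_k$ — a weaker (if itself somewhat terse) claim than your explicit density formula, and one that does not require ruling out critical points of $T_x$ on all of $V_N$. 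To repair your route you would either need to restrict the change of variables to the a priori bounded region actually visited by solutions with data in $\mathcal{C}$ and noise in a large ball (accepting a lower bound on $p_1$ only there, which suffices for the minorization on a compact set), or fall back on the paper's selection-map argument.
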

\begin{proof}

\romannumeral1) Lyapunov condition.
Based on Proposition \ref{ju}, we can take essentially quadratic function $F(\cdot)=1+\|\cdot\|_0^2$ as the Lyapunov function, and the Lyapunov condition holds.\\
\romannumeral2) Minorization condition.
In scheme (\ref{BE}), it gives
\begin{align}\label{scheme1}
P_N^{k}=&e^{-\alpha\tau}P_N^{k-1}-\tau\left(\Delta Q_N^{k}+\frac{\lambda}2\pi_N\Big{(}\left(|P_N^{k}|^2+|Q_N^{k}|^2+|e^{-\alpha\tau}P_N^{k-1}|^2+|e^{-\alpha\tau}Q_N^{k-1}|^2\right)Q_N^{k}\Big{)}\right)\nonumber\\
&+\sum_{m=1}^N\sqrt{\eta_m}e_m\delta_{k}\beta_{m}^1,\\\label{scheme2}
Q_N^{k}=&e^{-\alpha\tau}Q_N^{k-1}+\tau\left(\Delta P_N^{k}+\frac{\lambda}2\pi_N\Big{(}\left(|P_N^{k}|^2+|Q_N^{k}|^2+|e^{-\alpha\tau}P_N^{k-1}|^2+|e^{-\alpha\tau}Q_N^{k-1}|^2\right)P_N^{k}\Big{)}\right)\nonumber\\
&+\sum_{m=1}^N\sqrt{\eta_m}e_m\delta_{k}\beta_{m}^2,
\end{align}
where $P_N^k$ and $Q_N^k$ denote the real and imaginary part of $u_N^k$ respectively, that is $u_N^k=P_N^k+\mathbf{i}Q_N^k$. Also, $\pi_NQ^{\frac12}\delta W_{k}=\sum_{m=1}^N\sqrt{\eta_m}e_m\left(\delta_k\beta_{m}^1+\mathbf{i}\delta_k\beta_{m}^2\right)$, where $\delta_k\beta_{m}^1$ and $\delta_k\beta_{m}^2$ are the real and imaginary part of $\delta W_k$ respectively.

For any $y_1=a_1+\mathbf{i}b_1,\,y_2=a_2+\mathbf{i}b_2\in V_N$ with $a_i$ and $b_i$ denoting the real and imaginary part of $y_i$ ($i=1,2$) respectively, as $\{e_m\}_{m=1}^N$ is a basis of $V_N$,          $\{\delta_{k}\beta_{m}^1,~\delta_{k}\beta_{m}^2\}_{m=1}^N$ can be uniquely determined to ensure that $(P_N^{k-1},Q_N^{k-1})=(a_1,b_1)$ and $(P_N^{k},Q_N^{k})=(a_2,b_2)$, which implies the irreducibility of $u_N^k$.

As stated in Proposition \ref{ju}, the $\mathcal{F}_{t_k}$-measurable solution $\{u_N^k\}_{k\in\mathbb{N}}$ is defined through a unique continuous function: $u_N^k=\kappa(u_N^{k-1},\frac{\pi_NQ^{\frac12}\delta W_k}{\sqrt{\tau}})$, where $\delta W_k$ has a $C^{\infty}$ density. Thus, the transition kernel $P_1(x,G),~G\in\mathcal{B}(V_N)$ possesses a jointly continuous density $p_1(x,y)$.
Furthermore, densities $p_k(x,y)$ are achieved by the time-homogeneous property of Markov chain $\{u_N^k\}_{k\in\mathbb{N}}$.

With above conditions, based on Theorem \ref{mat}, we prove that $u_N^k$ possesses a unique invariant measure.
\end{proof}
\subsection{Weak error between solutions $u_N$ and $u_N^k$}
We still use modified processes to calculate the weak error of the fully discrete scheme in temporal direction. 
Denote $S_{\tau}=(Id-\mathbf{i}\tau\Delta)^{-1}e^{-\alpha\tau}$, then scheme (\ref{BE}) is rewritten as
\begin{align}
u_N^k=&S_{\tau}u_N^{k-1}+\mathbf{i}\lambda\tau e^{\alpha\tau}S_{\tau}\pi_N\left(\frac{|u_N^k|^2+|e^{-\alpha\tau}u_N^{k-1}|^2}{2}u_N^k\right)+e^{\alpha\tau}S_{\tau}\pi_NQ^{\frac12}\delta W_k\nonumber\\
=&S_{\tau}^ku_N^0+\mathbf{i}\lambda\tau e^{\alpha\tau}\sum_{l=1}^kS_{\tau}^{k+1-l}\pi_N\left( \frac{|u_N^l|^2+|e^{-\alpha\tau}u_N^{l-1}|^2}{2}u_N^l\right)+e^{\alpha\tau}\sum_{l=1}^kS_{\tau}^{k+1-l}\pi_NQ^{\frac12}\delta W_l
\end{align}

\begin{lm}\label{operator2}
For any $k\in \mathbb{N}$ and sufficiently small $\tau$, we have the following estimates,
\begin{align*}
&\romannumeral1)~\|S_{\tau}^k-S(t)\|_{\mathcal{L}(\dot{H}^2,L^2)}\leq C(t+\tau)^{\frac12}e^{-\alpha t}\tau^{\frac12},\;\;\;t\in[t_{k-1},t_{k+1}],\\
&\romannumeral2)~\|S_{\tau}^k-S(t)\|_{\mathcal{L}(\dot{H}^1,\dot{H}^1)}\leq Ce^{-\alpha t},\;\;\;t\in[t_{k-1},t_{k+1}],
\end{align*}
where the constant $C=C(\alpha)$ is independent of $k$ and $\tau$.
\end{lm}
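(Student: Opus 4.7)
The plan is to diagonalize both $S(t)$ and $S_{\tau}^{k}$ simultaneously in the eigenbasis $\{e_n\}$ of the Dirichlet Laplacian. Writing $\mu_n=(n\pi)^2$ so that $\lambda_n=\mathbf{i}\mu_n+\alpha$, one has
\[
S(t)e_n=e^{-\alpha t}e^{-\mathbf{i}\mu_n t}e_n,\qquad S_{\tau}^{k}e_n=e^{-\alpha t_k}(1+\mathbf{i}\tau\mu_n)^{-k}e_n,
\]
so both inequalities reduce to mode-by-mode scalar estimates that are then summed against the appropriate $|\lambda_n|$-weight.

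For part (i), I would split $S_{\tau}^{k}-S(t)=(S_{\tau}^{k}-S(t_k))+(S(t_k)-S(t))$. The second piece is the exact semigroup over an increment of length at most $\tau$, and $|e^{-\lambda_n h}-1|\le h|\lambda_n|$ (valid because $\mathrm{Re}\,\lambda_n=\alpha>0$ keeps the straight-line integration path in the left half-plane) yields $\|S(t_k)-S(t)\|_{\mathcal{L}(\dot{H}^2,L^2)}\le Ce^{-\alpha t}\tau$, easily absorbed into the target bound $C(t+\tau)^{\frac12}e^{-\alpha t}\tau^{\frac12}$. For the first piece, write its action on $e_n$ as $e^{-\alpha t_k}(A^k-B^k)e_n$ with $A=(1+\mathbf{i}\tau\mu_n)^{-1}$ and $B=e^{-\mathbf{i}\tau\mu_n}$, and observe $|A|\le 1$, $|B|=1$. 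A short Taylor expansion of $(1+\mathbf{i}\tau\mu_n)e^{-\mathbf{i}\tau\mu_n}=1+O((\tau\mu_n)^2)$, combined with the uniform estimate $|A-B|\le 2$ at large $\tau\mu_n$, delivers $|A-B|\le C(\tau\mu_n)^2$ across all frequencies. The main obstacle, and the reason the rate is $\tau^{\frac12}$ rather than $\tau$, is that the straightforward telescoped bound $|A^k-B^k|\le k|A-B|\le Ct_k\mu_n^2\tau$ grows linearly in $t_k$ and so cannot be used uniformly in time. I would therefore interpolate geometrically against the trivial bound $|A^k-B^k|\le|A|^k+|B|^k\le 2$ via $\min(a,b)\le\sqrt{ab}$, obtaining
\[
|A^k-B^k|\le\sqrt{2k|A-B|}\le C\sqrt{t_k\tau}\,\mu_n.
\]
Summing $e^{-2\alpha t_k}|A^k-B^k|^2|(u,e_n)|^2$ over $n$ and using $\mu_n\le|\lambda_n|$ yields $\|S_{\tau}^{k}-S(t_k)\|_{\mathcal{L}(\dot{H}^2,L^2)}\le C\sqrt{t_k\tau}\,e^{-\alpha t_k}$; for $t\in[t_{k-1},t_{k+1}]$ and $\alpha\tau\le 1$, one has $e^{-\alpha t_k}\le e\cdot e^{-\alpha t}$ and $\sqrt{t_k}\le\sqrt{t+\tau}$, which recovers the stated inequality.

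Part (ii) is considerably easier and needs no interpolation. From $|(1+\mathbf{i}\tau\mu_n)^{-k}|\le 1$ and $|e^{-\mathbf{i}\mu_n t}|=1$, the modewise difference is bounded by $e^{-\alpha t_k}+e^{-\alpha t}\le Ce^{-\alpha t}$ uniformly in $n$. Since the $\dot{H}^1\to\dot{H}^1$ operator norm carries the identical weight $|\lambda_n|$ on both source and target coefficients, this pointwise-in-$n$ estimate passes through directly and gives $\|S_{\tau}^{k}-S(t)\|_{\mathcal{L}(\dot{H}^1,\dot{H}^1)}\le Ce^{-\alpha t}$.
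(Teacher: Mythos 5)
Your proof is correct, and for part (ii) as well as for the reduction from general $t\in[t_{k-1},t_{k+1}]$ to $t=t_k$ it matches what the paper does (triangle inequality modewise, plus $\|S(t_k)-S(t)\|_{\mathcal{L}(\dot{H}^2,L^2)}\le Ce^{-\alpha t}\tau$, absorbed into the target). The genuine divergence is in the core estimate $\|S_{\tau}^k-S(t_k)\|_{\mathcal{L}(\dot{H}^2,L^2)}\le C\sqrt{t_k\tau}\,e^{-\alpha t_k}$. The paper gets it by an energy argument: it writes $e_k=(S_{\tau}^k-S(t_k))u(0)$ as the solution of a perturbed recursion, multiplies by $\overline{e}_k$, integrates in space and takes real parts so that the skew term $\mathbf{i}\tau\Delta e_k$ drops out, bounds the local truncation term by $C\tau^2e^{-2\alpha t_k}\|\Delta u(0)\|_0^2$ using the decay of $\|\Delta u^k\|_0$ and $\|\Delta u(t)\|_0$, and telescopes $\|e_k\|_0^2\le e^{-2\alpha\tau}\|e_{k-1}\|_0^2+C\tau^2e^{-2\alpha t_k}\|\Delta u(0)\|_0^2$ to collect the factor $k\tau^2=t_k\tau$. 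You instead diagonalize and interpolate $\min\bigl(2,\,k|A-B|\bigr)\le\sqrt{2k|A-B|}$ mode by mode. The two arguments lose the same half order in the same place — the per-step error $\tau^2\mu_n^2$ cannot be summed against the $\dot{H}^2$ weight at full order — and produce the same structure $\sqrt{t_k\tau}\,e^{-\alpha t_k}$. Your spectral route is shorter and makes the origin of the rate $\tau^{1/2}$ completely transparent; the paper's energy route is the one that would survive if $S_{\tau}$ and $S(t)$ could not be simultaneously diagonalized (non-spectral space discretizations, variable coefficients). One cosmetic remark: the paper's displayed eigenvalue formula for part (ii) contains typos (it prints $(1+n^2\pi^2)^{-k}$ and $e^{-n^2\pi^2t_k}$ where $(1+\mathbf{i}\tau n^2\pi^2)^{-k}$ and $e^{-\mathbf{i}n^2\pi^2t_k}$ are meant); your version of those symbols is the correct one.
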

\begin{proof} 
\textbf{Step 1.} If $t=t_k$. As $S(t)$ is the operator semigroup of equation $du(t)=(\mathbf{i}\Delta-\alpha)u(t)dt,\,u(0)=u^0\in \dot{H}^2$, and $S_{\tau}$ is the corresponding discrete operator semigroup, we have
\begin{align}\label{use}
&S_{\tau}^ku(0)=u^k=e^{-\alpha\tau}u^{k-1}+\mathbf{i}\tau\Delta u^k,\\
&S(t_k)u(0)=u(t_k)=e^{-\alpha\tau}u(t_{k-1})+\int_{t_{k-1}}^{t_k}\mathbf{i}e^{-\alpha(t_k-s)}\Delta u(s)ds.
\end{align}
Denote $e_k=u^k-u(t_k)=\left(S_{\tau}^k-S(t_k)\right)u(0)$ with $e_0=0$, then
$$e_k=e^{-\alpha\tau}e_{k-1}+\mathbf{i}\tau\Delta e_k+\mathbf{i}\int_{t_{k-1}}^{t_k}\big{[}\Delta u(t_k)-e^{-\alpha(t_k-s)}\Delta u(s)\big{]}ds.$$\\
Multiply $\overline{e}_k$ to above formula, integrate with respect to $x$, take the real part, and we get
\begin{align*}
&\frac12\left[\|e_k\|_0^2+\|e_k-e^{-\alpha\tau}e_{k-1}\|_0^2-e^{-2\alpha\tau}\|e_{k-1}\|_0^2\right]\\
=&Re\left[\mathbf{i}\int_0^1\int_{t_{k-1}}^{t_k}\Delta\overline{e}_k\int^{t_k}_s\mathbf{i}e^{-\alpha(t_k-r)}\Delta u(r)drdsdx\right]\\
\leq&C\int_{t_{k-1}}^{t_k}\int_s^{t_k}\|\Delta u^k-\Delta u(t_{k})\|_0\|\Delta u(r)\|_0drds\\
\leq&Ce^{-2\alpha t_{k}}\|\Delta u(0)\|_0^2\tau^2,
\end{align*}
where we have used the fact that
$\|\Delta u^k\|^2_0\leq e^{-2\alpha t_k}\|\Delta u^0\|^2_0$ and 
$\|\Delta u(t)\|_0\leq Ce^{-\alpha t}\|\Delta u(0)\|_0.$
 In fact, multiplying $\Delta\overline{u}^k-e^{-\alpha\tau}\Delta \overline{u}^{k-1}$ to (\ref{use}), integrating in space and taking the imaginary part, we obtain
\begin{align*}
\|\Delta u^k\|^2_0\leq e^{-2\alpha\tau}\|\Delta u^{k-1}\|^2_0
\leq e^{-2\alpha t_k}\|\Delta u^0\|^2_0.
\end{align*}
Then it's easy to check that
$$\|e_k\|_0^2\leq e^{-2\alpha\tau}\|e_{k-1}\|_0^2+Ce^{-2\alpha t_{k}}\|\Delta u(0)\|_0^2\tau^2$$
 leads to
\begin{equation}\label{jieguo}
\|e_k\|_0^2\leq Ct_ke^{-2\alpha t_k}\|\Delta u(0)\|_0^2\tau,
\end{equation}
which finally yields
$\|S_{\tau}^k-S(t_k)\|_{\mathcal{L}(\dot{H}^2,L^2)}\leq Ct_k^{\frac12}e^{-\alpha t_k}\tau^{\frac12}$ in $\romannumeral1)$. 

For $\romannumeral2)$, we have 
\begin{align*}
\|\left(S_{\tau}^k-S(t_k)\right)u(0)\|_1^2
=&\sum_{n=1}^{\infty}\left|e^{-\alpha t_k}\left((1+n^2\pi^2)^{-k}-e^{-n^2\pi^2t_k}\right)(u(0),e_n)\right|^2|\lambda_n|\\
\le&4e^{-2\alpha t_k}\sum_{n=1}^{\infty}\left|(u(0),e_n)\right|^2|\lambda_n|=4e^{-2\alpha t_k}\|u(0)\|_1^2.
\end{align*}

In the following two steps, we only give the proof of $\romannumeral1)$, and $\romannumeral2)$ can be proved in a same procedure. We use the notation $\|\cdot\|=\|\cdot\|_{\mathcal{L}(\dot{H}^2,L^2)}$, which is an operator norm defined at the beginning of this paper.

\textbf{Step 2.} If $t\in[t_{k-1},t_k]$,
\begin{align}
\|S_{\tau}^k-S(t)\|\leq&\|S_{\tau}^k-S(t_k)\|+\|S(t_k)-S(t)\|\nonumber\\
\leq&Ct_k^{\frac12}e^{-\alpha t_k}\tau^{\frac12}+e^{-\alpha t}|e^{-\alpha(t_k-t)}-1|\nonumber\\
\leq&Ct_k^{\frac12}e^{-\alpha t_k}\tau^{\frac12}+e^{-\alpha t}\sum_{n=1}^{\infty}\frac{1}{n!}(\alpha\tau)^n\nonumber\\
\leq&Ct_k^{\frac12}e^{-\alpha t_k}\tau^{\frac12}+e^{-\alpha t}\alpha\tau\frac{e^{\alpha\tau}-1}{\alpha\tau}\nonumber\\
\leq&C(t+\tau)^{\frac12}e^{-\alpha t}\tau^{\frac12}.\nonumber
\end{align}
We have used the fact that $\frac{e^{\alpha\tau}-1}{\alpha\tau}$ is uniformly bounded for $\alpha\tau\in[0,1]$.

\textbf{Step 3.} If $t\in[t_k,t_{k+1}]$,
\begin{align}
\|S_{\tau}^k-S(t)\|\leq&\|S_{\tau}^k-S(t_k)\|+\|S(t_k)-S(t)\|\nonumber\\
\leq&Ct_k^{\frac12}e^{-\alpha t_k}\tau^{\frac12}+e^{-\alpha t}|e^{-\alpha(t_k-t)}-1|\nonumber\\
\leq&Ct_k^{\frac12}e^{-\alpha t}e^{\alpha(t-t_k)}\tau^{\frac12}+e^{-\alpha t}\alpha\tau\frac{e^{\alpha\tau-1}}{\alpha\tau}\nonumber\\
\leq&C(t+\tau)^{\frac12}e^{-\alpha t}\tau^{\frac12}.\nonumber
\end{align}
We have used the fact $e^{\alpha(t-t_k)}\leq e^{\alpha\tau}\leq e$.
\end{proof}

\begin{rk}
From (\ref{use}), we can also prove that 
$$\|S_{\tau}^k\|_{\mathcal{L}(L^2,L^2)}\leq Ce^{-\alpha t},$$
where $k$ and $t$ satisfying $t\in[t_{k-1},t_{k+1}].$
\end{rk}

Next theorem gives the time-independent weak error of the solutions for different cases.

\begin{tm}\label{result2}
Assume that $u_0\in \dot{H}^2$, $u_N^0=u_N(0)=\pi_Nu_0$ and $\|Q^{\frac12}\|^2_{\mathcal{HS}(L^2,\dot{H}^2)}<\infty$. For the cases $\lambda= 0$ or $-1$, the weak errors are independent of time and of order $\frac12$. 
That is, for any $\phi\in C_b^2(L^2)$, there exists a constant $C=C(u_0,\phi)$ independent of $N,T$ and $M$, such that for any $T=M\tau$,\\
$$\Big{|}E[\phi(u_N(T))]-E[\phi(u_N^M)]\Big{|}\leq C\tau^{\frac12}.$$

\end{tm}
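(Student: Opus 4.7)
The plan is to adapt the strategy of Theorem \ref{weakconvergence} from the spatial comparison to the temporal one. Working inside the finite-dimensional space $V_N$, introduce the modified process $Y_N(t) = S(T-t)u_N(t)$ and the auxiliary function
$$v_N(T-t, y) := E\bigl[\phi(u_N(T)) \mid u_N(t) = S(t-T)y\bigr].$$
Because $u_N$ solves the finite-dimensional SDE \eqref{spectral} with coefficients that are locally Lipschitz and enjoy the uniform bounds of Proposition \ref{1norm}, the same Gr\"onwall argument that led to \eqref{dv}--\eqref{d2v} yields the time-independent bounds $|(Dv_N, h)| \leq C\|\phi\|_{C_b^1}\|h\|_0$ and $|(D^2 v_N h, h)| \leq C\|\phi\|_{C_b^2}\|h\|_0^2$. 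Applying It\^o's formula to $v_N(T-t, Y_N(t))$ and taking expectation produces the identity $E[\phi(u_N(T))] = E v_N(T, S(T)\pi_N u_0)$, in complete analogy with Step~1 of Theorem \ref{weakconvergence}.

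For the fully discrete side, let $Y_N^k := S_\tau^{M-k}u_N^k$ and telescope
$$E[\phi(u_N^M)] - E v_N(T, S(T)\pi_N u_0) = \sum_{k=0}^{M-1} E\bigl[v_N(T-t_{k+1}, Y_N^{k+1}) - v_N(T-t_k, Y_N^k)\bigr].$$
On each increment, perform a discrete second-order Taylor expansion of $v_N$ centred at $(T-t_k, Y_N^k)$ and substitute the backward Kolmogorov equation $\partial_t v_N = -(Dv_N, H_N(\cdot)) - \tfrac12 \mathrm{Tr}[(S(T-t)\pi_N Q^{1/2})^* D^2 v_N \, S(T-t)\pi_N Q^{1/2}]$ in order to cancel the leading-order drift and diffusion contributions. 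The residual at each step decomposes into three pieces: (a) the discrepancy between $S_\tau$ and $S(\tau)$ acting on the propagated value and on the Wiener increment, handled by Lemma \ref{operator2} together with $\|Q^{1/2}\|_{\mathcal{HS}(L^2,\dot{H}^2)} < \infty$; (b) the discrepancy between the nonlinearity $\mathbf{i}\lambda \pi_N(|u_N^k|^2 u_N^k)$ and its implicit symmetric counterpart in \eqref{BE}, controlled via the corollary $E\|u_N^k - e^{-\alpha\tau}u_N^{k-1}\|_0^{2p} \leq C\tau^p$ combined with the uniform $\dot{H}^1$- and $\dot{H}^2$-bounds on $u_N^k$; and (c) the covariance remainder, again estimated via Lemma \ref{operator2} applied trace-wise. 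Crucially, each local error carries a prefactor $e^{-\alpha(T-t_k)}$ inherited from the damping built into $S_\tau^{M-k}$ and from the bound on $Dv_N$, so summation in $k$ produces a geometric series whose total is of order $\tau^{1/2}/\alpha$, independent of $T$ and $M$.

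The main obstacle lies in the analysis of term (b). Because the scheme is both implicit and nonlinear, the one-step consistency error involves cubic differences of $u_N^k$ and $u_N^{k-1}$, and a naive Taylor expansion fails to yield the required $O(\tau^{3/2})$ local rate. Extracting this rate demands the sharp $L^2$-increment bound $E\|u_N^k - e^{-\alpha\tau}u_N^{k-1}\|_0^{2p} = O(\tau^p)$ together with the $\dot{H}^1 \to \dot{H}^1$ smoothing of Lemma \ref{operator2}(ii), so that the damping factor $e^{-\alpha(T-t_k)}$ is preserved without consuming regularity that is only available locally in time. Once this local bound is established, the time-independence of the global error is automatic: the derivatives of $v_N$ are bounded uniformly in $T$, each local contribution carries the exponential damping prefactor, and a geometric summation gives the claimed $C\tau^{1/2}$ estimate.
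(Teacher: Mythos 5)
Your plan is in substance the paper's own proof: the same modified processes $Y_N(t)=S(T-t)u_N(t)$ and $Y_N^k=S_\tau^{M-k}u_N^k$, the same Kolmogorov-equation bounds \eqref{dv}--\eqref{d2v} transferred to $v_N$, the same three-way splitting into free-propagator error, nonlinear drift discrepancy and covariance remainder, and the same key inputs (Lemma \ref{operator2}, the uniform $\dot H^1$- and $\dot H^2$-moment bounds, the increment bound $E\|u_N^k-e^{-\alpha\tau}u_N^{k-1}\|_0^{2p}\le C\tau^p$, and the integrability of $(T-t+\tau)^{1/2}e^{-\alpha(T-t)}$). The one structural difference is that you telescope $v_N(T-t_k,Y_N^k)$ and perform a \emph{discrete} second-order Taylor expansion, whereas the paper builds a continuous-time interpolation $\tilde Y_N(t)$ of the discrete process and applies It\^o's formula on each subinterval. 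This is not merely cosmetic: with $\phi\in C_b^2$ only, the discrete Taylor remainder contains $\bigl(\bigl(D^2v_N(\xi)-D^2v_N(Y_N^k)\bigr)h,h\bigr)$ with $h$ carrying a Wiener increment, so $\|h\|_0^2\sim\tau$ while the difference of second derivatives is bounded but has no quantitative modulus of continuity; each step then contributes only $o(\tau)$, which after summation destroys the rate $\tau^{1/2}$. It\^o's formula evaluates $D^2v_N$ exactly at the current point and eliminates this remainder, which is why the paper takes that route. Your argument closes once you replace the discrete expansion by the It\^o step on the interpolation (or strengthen $\phi$ to $C_b^{2,\gamma}$).

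Two further cautions. First, your telescoping identity as written drops the mismatch $v_N(T,S_\tau^M\pi_Nu_0)-v_N(T,S(T)\pi_Nu_0)$, i.e.\ the paper's term $\uppercase\expandafter{\romannumeral1}$; it must be estimated by the \emph{global} bound $\|S_\tau^M-S(T)\|_{\mathcal L(\dot H^2,L^2)}\le C(T+\tau)^{1/2}e^{-\alpha T}\tau^{1/2}$. Be aware that the one-step consistency $\|S_\tau-S(\tau)\|_{\mathcal L(\dot H^2,L^2)}$ is only $O(\tau)$, and summing $M$ such local errors even with the damping weights $e^{-\alpha(T-t_k)}$ yields $O(1/\alpha)$, not $O(\tau^{1/2})$; the half order for the linear part is recovered only through the squared-norm accumulation argument inside the proof of Lemma \ref{operator2}, so all free-propagator discrepancies must enter through the global operators $S_\tau^{M+1-l}-S(T-t)$, as they do automatically when $Y_N^k$ is kept in the form $S_\tau^{M-k}u_N^k$. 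Second, the exponential prefactor in the local errors comes from $S_\tau^{M+1-l}$ and $S(T-t)$ inside the residuals, not from the bound on $Dv_N$, which is merely uniform in $T$. With these points repaired, your argument reproduces the paper's estimate.
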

\begin{cor}
Under above assumptions, for any $t\in[(M-1)\tau,(M+1)\tau]$, it also holds
$$\Big{|}E[\phi(u_N(t))]-E[\phi(u_N^M)]\Big{|}\leq C\tau^{\frac12}.$$
\begin{proof}
Let $T=M\tau$. As
\begin{align*}
\Big{|}E[\phi(u_N(t))]-E[\phi(u_N^M)]\Big{|}=\Big{|}E[\phi(u_N(T))]-E[\phi(u_N(t))]\Big{|}+\Big{|}E[\phi(u_N(T))]-E[\phi(u_N^M)]\Big{|}
\end{align*}
and
\begin{align*}
&\Big{|}E[\phi(u_N(T))]-E[\phi(u_N(t))]\Big{|}
\leq\|\phi\|_{C^1_b}E\|u_N(T)-u_N(t)\|_0\\
\leq&\|\phi\|_{C^1_b}(T-t)\sup_{t\geq0}\bigg{[}E\|u_N(t)\|_2+E\|u_N(t)\|_0+E\|u_N(t)\|_1^2\|u_N(t)\|_0\bigg{]}\\
&+\|\phi\|_{C^1_b}E\|\pi_NQ^{\frac12}\big{(}W(T)-W(t)\big{)}\|_0
\leq C\tau^{\frac12},
\end{align*}
we then complete the proof according to Theorem \ref{result2}.
\end{proof}
\end{cor}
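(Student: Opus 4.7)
The plan is to reduce this to Theorem \ref{result2} by the triangle inequality together with a short-time continuity estimate for the semi-discrete process $u_N$. Concretely, setting $T=M\tau$, I would write
\begin{align*}
\bigl|E[\phi(u_N(t))]-E[\phi(u_N^M)]\bigr|
\le \bigl|E[\phi(u_N(t))]-E[\phi(u_N(T))]\bigr|
+\bigl|E[\phi(u_N(T))]-E[\phi(u_N^M)]\bigr|,
\end{align*}
so that the second summand is already bounded by $C\tau^{1/2}$ via Theorem \ref{result2}, and only the first summand (a purely continuous-time object) needs work.

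For the first term I would use $\phi\in C_b^2(L^2)\subset C_b^1(L^2)$ to pass to
\begin{align*}
\bigl|E[\phi(u_N(t))]-E[\phi(u_N(T))]\bigr|\le \|\phi\|_{C_b^1}\, E\|u_N(T)-u_N(t)\|_0,
\end{align*}
and then estimate $E\|u_N(T)-u_N(t)\|_0$ directly from the mild formulation of \eqref{spectral}. On the interval $[t,T]$ of length $|T-t|\le\tau$, write
\begin{align*}
u_N(T)-u_N(t)=\bigl(S(T-t)-\mathrm{Id}\bigr)u_N(t)
+\mathbf{i}\lambda\!\int_t^T\!S(T-s)\pi_N(|u_N|^2u_N)\,ds
+\!\int_t^T\!S(T-s)\pi_N Q^{1/2}dW(s).
\end{align*}
The first piece is controlled by $\|(S(T-t)-\mathrm{Id})v\|_0\le (T-t)\,\|\Delta v\|_0+\alpha(T-t)\|v\|_0$, applied with $v=u_N(t)$, giving an $O(\tau)$ contribution after invoking the uniform bounds $\sup_{t\ge0}E\|u_N(t)\|_2\le C$ and $\sup_{t\ge0}E\|u_N(t)\|_0\le C$ from Proposition \ref{1norm}. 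The deterministic drift integral is bounded by $\int_t^T e^{-\alpha(T-s)}E\|u_N(s)\|_1^2\|u_N(s)\|_0\,ds\le C\tau$, again by Proposition \ref{1norm} and the Sobolev embedding $H^1\hookrightarrow L^\infty$. The stochastic convolution is estimated by the It\^o isometry and $\|S(T-s)\|_{\mathcal L(L^2,L^2)}\le e^{-\alpha(T-s)}\le 1$, producing
\begin{align*}
E\Bigl\|\int_t^T S(T-s)\pi_N Q^{1/2}dW(s)\Bigr\|_0
\le \Bigl(\int_t^T\|Q^{1/2}\|_{\mathcal{HS}(L^2,L^2)}^2\,ds\Bigr)^{1/2}\le C\tau^{1/2}.
\end{align*}

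Putting everything together yields $E\|u_N(T)-u_N(t)\|_0\le C\tau^{1/2}$, hence the first summand is $O(\tau^{1/2})$, matching the second one via Theorem \ref{result2}, which closes the estimate. The only mild obstacle is checking that the constants produced are genuinely time-independent, but this follows immediately because all norms of $u_N$ that enter are controlled uniformly in $t$ by Proposition \ref{1norm} and the bound on $\|Q^{1/2}\|_{\mathcal{HS}(L^2,\dot H^2)}$; no Gr\"onwall-in-$T$ argument is needed.
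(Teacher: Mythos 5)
Your proposal is correct and follows essentially the same route as the paper: a triangle inequality reducing the claim to Theorem \ref{result2} plus a short-time continuity estimate $E\|u_N(T)-u_N(t)\|_0\le C\tau^{1/2}$, with the deterministic part of order $\tau$ via the uniform moment bounds of Proposition \ref{1norm} and the noise increment contributing the limiting order $\tau^{1/2}$. The only cosmetic difference is that you work from the mild formulation (using $(S(T-t)-\mathrm{Id})u_N(t)$) while the paper integrates the strong form of the equation directly; both hinge on exactly the same ingredients.
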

\begin{proof}[Proof of Theorem \ref{result2}]
We split it into several steps.

\textbf{Step 1.} Calculation of $E[\phi(u_N(T))]$.

Recall the process we constructed in the proof of Theorem \ref{weakconvergence},
$$dY_N(t)=H_N(Y_N(t))dt+S(T-t)\pi_NQ^{\frac12}dW(t).$$
Now we denote $v_N(T-t,y)=E[\phi(Y_N(T))|Y_N(t)=y]$, then \begin{align}\label{formula}
v_N(0,Y_N(T))=v_N(T,Y_N(0))+\int_0^T\Big{(}Dv_N(T-t,Y_N(t)),S(T-t)\pi_NQ^{\frac12}dW(t)\Big{)},
\end{align}
where
\begin{align}
&v_N(0,Y_N(T))=E[\phi(u_N(T))|Y_N(T)=u_N(T)],\nonumber\\
&v_N(T,Y_N(0))=E[\phi(Y_N(T))|Y_N(0)=S(T)u_N(0)]\nonumber\\
&=E\left[\phi\left(S(T)u_N(0)+\int_0^TH_N(Y_N(s))ds+\int_0^TS(T-s)\pi_NQ^{\frac12}dW\right)\right|Y_N(0)=S(T)u_N(0)\Big{]}.\nonumber
\end{align}
The expectation of (\ref{formula}) implies,
\begin{align}\label{3.1}
E[\phi(u_N(T))]=E\left[\phi\left(S(T)u_N(0)+\int_0^TH_N(Y_N(s))ds+\int_0^TS(T-s)\pi_NQ^{\frac12}dW\right)\right].
\end{align}

\textbf{Step 2.} Calculation of $E[\phi(u_N^M)]$.

Similar to \cite{debussche2}, we define a discrete modified process
\begin{align}
Y_N^k:=&S_{\tau}^{M-k}u_N^k\nonumber\\
=&S_{\tau}^Mu_N^0+\mathbf{i}\lambda\tau e^{\alpha\tau}\sum_{l=1}^kS_{\tau}^{M+1-l}\pi_N\left( \frac{|u_N^l|^2+|e^{-\alpha\tau}u_N^{l-1}|^2}2u_N^l\right)+e^{\alpha\tau}\sum_{l=1}^kS_{\tau}^{M+1-l}\pi_NQ^{\frac12}\delta W_l\nonumber\\
=&S_{\tau}^Mu_N^0+\mathbf{i}\lambda\tau e^{\alpha\tau}\sum_{l=1}^kS_{\tau}^{M+1-l}\pi_N\left( \frac{|S_{\tau}^{l-M}Y_N^l|^2+|e^{-\alpha\tau}S_{\tau}^{l-1-M}Y_N^{l-1}|^2}2S_{\tau}^{l-M}Y_N^l\right)\\
&+e^{\alpha\tau}\sum_{l=1}^kS_{\tau}^{M+1-l}\pi_NQ^{\frac12}\delta W_l.\nonumber
\end{align}
Consider the following time continuous interpolation of $Y_N^k$, which is also $V_N$-valued and $\{\mathcal{F}_t\}_{t\geq0}$-adaped,
\begin{align*}
\tilde{Y}_N(t):=&S_{\tau}^Mu_N^0+\mathbf{i}\lambda e^{\alpha\tau}\int_0^t\sum_{l=1}^{M}S_{\tau}^{M+1-l}\pi_N\left(\frac{|S_{\tau}^{l-M}Y_N^l|^2+|e^{-\alpha\tau}S_{\tau}^{l-1-M}Y_N^{l-1}|^2}2S_{\tau}^{l-M}Y_N^l\right)1_l(s)ds\nonumber\\
&+e^{\alpha\tau}\int_0^t\sum_{l=1}^{M}S_{\tau}^{M+1-l}\pi_NQ^{\frac12}1_l(s)dW(s)\\
=:&S_{\tau}^Mu_N^0+\int_0^tH_{\tau}(Y_N^M,s)ds+e^{\alpha\tau}\int_0^t\sum_{l=1}^{M}S_{\tau}^{M+1-l}\pi_NQ^{\frac12}1_l(s)dW(s).
\end{align*}
In particular for $t\in[t_{l-1},t_{l}]$,
\begin{align}\label{y1}
\tilde{Y}_N(t)=&Y_N^{l-1}+\mathbf{i}\lambda e^{\alpha\tau}S_{\tau}^{M+1-l}\pi_N\bigg{(}\frac{|S_{\tau}^{l-M}Y_N^{l}|^2+|e^{-\alpha\tau}S_{\tau}^{l-1-M}Y_N^{l-1}|^2}2S_{\tau}^{l-M}Y_N^{l}\bigg{)}(t-t_{l-1})\nonumber\\
&+e^{\alpha\tau}S_{\tau}^{M+1-l}\pi_NQ^{\frac12}\Big{(}W(t)-W(t_{l-1})\Big{)},\\
\text{or equivalently,}\nonumber\\
\tilde{Y}_N(t)=&Y_N^{l}+\mathbf{i}\lambda e^{\alpha\tau}S_{\tau}^{M+1-l}\pi_N\bigg{(}\frac{|S_{\tau}^{l-M}Y_N^{l}|^2+|e^{-\alpha\tau}S_{\tau}^{l-1-M}Y_N^{l-1}|^2}2S_{\tau}^{l-M}Y_N^{l}\bigg{)}(t-t_{l})\nonumber\\\label{y2}
&+e^{\alpha\tau}S_{\tau}^{M+1-l}\pi_NQ^{\frac12}\Big{(}W(t)-W(t_{l})\Big{)}.
\end{align}
Apply It\^{o}'s formula to $t\mapsto v_N(T-t,\tilde{Y}_N(t))$,
\begin{align}
&dv_N(T-t,\tilde{Y}_N(t))\nonumber\\
=&\frac{\partial v_N}{\partial t}(T-t,\tilde{Y}_N(t))dt
+\Big{(}Dv_N,H_{\tau}(Y_N^M,t)dt+e^{\alpha\tau}\sum_{l=1}^{M}S_{\tau}^{M+1-l}\pi_NQ^{\frac12}1_l(t)dW(t)\Big{)}\nonumber\\
&+\frac12Tr\left[\left(e^{\alpha\tau}\sum_{l=1}^{M}S_{\tau}^{M+1-l}\pi_NQ^{\frac12}1_l(t)\right)^*D^2v_N\left(e^{\alpha\tau}\sum_{l=1}^{M}S_{\tau}^{M+1-l}\pi_NQ^{\frac12}1_l(t)\right)\right]dt\nonumber\\
=&\left(Dv_N,H_{\tau}(Y_N^M,t)-H_N(\tilde{Y}_N(t))\right)dt
+\left(Dv_N,e^{\alpha\tau}\sum_{l=1}^{M}S_{\tau}^{M+1-l}\pi_NQ^{\frac12}1_l(t)dW(t)\right)\nonumber\\
&+\frac12\sum_{l=1}^{M}Tr\left[\left(e^{\alpha\tau}S_{\tau}^{M+1-l}\pi_NQ^{\frac12}\right)^*D^2v_N\left(e^{\alpha\tau}S_{\tau}^{M+1-l}\pi_NQ^{\frac12}\right)\right]1_l(t)dt\nonumber\\
&-\frac12\sum_{l=1}^{M}Tr\left[\left(S(T-t)\pi_NQ^{\frac12}\right)^*D^2v_N\left(S(T-t)\pi_NQ^{\frac12}\right)\right]1_l(t)dt,\nonumber
\end{align}
where $Dv_N$ and $D^2v_N$ are evaluated at $(T-t,\tilde{Y}_N(t))$.

The same as before, integrate the formula above from 0 to T, and take expectation based on the fact that
\begin{align*}
v_N(0,\tilde{Y}_N(T))=&E[\phi(Y_N(T))|Y_N(T)=\tilde{Y}_N(T)]
=E[\phi(u_N^M)|Y_N(T)=u_N^M],\nonumber\\
v_N(T,\tilde{Y}_N(0))=&E[\phi(Y_N(T))|Y_N(0)=\tilde{Y}_N(0)]\nonumber\\
=&E\left[\phi\left(S_{\tau}^Mu_N(0)+\int_0^TH_N(Y_N(s))ds+\int_0^TS(T-s)\pi_NQ^{\frac12}dW\right)\bigg{|}Y_N(0)=S_{\tau}^Mu_N(0)\right],\nonumber
\end{align*}
we get
\begin{align}\label{3.2}
E[\phi(u_N^M)]=&E\left[\phi\left(S_{\tau}^Mu_N(0)+\int_0^TH_N(Y_N(s))ds+\int_0^TS(T-s)\pi_NQ^{\frac12}dW\right)\right]\nonumber\\
+&E\int_0^T\left(Dv_N,H_{\tau}(Y_N^M,t)-H_N(\tilde{Y}_N(t))\right)dt\nonumber\\
+&\frac12\sum_{l=1}^{M}E\int_0^TTr\bigg{[}\left(e^{\alpha\tau}S_{\tau}^{M+1-l}\pi_NQ^{\frac12}\right)^*D^2v_N\left(e^{\alpha\tau}S_{\tau}^{M+1-l}\pi_NQ^{\frac12}\right)\nonumber\\
-&\left(S(T-t)\pi_NQ^{\frac12}\right)^*D^2v_N\left(S(T-t)\pi_NQ^{\frac12}\right)\bigg{]}1_l(t)dt.
\end{align}

\textbf{Step 3.} Weak convergence order.

Subtracting (\ref{3.1}) from (\ref{3.2}), we derive
\begin{align}
&E[\phi(u_N^M)]-E[\phi(u_N(T))]\nonumber\\
=&E\bigg{[}\phi\left(S_{\tau}^Mu_N(0)+\int_0^TH_N(Y_N(s))ds+\int_0^TS(T-s)\pi_NQ^{\frac12}dW\right)\nonumber\\
&-\phi\left(S(T)u_N(0)+\int_0^TH_N(Y_N(s))ds+\int_0^TS(T-s)\pi_NQ^{\frac12}dW\right)\bigg{]}\nonumber\\
&+E\int_0^T\left(Dv_N,H_{\tau}(Y_N^M,t)-H_N(\tilde{Y}_N(t))\right)dt\nonumber\\
&+\frac12\sum_{l=1}^{M}E\int_0^TTr\bigg{[}\left(e^{\alpha\tau}S_{\tau}^{M+1-l}\pi_NQ^{\frac12}\right)^*D^2v_N\left(e^{\alpha\tau}S_{\tau}^{M+1-l}\pi_NQ^{\frac12}\right)\nonumber\\
&-\left(S(T-t)\pi_NQ^{\frac12}\right)^*D^2v_N\left(S(T-t)\pi_NQ^{\frac12}\right)\bigg{]}1_l(t)dt.\nonumber\\
=:&~\uppercase\expandafter{\romannumeral1}+\uppercase\expandafter{\romannumeral2}+\uppercase\expandafter{\romannumeral3}.\nonumber
\end{align}
Now we estimate $\uppercase\expandafter{\romannumeral1}$, $\uppercase\expandafter{\romannumeral2}$, and $\uppercase\expandafter{\romannumeral3}$ separately. The constants C below may be different but are all independent of T and $\tau$.

\begin{align}\label{31}
|\uppercase\expandafter{\romannumeral1}|=&\bigg{|}E\left[\phi\left(S_{\tau}^Mu_N(0)+\int_0^TH_N(Y_N(s))ds+\int_0^TS(T-s)\pi_NQ^{\frac12}dW\right)\right]\nonumber\\
&-E\left[\phi\left(S(T)u_N(0)+\int_0^TH_N(Y_N(s))ds+\int_0^TS(T-s)\pi_NQ^{\frac12}dW\right)\right]\bigg{|}\nonumber\\
\leq&C\|\phi\|_{C_b^1}\|S_{\tau}^Mu_N(0)-S(T)u_N(0)\|_0\nonumber\\
\leq&C\|\phi\|_{C_b^1}\|S_{\tau}^M-S(T)\|_{\mathcal{L}(\dot{H}^2,L^2)}\|u_N(0)\|_2\nonumber\\
\leq&C(T+\tau)^{\frac12}e^{-\alpha T}\tau^{\frac12},
\end{align}
where we have used Lemma \ref{operator2} and $u_N(0)=\pi_Nu_0\in \dot{H}^2$.

Noticing $\uppercase\expandafter{\romannumeral2}=0$ for $\lambda=0$, now we consider the nonlinear term $\uppercase\expandafter{\romannumeral2}$ for $\lambda=-1$. 
By using the notation $a_l:=S_{\tau}^{l-M}Y_N^l=u_N^l$ and (\ref{y1}) and (\ref{y2}), we can define $b_l$ in two ways,
\begin{align*}
b_l:=&S(t-T)\tilde{Y}_N(t)1_l(t)\nonumber\\
=&S(t-T)S_{\tau}^{M+1-l}u_N^{l-1}+e^{\alpha\tau}S(t-T)S_{\tau}^{M+1-l}\bigg{(}\mathbf{i}\lambda\pi_N\left(\frac{|e^{-\alpha\tau}u_N^{l-1}|^2+|u_N^l|^2}2u_N^l\right)(t-t_{l-1})\\
&+\pi_NQ^{\frac12}\left(W(t)-W(t_{l-1})\right)\bigg{)},\end{align*}
or equivalently,
\begin{align*}
b_l:=&S(t-T)\tilde{Y}_N(t)1_l(t)\nonumber\\
=&S(t-T)S_{\tau}^{M-l}u_N^l+e^{\alpha\tau}S(t-T)S_{\tau}^{M+1-l}\bigg{(}\mathbf{i}\lambda\pi_N\left(\frac{|e^{-\alpha\tau}u_N^{l-1}|^2+|u_N^l|^2}2u_N^l\right)(t-t_l)\\
&+\pi_NQ^{\frac12}\left(W(t)-W(t_l)\right)\bigg{)}.
\end{align*}
Hence, we have
\begin{align*}
&a_{l-1}-b_l\\=&\left(Id-S(t-T)S_{\tau}^{M+1-l}\right)u_N^{l-1}\\
&-e^{\alpha\tau}S(t-T)S_{\tau}^{M+1-l}\left(\mathbf{i}\lambda\pi_N\left(\frac{|e^{-\alpha\tau}u_N^{l-1}|^2+|u_N^l|^2}2u_N^l\right)(t-t_{l-1})+\pi_NQ^{\frac12}\left(W(t)-W(t_{l-1})\right)\right)
\end{align*}
and
\begin{align*}
a_l-b_l=&\left(Id-S(t-T)S_{\tau}^{M-l}\right)u_N^l\\
&-e^{\alpha\tau}S(t-T)S_{\tau}^{M+1-l}\left(\mathbf{i}\lambda\pi_N\left(\frac{|e^{-\alpha\tau}u_N^{l-1}|^2+|u_N^l|^2}2u_N^l\right)(t-t_l)+\pi_NQ^{\frac12}\left(W(t)-W(t_l)\right)\right),
\end{align*}
where $\|S(t-T)S_{\tau}^{M+1-l}\|_{\mathcal{L}(L^2,L^2)}\leq C$ and
\begin{align*}
\|Id-S(t-T)S_{\tau}^{M-l}\|_{\mathcal{L}(\dot{H}^2,L^2)}\leq\|S(t-T)\|_{\mathcal{L}(L^2,L^2)}\|S(T-t)-S_{\tau}^{M-l}\|_{\mathcal{L}(\dot{H}^2,L^2)}\leq C(T-t+\tau)^{\frac12}\tau^{\frac12}
\end{align*}
 according to Lemma \ref{operator2}. Thus, we have the following estimate
 \begin{align*}
 \|a_l-b_l\|_0\leq& C\Big{[}(T-t+\tau)^{\frac12}\tau^{\frac12}\|u_N^l\|_2
 +\tau\Big{(}\|u_N^{l-1}\|_1^2+\|u_N^l\|_1^2\Big{)}\|u_N^l\|_0+\|\pi_NQ^{\frac12}(W(t)-W(t_l))\|_0\Big{]}.
 \end{align*}
 Also,  $\|a_{l-1}-b_l\|_0$ can be estimated in the same way. 
Thus, based on \eqref{dv}, we have
\begin{align}\label{begin}
|\uppercase\expandafter{\romannumeral2}|=&\left|E\int_0^T\left(Dv_N,H_{\tau}(Y_N^M,t)-H_N(\tilde{Y}_N(t))\right)dt\right|
\leq C\|\phi\|_{C_b^1}\int_0^TE\|H_{\tau}(Y_N^M,t)-H_N(\tilde{Y}_N(t))\|_0dt,
\end{align}
where
\begin{align*}
&H_{\tau}(Y_N^M,t)-H_N(\tilde{Y}_N(t))\nonumber\\
=&\sum_{l=1}^{M}\left[e^{\alpha\tau}S_{\tau}^{M+1-l}\pi_N\left(\mathbf{i}\lambda\frac{|e^{-\alpha\tau}a_{l-1}|^2+|a_l|^2}2a_l\right)-S(T-t)\pi_N\left(\mathbf{i}\lambda|b_l|^2b_l\right)\right]1_l(t)\nonumber\\
=&\frac{\lambda}2\mathbf{i}\sum_{l=1}^{M}\bigg{[}e^{\alpha\tau}\Big{(}S_{\tau}^{M+1-l}-S(T-t)\Big{)}\pi_N\left(|e^{-\alpha\tau}a_{l-1}|^2a_l\right)+(e^{-\alpha\tau}-1)S(T-t)\pi_N\left(|a_{l-1}|^2a_l\right)\\
&+S(T-t)\pi_N\left(|a_{l-1}|^2a_l-|b_l|^2b_l\right)\bigg{]}1_l(t)\\
&+\frac{\lambda}2\mathbf{i}\sum_{l=1}^{M}\bigg{[}e^{\alpha\tau}\Big{(}S_{\tau}^{M+1-l}-S(T-t)\Big{)}\pi_N\left(|a_l|^2a_l\right)+(e^{\alpha\tau}-1)S(T-t)\pi_N\left(|a_{l}|^2a_l\right)\\
&+S(T-t)\pi_N\left(|a_l|^2a_l-|b_l|^2b_l\right)\bigg{]}1_l(t)\\
=&\frac{\lambda}2\mathbf{i}\bigg{[}\sum_{l=1}^{M}e^{\alpha\tau}\Big{(}S_{\tau}^{M+1-l}-S(T-t)\Big{)}\pi_N\left(|e^{-\alpha\tau}a_{l-1}|^2a_l\right)1_l(t)+\sum_{l=1}^{M}S(T-t)\pi_N\Big{(}|a_{l-1}|^2\left(a_l-b_l\right)\Big{)1_l(t)}\\
&+\sum_{l=1}^{M}S(T-t)\pi_N\Big{(}|b_l|^2(a_{l-1}-b_l)\Big{)}1_l(t)+\sum_{l=1}^{M}S(T-t)\pi_N\Big{(}a_{l-1}b_l(\overline{a}_{l-1}-\overline{b}_l)\Big{)}1_l(t)\\
&+\sum_{l=1}^{M}(e^{-\alpha\tau}-1)S(T-t)\pi_N\Big{(}|a_{l-1}|^2a_l\Big{)1_l(t)}+\sum_{l=1}^{M}e^{\alpha\tau}\Big{(}S_{\tau}^{M+1-l}-S(T-t)\Big{)}\pi_N\left(|a_l|^2a_l\right)1_l(t)\\
&+\sum_{l=1}^{M}S(T-t)\pi_N\Big{(}|a_l|^2\left(a_l-b_l\right)\Big{)}1_l(t)+\sum_{l=1}^{M}S(T-t)\pi_N\Big{(}|b_l|^2(a_l-b_l)\Big{)}1_l(t)\\
&+\sum_{l=1}^{M}S(T-t)\pi_N\Big{(}a_lb_l(\overline{a}_l-\overline{b}_l)\Big{)}1_l(t)\bigg{]}+\sum_{l=1}^{M}(e^{\alpha\tau}-1)S(T-t)\pi_N\Big{(}|a_{l}|^2a_l\Big{)1_l(t)}\nonumber\\
:=&\frac{\lambda}{2}\mathbf{i}\Big{[}\uppercase\expandafter{\romannumeral2}_1^{l-1}+\uppercase\expandafter{\romannumeral2}_2^{l-1}+\uppercase\expandafter{\romannumeral2}_3^{l-1}+\uppercase\expandafter{\romannumeral2}_4^{l-1}+\uppercase\expandafter{\romannumeral2}_5^{l-1}+\uppercase\expandafter{\romannumeral2}_1^{l}+\uppercase\expandafter{\romannumeral2}_2^l+\uppercase\expandafter{\romannumeral2}_3^l+\uppercase\expandafter{\romannumeral2}_4^l+\uppercase\expandafter{\romannumeral2}_5^l\Big{]}.
\end{align*}

If $\lambda=-1$, thanks to the uniform estimations of 0-norm, 1-norm and 2-norm of $u_N^k$, we have the following estimates.

By the embedding $H^1\hookrightarrow L^{\infty}$ in $\mathbb{R}^1$, we have following exponential estimates
\begin{align*}
E\|\uppercase\expandafter{\romannumeral2}_1^{l-1}\|_0\leq&\frac12\sum_{l=1}^{M}\|S_{\tau}^{M+1-l}-S(T-t)\|_{\mathcal{L}(\dot{H}^2,L^2)}E\Big{\|}\pi_N\left(|e^{-\alpha\tau}u_N^{l-1}|^2u_N^l\right)\Big{\|}_21_l(t)\\
\leq&C\sum_{l=1}^{M}\|S_{\tau}^{M+1-l}-S(T-t)\|_{\mathcal{L}(\dot{H}^2,L^2)}E\left[\|u_N^{l-1}\|_1^4+\|u_N^l\|_2^2\right]1_l(t)\\
\leq&C(T-t+\tau)^{\frac12}e^{-\alpha(T-t)}\tau^{\frac12},
\end{align*}
\begin{align*}
E\|\uppercase\expandafter{\romannumeral2}_2^{l-1}\|_0\leq&Ce^{-\alpha(T-t)}E\sum_{l=1}^{M}\|a_{l-1}\|_1^2\|a_l-b_l\|_01_l(t)\\
\leq &Ce^{-\alpha(T-t)}E\sum_{l=1}^{M}\|u_N^{l-1}\|_1^2\Bigg{[}C(T-t+\tau)^{\frac12}\tau^{\frac12}\|u_N^l\|_2\\
&+C\left[\Big{(}\|u_N^{l-1}\|_1^2+\|u_N^l\|_1^2\Big{)}\|u_N^l\|_0\tau+\|\pi_NQ^{\frac12}(W(t)-W(t_l))\|_0\right]\Bigg{]}1_l(t)\\
\leq&C(T-t+1)^{\frac12}e^{-\alpha(T-t)}\tau^{\frac12},\\
E\|\uppercase\expandafter{\romannumeral2}_5^{l-1}\|_0\leq& e^{-\alpha(T-t)}(1-e^{-\alpha\tau})E\bigg{[}\|u_N^{l-1}\|_1^2\|u_N^l\|_0\bigg{]}\leq Ce^{-\alpha(T-t)}\tau,
\end{align*}
and their integrals are also of order $\frac12$. 
$\uppercase\expandafter{\romannumeral2}_1^l$, $\uppercase\expandafter{\romannumeral2}_2^l$  and $\uppercase\expandafter{\romannumeral2}_5^l$ can also be estimated in the same way, where we have used the fact that for any $T>0$, the integral $\int_0^T(T-t+\tau)^{\frac12}e^{-\alpha(T-t)}dt$ is bounded and $\sum_{l=1}^{M}1_l(t)=1$.

Other terms are proved in the same procedure by using the fact that
\begin{align*}
\|b_l\|_{L^{\infty}}^2\leq C\|S(t-T)S_{\tau}^{M-l}\|_{\mathcal{L}(\dot{H}^1,\dot{H}^1)}^2[\|u_N^l\|_1^4+\|u_N^{l-1}\|_1^4+\|\pi_NQ^{\frac12}\delta W_l\|_1^2]
\end{align*}
and
\begin{align*}
\|a_lb_l\|_{L^{\infty}}\leq\frac12[\|a_l\|_{L^{\infty}}^2+\|b_l\|_{L^{\infty}}^2].
\end{align*}
Finally, we have
\begin{align}\label{32}
|\uppercase\expandafter{\romannumeral2}|\leq&C\tau^{\frac12}.
\end{align}
Next is the estimate of $\uppercase\expandafter{\romannumeral3}$, which is similar to the same part in the proof of Theorem \ref{weakconvergence}.
\begin{align*}
\uppercase\expandafter{\romannumeral3}=&\frac12\sum_{l=1}^{M}E\int_0^TTr\bigg{[}\left(e^{\alpha\tau}S_{\tau}^{M+1-l}\pi_NQ^{\frac12}\right)^*D^2v_N\left(e^{\alpha\tau}S_{\tau}^{M+1-l}\pi_NQ^{\frac12}\right)\nonumber\\
&-\left(S(T-t)\pi_NQ^{\frac12}\right)^*D^2v_N\left(S(T-t)\pi_NQ^{\frac12}\right)\bigg{]}1_l(t)dt\nonumber\\
=&\frac12\sum_{l=1}^{M}E\int_0^TTr\bigg{[}\left(\left(e^{\alpha\tau}S_{\tau}^{M+1-l}-S(T-t)\right)\pi_NQ^{\frac12}\right)^*D^2v_N\left(\left(e^{\alpha\tau}S_{\tau}^{M+1-l}-S(T-t)\right)\pi_NQ^{\frac12}\right)\bigg{]}\nonumber\\
&+2Tr\bigg{[}\left(\left(e^{\alpha\tau}S_{\tau}^{M+1-l}-S(T-t)\right)\pi_NQ^{\frac12}\right)^*D^2v_N\left(S(T-t)\pi_NQ^{\frac12}\right)\bigg{]}1_l(t)dt\nonumber\\
=&\frac12\sum_{l=1}^ME\int_0^TTr\Bigg{[}
e^{2\alpha\tau}\left(\left(S_{\tau}^{M+1-l}-S(T-t)\right)\pi_NQ^{\frac12}\right)^*D^2v_N\left(\left(S_{\tau}^{M+1-l}-S(T-t)\right)\pi_NQ^{\frac12}\right)\\
&+2e^{2\alpha\tau}\left(\left(S_{\tau}^{M+1-l}-S(T-t)\right)\pi_NQ^{\frac12}\right)^*D^2v_N\left(S(T-t)\pi_NQ^{\frac12}\right)\\
&+(e^{2\alpha\tau}-1)\left(S(T-t)\pi_NQ^{\frac12}\right)^*D^2v_N\left(S(T-t)\pi_NQ^{\frac12}\right)
\Bigg{]}1_l(t)dt\\
:=&\frac12\sum_{l=1}^{M}E\int_0^T(A_l+2B_l+C_l)1_l(t)dt,
\end{align*}
where $A_l$, $B_l$ and $C_l$ satisfy
\begin{align*}
E|A_l|\leq& C\|S_{\tau}^{M+1-l}-S(T-t))\|_{\mathcal{L}(\dot{H}^2,L^2)}^2\|\pi_NQ^{\frac12}\|^2_{\mathcal{L}(L^2,\dot{H}^2)}\|\phi\|_{C_b^2}\leq C(T-t+\tau)e^{-2\alpha(T-t)}\tau,\\
E|B_l|\leq&C\|S_{\tau}^{M+1-l}-S(T-t))\|_{\mathcal{L}(\dot{H}^2,L^2)}\|\pi_NQ^{\frac12}\|^2_{\mathcal{L}(L^2,\dot{H}^2)}\|\phi\|_{C_b^2}\|S(T-t)\|_{\mathcal{L}(L^2,L^2)}\nonumber\\
\leq&C(T-t+\tau)^{\frac12}e^{-2\alpha(T-t)}\tau^{\frac12}\nonumber
\end{align*}
and
\begin{align*}
E|C_l|\leq C\tau\|\pi_NQ^{\frac12}\|^2_{\mathcal{L}(L^2,L^2)}\|\phi\|_{C_b^2}\|S(T-t)\|_{\mathcal{L}(L^2,L^2)}^2\leq Ce^{-2\alpha(T-t)}\tau.
\end{align*}
It follows
\begin{align}\label{33}
|\uppercase\expandafter{\romannumeral3}|\leq C\tau^{\frac12}.
\end{align}
We can conclude from (\ref{31}), (\ref{32}) and (\ref{33}) that, 
\begin{align*}
\Big{|}E\left[\phi(u_N(T))\right]-E\left[\phi(u_N^M)\right]\Big{|}
\leq C\tau^{\frac12},
\end{align*}
where $C$ is independent of $T,~M$ and $N$.

\end{proof}

\subsection{Convergence order between invariant measures $\mu_N$ and $\mu_N^{\tau}$}

\begin{tm}
For $\lambda=0$ or $-1$, assume that $u_0\in\dot{H}^2$ and $\|Q^{\frac12}\|_{\mathcal{HS}(L^2,\dot{H}^2)}<\infty$, the error between invariant measures $\mu_N$ and $\mu_N^{\tau}$ is of order $\frac12$, i.e.,
\begin{align*}
\left|\int_{V_N}\phi(y)d\mu_N(y)-\int_{V_N}\phi(y)d\mu^{\tau}_N(y)\right|<C\tau^{\frac12},\;\;\forall\;\phi\in C_b^2(L^2).
\end{align*}
\end{tm}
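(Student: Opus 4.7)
The plan is to follow, almost verbatim, the strategy used in Section 3.4 for $|\int\phi\,d\mu-\int\phi\,d\mu_N|$, replacing the spatial weak error by the temporal weak error. The three ingredients I would invoke are: the ergodicity of the spatial semidiscretization $u_N(t)$ (Theorem \ref{spaceergodic}); the ergodicity of the Markov chain $\{u_N^k\}_{k\in\mathbb{N}}$ just established; and the time-independent weak error $|E\phi(u_N(T))-E\phi(u_N^M)|\le C\tau^{1/2}$ from Theorem \ref{result2} together with its corollary extending the bound to any $t\in[(M-1)\tau,(M+1)\tau]$.

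Concretely, for a deterministic initial datum $u_N(0)=u_N^0=\pi_Nu_0$, the two ergodic theorems give
\[
\lim_{T\to\infty}\frac{1}{T}\int_0^T E\phi(u_N(t))\,dt = \int_{V_N}\phi\,d\mu_N,\qquad \lim_{M\to\infty}\frac{1}{M}\sum_{k=1}^{M}E\phi(u_N^k) = \int_{V_N}\phi\,d\mu_N^{\tau}.
\]
Taking $T=M\tau$ and comparing the continuous time average with the discrete Ces\`aro average on the grid $\{t_k\}_{k=0}^M$ yields
\begin{align*}
&\left|\frac{1}{M\tau}\int_0^{M\tau}E\phi(u_N(t))\,dt - \frac{1}{M}\sum_{k=1}^{M}E\phi(u_N^k)\right| \\
&\quad\le \frac{1}{M\tau}\sum_{k=1}^{M}\int_{t_{k-1}}^{t_k}\left|E\phi(u_N(t))-E\phi(u_N^k)\right|dt \le C\tau^{1/2},
\end{align*}
where the last inequality uses the corollary to Theorem \ref{result2} to bound each pointwise difference by $C\tau^{1/2}$ uniformly in $k$ (and hence in $M$). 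Letting $M\to\infty$ and combining with the two ergodic limits gives the desired estimate $\left|\int_{V_N}\phi\,d\mu_N - \int_{V_N}\phi\,d\mu_N^{\tau}\right|\le C\tau^{1/2}$.

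The main, and in fact only, subtlety is that the bound in the middle step must be uniform in $M$: the weak error from Theorem \ref{result2} has to be time-independent, since any growth of the constant with $T$ would ruin the argument as $T=M\tau\to\infty$. This uniformity was precisely engineered in the proof of Theorem \ref{result2} through the exponential decay of $S(t)$, the exponential bounds on $\|S_\tau^k-S(t)\|$ from Lemma \ref{operator2}, and the uniform moment estimates on $\|u_N^k\|_0$, $\|u_N^k\|_1$ and $\|u_N^k\|_2$. Beyond invoking these existing results, no new computation is needed; the argument is a direct transcription of the spatial case to the temporal one.
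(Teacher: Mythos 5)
Your proposal is correct and follows essentially the same route as the paper: both combine the two ergodic limits with the time-independent weak error of Theorem \ref{result2} (and its corollary for off-grid times) via a Ces\`aro comparison of the continuous time average with the discrete one. The only difference is a harmless shift in the summation index, so no further comment is needed.
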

\begin{proof}
By the ergodicity of stochastic processes $u_N$ and $u_N^k$, we have
\begin{align}
\lim_{T\to\infty}\frac{1}{T}\int_0^TE\phi\big{(}u_N(t)\big{)}dt=&\int_{V_N}\phi(y)d\mu_N(y),\\
\lim_{M\to\infty}\frac{1}{M}\sum_{k=0}^{M-1}E\phi(u_N^k)=&\int_{V_N}\phi(y)d\mu_N^{\tau}(y)
\end{align}
for any $\phi\in C_b^2(L^2)$.
As the weak error is proved to be independent of step $k$ and time $t$ in Theorem \ref{result2}, it turns out that for a fixed $\tau$,

\begin{align*}
&\left|\int_{V_N}\phi(y)d\mu_N(y)-\int_{V_N}\phi(y)d\mu^{\tau}_N(y)\right|\\
\le&\lim_{\substack{M\to\infty,\\T=M\tau\to\infty}}\frac{1}{T}\sum_{k=0}^{M-1}\int_{t_k}^{t_{k+1}}\left|E\phi\big{(}u_N(t)\big{)}-E\phi(u_N^k)\right|dt
\leq C\tau^{\frac12}.
\end{align*}
\end{proof}
\begin{rk}
For the case $\lambda=1$, if the 1-norm and 2-norm of $u_N^k$ is also uniformly bounded, we can also get order $\frac12$ for both time-independent weak error and error between invariant measures. If not, based on the fact $\|\cdot\|_{s+1}\leq N\|\cdot\|_s$, we can get the weak error depend on $N$
$$\Big{|}E[\phi(u_N(T))]-E[\phi(u_N^M)]\Big{|}\leq CN^4\tau^{\frac12},$$
as well as the error between invariant measures. 
\end{rk}

\section{\textsc{\Large{A}ppendix}}

\subsection{The proof of proposition \ref{1norm}}\label{5.1}

$\romannumeral1)$ 
As it is proved in Part 3 of Theorem \ref{spaceergodic} that $E\|u_N(t)\|_0^2<C$, we assume further that $E\|u_N(t)\|_0^{2n}<C,\;\forall\;n=1,\cdots,p-1$. Denoting $dM_1:=2Re\left({u}_N,\pi_NQ^{\frac{1}{2}}dW\right)$, then It\^o's formula and \eqref{un} yields
\begin{align*}
d\|u_N(t)\|_0^{2p}=&p\|u_N(t)\|_0^{2(p-1)}d\|u_N(t)\|_0^2
+\frac12p(p-1)\|u_N(t)\|_0^{2(p-2)}d\langle M_1\rangle\\
\le&-2\alpha p\|u_N(t)\|_0^{2p}dt+p\|u_N(t)\|_0^{2(p-1)}dM_1(t)+2p(2p-1)\sum_{m=1}^N\eta_m\|u_N(t)\|_0^{2(p-1)}dt,
\end{align*}
where $\langle\cdot\rangle$ denotes the quadratic variation process and in the last step we used the fact
\begin{align*}
d\langle M_1\rangle=&4\left\langle Re\sum_{m=1}^N\int_0^1\overline{u}_N(s)\sqrt{\eta_m}e_m(x)dx(d\beta_{m,1}+\mathbf{i}d\beta_{m,2})\right\rangle\\
=&4\sum_{m=1}^N\left[\left(Re\int_0^1\overline{u}_N(t,x)\sqrt{\eta_m}e_m(x)dx\right)^2+\left(Im\int_0^1\overline{u}_N(t,x)\sqrt{\eta_m}e_m(x)dx\right)^2\right]dt\\
\le&8\sum_{m=1}^N\eta_m\|u_N(t)\|_0^2dt.
\end{align*}
Taking expectation on both sides of above equation, we obtain
\begin{align*}
\frac{d}{dt}E\|u_N(t)\|_0^{2p}
\le&-2\alpha pE\|u_N(t)\|_0^{2p}+2p(2p-1)\sum_{m=1}^N\eta_mE\|u_N(t)\|_0^{2(p-1)}\\
\le&-2\alpha pE\|u_N(t)\|_0^{2p}+C
\end{align*}
by induction. 
Then multiplying $e^{2\alpha pt}$ to both sides of above equation yields the result.

$\romannumeral2)$
The proof in this part is similar to the proof of Lemma 2.5 in \cite{debussche}.
According to the Gagliardo-Nirenberg interpolation inequality, there exists a positive constant $c_0$, such that
\begin{align}\label{5.1}
\frac58\lambda\|u_N(t)\|_{L^4}^4\leq\|u_N(t)\|_{L^4}^4\leq\frac14\|\nabla u_N(t)\|_0^2+\frac12c_0\|u_N(t)\|_0^6.
\end{align}
Thus,
\begin{align}
0\leq\mathcal{H}(u_N(t)):=&\frac12\|\nabla u_N(t)\|_0^2-\frac{\lambda}4\|u_N(t)\|_{L^4}^4+c_0\|u_N(t)\|_0^6\nonumber\\\label{5.2}
\leq&\frac23\left(\|\nabla u_N(t)\|_0^2-\lambda\|u_N(t)\|_{L^4}^4+2c_0\|u_N(t)\|_0^6\right).
\end{align}
Applying It\^o's formula to $\mathcal{H}(u_N(t))$, it leads to
\begin{align*}
d\mathcal{H}(u_N(t))=&\bigg{[}-\alpha\|\nabla u_N(t)\|_0^2+\alpha\lambda\|u_N(t)\|_{L^4}^4-6\alpha c_0\|u_N(t)\|_0^6-2\lambda\int_0^1|u_N|^2\sum_{m=1}^N\eta_m|e_m|^2dx
\\
&+\sum_{m=1}^Nm^2\eta_m+6c_0\|u_N(t)\|_0^4\sum_{m=1}^N\eta_m
+12c_0\|u_N(t)\|_0^2\|\pi_NQ^{\frac12}u_N(t)\|_0^2\bigg{]}dt\\
&+6c_0\|u_N(t)\|_0^4Re\left(u_N,\pi_NQ^{\frac12}dW\right)-Re\left(\Delta u_N(t)+\lambda|u_N(t)|^2u_N(t),\pi_NQ^{\frac12}dW\right),
\end{align*}
where we have used the fact $\left((Id-\pi_N)v,v_N\right)=0,\,\forall\,v\in \dot{H}^0,\,v_N\in V_N$.
By the following estimates
\begin{align*}
&-2\lambda\int_0^1|u_N|^2\sum_{m=1}^N\eta_m|e_m|^2dx\leq0,\\
&6c_0\|u_N(t)\|_0^4\sum_{m=1}^N\eta_m+12c_0\|u_N(t)\|_0^2\|\pi_NQ^{\frac12}u_N(t)\|_0^2
\leq4\alpha c_0\|u_N(t)\|_0^6+C
\end{align*}
and \eqref{5.2}, we have
\begin{align}
&d\mathcal{H}(u_N(t))\le\bigg{[}-\alpha\|\nabla u_N(t)\|_0^2+\alpha\lambda\|u_N(t)\|_{L^4}^4-2\alpha c_0\|u_N(t)\|_0^6+\sum_{m=1}^Nm^2\eta_m+C\bigg{]}dt\nonumber\\
&+6c_0\|u_N(t)\|_0^4Re\left(u_N(t),\pi_NQ^{\frac12}dW(t)\right)-Re\left(\Delta u_N(t)+\lambda|u_N(t)|^2u_N(t),\pi_NQ^{\frac12}dW\right)\nonumber\\\label{5.3}
\le&-\frac32\alpha\mathcal{H}(u_N(t))dt+Cdt+dM_2,
\end{align}
where 
$$dM_2:=6c_0\|u_N\|_0^4Re\left(u_N,\pi_NQ^{\frac12}dW\right)-Re\left(\Delta u_N+\lambda|u_N|^2u_N,\pi_NQ^{\frac12}dW\right).$$
 Taking expectation, we derive

\begin{align*}
dE\mathcal{H}(u_N(t))
\leq&-\frac32\alpha E\mathcal{H}(u_N(t))dt+Cdt.
\end{align*}
Hence, by multiplying $e^{\frac32\alpha t}$ to both sides of the equation above and then taking integral from $0$ to $t$, we get the uniform boundedness for $p=1$.
By induction, we assume that the results hold for $p-1$.
Then, based on the following estimates (see \cite{debussche})
\begin{align*}
\left<6\|u_N\|_0^4Re\left(u_N,\pi_NQ^{\frac12}dW\right)\right>^2\le&C\|Q^{\frac12}\|_{\mathcal{HS}(L^2,L^2)}^2\|u_N\|_0^{10}dt,\\
\left<Re\left(\Delta u_N+\lambda|u_N|^2u_N,\pi_NQ^{\frac12}dW\right)\right>^2\le&C\|Q^{\frac12}\|_{\mathcal{HS}(L^2,\dot{H}^1)}^2\left(\|\nabla u_N\|_0^2+\|u_N\|_0^{10}\right)dt
\end{align*}
and \eqref{5.3}, we have
\begin{align}
d\mathcal{H}(u_N(t))^p=&p\mathcal{H}(u_N(t))^{p-1}d\mathcal{H}(u_N(t))+\frac12p(p-1)\mathcal{H}(u_N(t))^{p-2}d\langle M_2\rangle\nonumber\\
\le&-\frac32\alpha p\mathcal{H}(u_N(t))^pdt+Cp\mathcal{H}(u_N(t))^{p-1}dt+p\mathcal{H}(u_N(t))^{p-1}dM_2\nonumber\\\label{5.4}
&+Cp(p-1)\mathcal{H}(u_N(t))^{p-2}\left(\|\nabla u_N(t)\|_0^2+\|u_N(t)\|_0^{10}\right)dt.
\end{align}
From \eqref{5.1}, we deduce that
\begin{equation*}
\mathcal{H}(u_N(t))\ge
\left\{
\begin{aligned}
&\frac12\|\nabla u_N(t)\|_0^2+c_0\|u_N(t)\|_0^6,\;\lambda=0\;\text{or}\,-1,\\
&\frac7{16}\|\nabla u_N(t)\|_0^2+\frac78c_0\|u_N(t)\|_0^6,\;\lambda=1.
\end{aligned}
\right.
\end{equation*}
As a result, the last term in \eqref{5.4} can be estimated as
\begin{align}
&Cp(p-1)\mathcal{H}(u_N(t))^{p-2}\left(\|\nabla u_N(t)\|_0^2+\|u_N(t)\|_0^{10}\right)\nonumber\\\label{5.5}
\le& \left(C\mathcal{H}(u_N(t))+C\mathcal{H}(u_N(t))^{\frac53}\right)\mathcal{H}(u_N(t))^{p-2}
\le C\mathcal{H}(u_N(t))^{p-1}+\frac12\alpha p\mathcal{H}(u_N(t))^p,
\end{align}
where in the last step we used the inequality of arithmetic and geometric means
\begin{align*}
C(\mathcal{H}(u_N(t))^2\cdot\mathcal{H}(u_N(t))^2\cdot\mathcal{H}(u_N(t)))^{\frac13}
\le\frac{\frac34\alpha p\mathcal{H}(u_N(t))^2+\frac34\alpha p\mathcal{H}(u_N(t))^2+C\mathcal{H}(u_N(t))}{3}.
\end{align*}
Gethering \eqref{5.4} and \eqref{5.5} and taking expectation, we obtain
\begin{align*}
dE\mathcal{H}(u_N(t))^p\le-\alpha pE\mathcal{H}(u_N(t))^pdt+Cdt
\end{align*}
by induction, which complete the proof by multiplying $e^{\alpha pt}$ on both sides of above equation.

$\romannumeral3)$ 
We define a functional
\begin{align*}
f(u)=\int_0^1|\Delta u|^2dx+\lambda Re\int_0^1(\Delta\overline{u})|u|^2udx,
\end{align*}
which satisfies 
\begin{align}\label{5.6}
\|\Delta u\|_0^2\le2f(u)+C\|u\|_1^6
\end{align}
based on the continuous embedding $H^1\hookrightarrow L^6$ and
$\left|\lambda Re\int_0^1\Delta\overline{u}|u|^2udx\right|
\le\frac12\|\Delta u\|_0^2+\frac12\|u\|_{L^6}^6
\le\frac12\|\Delta u\|_0^2+C\|u\|_{1}^6.
$ 
The It\^o's formula applied to $f(u_N)$ yields
\begin{align}
df(u_N)=&Df(u_N)\Big{(}\left(\bi\Delta u_N+\bi\lambda|u_N|^2u_N-\alpha u_N\right)dt\Big{)}
+Df(u_N)\Big{(}\pi_NQ^{\frac12}dW\Big{)}\nonumber\\
&+\frac12D^2f(u_N)(\pi_NQ^{\frac12}dW,\pi_NQ^{\frac12}dW)\nonumber\\\label{2norm}
=:&\mathcal{A}+\mathcal{B}+\mathcal{C},
\end{align}
where
\begin{align*}
Df(u)(\varphi)
=&Re\int_0^1\Big{[}2\Delta\overline{u}\Delta\varphi
+2\lambda(\Delta\overline{u})u Re(\overline{u}\varphi)
+\lambda(\Delta\overline{u})|u|^2\varphi
+\lambda(\Delta(|u|^2u))\overline{\varphi}\Big{]}dx,\\
D^2f(u)(\varphi,\psi)
=&Re\int_0^1\Big{[}2\Delta\overline{\varphi}\Delta\psi+2\lambda(\Delta\overline{u})u Re(\overline{\varphi}\psi)
+2\lambda(\Delta\overline{u})\varphi Re(\overline{u}\psi)
+2\lambda(\Delta\overline{\varphi})u Re(\overline{u}\psi)\\
&+2\lambda(\Delta\overline{u})\psi Re(\overline{\varphi}u)
+2\lambda(\Delta\overline{\psi})u Re(\overline{u}\varphi)
+\lambda(\Delta\overline{\varphi})|u|^2\psi
+\lambda(\Delta\overline{\psi})|u|^2\varphi\Big{]}dx
\end{align*}
and $E[\mathcal{B}]=0$. Now we estimate $\mathcal{A}$ and $\mathcal{C}$ respectively.
\begin{align*}
 E[\mathcal{A}]=&-2\alpha E[f(u_N)]dt
+Re E\int_0^1\Big{[}4\lambda\bi(\Delta\overline{u}_N)u_N|\nabla u_N|^2
+2\lambda\bi(\Delta\overline{u}_N)\overline{u}_N(\nabla u_N)^2\Big{]}dxdt\\
&+Re E\int_0^1\Big{[}\lambda^2\bi(\Delta\overline{u}_N)|u_N|^4
-4\alpha\lambda(\Delta\overline{u}_N)u_N|u_N|^2\Big{]}dxdt\\
&+Re E\int_0^1\Big{[}-4\alpha\lambda |u_N|^2|\nabla u_N|^2-2\alpha\lambda (\nabla u_N)^2\overline{u}_N^2\Big{]}dxdt\\
=:&-2\alpha E[f(u_N)]dt+\mathcal{A}_1dt+\mathcal{A}_2dt+\mathcal{A}_3dt,
\end{align*}
where we have used the fact $\Delta(|u|^2u)=2|u|^2\Delta u+4u|\nabla u|^2+2\overline{u}(\nabla u)^2+u^2\Delta\overline{u}$ and $\mathcal{A}_1$, $\mathcal{A}_2$ and $\mathcal{A}_3$ are estimated as follows.
\begin{align*}
|\mathcal{A}_1|:=&\left|Re E\int_0^1\Big{[}4\lambda\bi(\Delta\overline{u}_N)u_N|\nabla u_N|^2
+2\lambda\bi(\Delta\overline{u}_N)\overline{u}_N(\nabla u_N)^2\Big{]}dx\right|\\
\le&\frac{\alpha}{16} E\|\Delta u_N\|_0^2+C E\left[\|u_N\|_{L^{\infty}}^2\|\nabla u_N\|_{L^4}^2\right]\\
\le&\frac{\alpha}{16} E\|\Delta u_N\|_0^2+C E\left[\|u_N\|_{L^{\infty}}^4+\|\Delta u_N\|_0\|\nabla u_N\|_{0}^3\right]\\
\le&\frac{\alpha}8 E\|\Delta u_N\|_0^2+C E\left[\|u_N\|_1^{4}+\|u_N\|_1^{6}\right]\\
\le&\frac{\alpha}8 E\|\Delta u_N\|_0^2+C,
\end{align*}
where we have used the uniform boundedness of $\|u_N\|_1^{2p}$ for $p\ge1$ in $\romannumeral2)$, the continuous embedding $H^1\hookrightarrow L^{\infty}$ for $\R^1$ and the interpolation of $L^4$ between $L^2$ and $H^1$. Similarly, based on the continuous embedding $H^1\hookrightarrow L^{6}$ and $H^1\hookrightarrow L^{8}$, we have
\begin{align*}
|\mathcal{A}_2|:=&\left|Re E\int_0^1\Big{[}\lambda^2\bi(\Delta\overline{u}_N)|u_N|^4
-4\alpha\lambda(\Delta\overline{u}_N)u_N|u_N|^2\Big{]}dx\right|\\
\le&\frac{\alpha}8 E\|\Delta u_N\|_0^2+C E[\|u_N\|_{L^8}^{8}+\|u_N\|_{L^6}^{6}]\\
\le&\frac{\alpha}8 E\|\Delta u_N\|_0^2+C
\end{align*}
and
\begin{align*}
|\mathcal{A}_3|:=&\left|Re E\int_0^1\Big{[}-4\alpha\lambda |u_N|^2|\nabla u_N|^2-2\alpha\lambda (\nabla u_N)^2\overline{u}_N^2\Big{]}dx\right|
\le C E\|u_N\|_1^4\le C.
\end{align*}
Thus, we obtain
\begin{align*}
 E[\mathcal{A}]\le-2\alpha E[f(u_N)]dt+\frac{\alpha}4 E\|\Delta u_N\|_0^2+C.
\end{align*}
The estimate of $\mathcal{C}$ is similar with that of $\mathcal{A}$, and we derive 
$ E[\mathcal{C}]\le\frac{\alpha}4 E\|\Delta u_N\|_0^2+C.
$ 
 Taking expectation on both sides of \eqref{2norm} yields
\begin{align*}
d E f(u_N)+2\alpha E f(u_N)dt\le\frac{\alpha}2 E\|\Delta u_N\|_0^2dt+Cdt
\le\alpha E f(u_N)dt+Cdt.
\end{align*}
Multiplying both sides of above equation by $e^{\alpha t}$ and taking integral from $0$ to $t$, we conclude the uniform boundedness of $ E f(u_N(t))$
\begin{align*}
 E f(u_N(t))\le e^{-\alpha t} E f(u_N(0))+\frac{C}{\alpha}(1-e^{-\alpha t}),
\end{align*}
which yields the uniform boundedness of $ E\|\Delta u_N\|_0^2$ based on \eqref{5.6}. As the norm $\|u_N\|_2$ is equivalent to $\|\Delta u_N\|_0$ under Dirichlet boundary condition, we complete the proof.
\qed

\subsection{The proof of uniqueness of the solution for (\ref{BE})}\label{5.2}
Suppose that $U$ and $W$ are two solutions of the scheme, then it follows
$$U-W=\mathbf{i}\tau\Delta\big{(}U-W\big{)}+\mathbf{i}\lambda\frac{\tau}2\pi_N\Big{[}\big{(}|U|^2U-|W|^2W\big{)}+|e^{-\alpha\tau}u_N^{k-1}|^2(U-W)\Big{]}.$$
Multiply the equation above by $\overline{U}-\overline{W}$, integrate in space and take the real and imaginary part respectively, we have
\begin{align*}
&\|U-W\|^2_0\leq\frac{\tau}2\|f(U)-f(W)\|_{L^{\frac43}}\|U-W\|_{L^4},\\
&\|\nabla(U-W)\|^2_0\leq\frac12\|f(U)-f(W)\|_{L^{\frac43}}\|U-W\|_{L^4}+\frac{\lambda}2\|e^{-\alpha\tau}u_N^{k-1}\|_{L^4}^2\|U-W\|_{L^4}^2,
\end{align*}
where $f(U):=|U|^2U$ and
\begin{align*}
\|f(U)-f(W)\|_{L^{\frac43}}=&\left(\int_0^1\Big{|}|U|^2U-|W|^2W\Big{|}^{\frac43}dx\right)^{\frac34}\\
=&\left(\int_0^1\Big{|}|U|^2(U-W)+|W|^2(U-W)+UW(\overline{U}-\overline{W})\Big{|}^{\frac43}dx\right)^{\frac34}\\
\leq&\left(\int_0^1\Big{|}|U|^2+|W|^2+|UW|\Big{|}^2dx\right)^{\frac12}\left(\int_0^1|U-W|^4dx\right)^{\frac14}\\
\leq&\big{\|}|U|+|W|\big{\|}_{L^4}^2\|U-W\|_{L^4}.
\end{align*}
Since
\begin{align*}
\|U-W\|_{L^4}^4\leq&\|U-W\|_0^3\|\nabla(U-W)\|_0\\
\leq&\left(\frac{\tau}2\|f(U)-f(W)\|_{L^{\frac43}}\|U-W\|_{L^4}\right)^{\frac32}\bigg{(}\frac12\|f(U)-f(W)\|_{L^{\frac43}}\|U-W\|_{L^4}\\
&+\frac{|\lambda|}2\|e^{-\alpha\tau}u_N^{k-1}\|_{L^4}^2\|U-W\|_{L^4}^2\bigg{)}^{\frac12}\\
\leq&\frac14{\tau}^{\frac32}\big{\|}|U|+|W|\big{\|}_{L^4}^3\left(\big{\|}|U|+|W|\big{\|}_{L^4}^2+|\lambda|\|u_N^{k-1}\|_{L^4}^2\right)^{\frac12}\|U-W\|_{L^4}^4\\
\leq&\frac14{\tau}^{\frac32}\left(\big{\|}|U|+|W|\big{\|}_{L^4}^4+|\lambda|\big{\|}|U|+|W|\big{\|}_{L^4}^3\|u_N^{k-1}\|_{L^4}\right)\|U-W\|_{L^4}^4,
\end{align*}
if $U\not=W$, then 
\begin{align*}
1\leq&\frac14{\tau}^{\frac32}\left(\big{\|}|U|+|W|\big{\|}_{L^4}^4+|\lambda|\big{\|}|U|+|W|\big{\|}_{L^4}^3\|u_N^{k-1}\|_{L^4}\right)\\
\leq&C_0\tau^{\frac32}\left(\big{\|}|U|+|W|\big{\|}_{L^4}^4+|\lambda|\big{\|}|U|+|W|\big{\|}_{L^4}^6+|\lambda|\|u_N^{k-1}\|^2_{L^4}\right).
\end{align*}

For cases $\lambda=0$ or $-1$, the $L^4$-norm of the solutions are uniformly bounded. So $C_0\tau^{\frac32}>1$,
which do not hold when $\tau$ is sufficiently small. For case $\lambda=1$, according to the fact that
$$\big{\|}|U|+|W|\big{\|}_{L^4}^6\leq\big{\|}|U|+|W|\big{\|}_0^{\frac32}\big{\|}\nabla(|U|+|W|)\big{\|}_{0}^{\frac92}\leq N^{\frac92}\big{\|}|U|+|W|\big{\|}_0^6,$$
we have $C_0N^{\frac92}\tau^{\frac32}>1$, which is also a contradiction when $\tau$ is sufficiently small.

Thus, the numerical solution for (\ref{BE}) is unique.\qed

\section*{Acknowledgement}

We would like to thank Prof. Zhenxin Liu for useful discussions and comments on proofs of ergodicity of the schemes. Also, we are very grateful to Prof. Arnulf Jentzen and Prof. Xiaojie Wang for their helpful suggestions, which lead to many improvements in this article.

\nocite{*}  
\bibliography{wangxu}

\begin{thebibliography}{10}

\bibitem{abdulle}
A.~Abdulle, G.~Vilmart, and K.~C. Zygalakis.
\newblock High order numerical approximation of the invariant measure of
  ergodic {SDE}s.
\newblock {\em SIAM J. Numer. Anal.}, 52(4):1600--1622, 2014.

\bibitem{akrivis}
G.~D. Akrivis, V.~A. Dougalis, and O.~A. Karakashian.
\newblock On fully discrete {G}alerkin methods of second-order temporal
  accuracy for the nonlinear {S}chr\"odinger equation.
\newblock {\em Numer. Math.}, 59(1):31--53, 1991.

\bibitem{brehier}
C-E. Br{\'e}hier.
\newblock Approximation of the invariant measure with an {E}uler scheme for
  stochastic {PDE}s driven by space-time white noise.
\newblock {\em Potential Anal.}, 40(1):1--40, 2014.

\bibitem{brehier2}
C-E. Br{\'e}hier and M.~Kopec.
\newblock {Approximation of the invariant law of SPDEs: error analysis using a
  poisson equation for full-discretization scheme}.
\newblock {\em arXiv: 1311.7030}.

\bibitem{chenchuchu}
C.~Chen, J.~Hong, and A.~Prohl.
\newblock Convergence of a {$\theta$}-scheme to solve the stochastic nonlinear
  {S}chr\"odinger equation with {S}tratonovich noise.
\newblock {\em Stoch. Partial Differ. Equ. Anal. Comput.}, 4(2):274--318, 2016.

\bibitem{daprato}
G.~Da~Prato.
\newblock {\em An introduction to infinite-dimensional analysis}.
\newblock Universitext. Springer-Verlag, Berlin, 2006.
\newblock Revised and extended from the 2001 original by Da Prato.

\bibitem{debouard}
A.~De~Bouard and A.~Debussche.
\newblock A semi-discrete scheme for the stochastic nonlinear {S}chr\"odinger
  equation.
\newblock {\em Numer. Math.}, 96(4):733--770, 2004.

\bibitem{BD06}
A.~de~Bouard and A.~Debussche.
\newblock Weak and strong order of convergence of a semidiscrete scheme for the
  stochastic nonlinear {S}chr\"odinger equation.
\newblock {\em Appl. Math. Optim.}, 54(3):369--399, 2006.

\bibitem{debussche}
A.~Debussche and C.~Odasso.
\newblock Ergodicity for a weakly damped stochastic non-linear {S}chr\"odinger
  equation.
\newblock {\em J. Evol. Equ.}, 5(3):317--356, 2005.

\bibitem{debussche2}
A.~Debussche and J.~Printems.
\newblock Weak order for the discretization of the stochastic heat equation.
\newblock {\em Math. Comp.}, 78(266):845--863, 2009.

\bibitem{FK04}
G.~Falkovich, I.~Kolokolov, V.~Lebedev, V.~Mezentsev, and S.~Turitsyn.
\newblock Non-{G}aussian error probability in optical soliton transmission.
\newblock {\em Phys. D}, 195(1-2):1--28, 2004.

\bibitem{FK01}
G.~Falkovich, I.~Kolokolov, V.~Lebedev, and S.~Turitsyn.
\newblock Statistics of soliton-bearing systems with additive noise.
\newblock {\em Phys. Rev. E}, 63:025601(R), 2001.

\bibitem{hormander}
L.~H{\"o}rmander.
\newblock Hypoelliptic second order differential equations.
\newblock {\em Acta Math.}, 119:147--171, 1967.

\bibitem{mattingly}
J.~C. Mattingly, A.~M. Stuart, and D.~J. Higham.
\newblock Ergodicity for {SDE}s and approximations: locally {L}ipschitz vector
  fields and degenerate noise.
\newblock {\em Stochastic Process. Appl.}, 101(2):185--232, 2002.

\bibitem{talay}
D.~Talay.
\newblock {Second order discretization schemes of stochastic differential
  systems for the computation of the invariant law}.
\newblock {\em Rapports de Recherche, Institut National de Recherche en
  Informatique et en Automatique}, 1987.

\end{thebibliography}
\bibliographystyle{plain}

\end{document}